\def\d{\mathrm{d}}
\newcommand{\E}{\mathbb{E}}
\newcommand{\R}{\mathbb{R}}
\newcommand{\N}{\mathbb{N}}
\newcommand{\p}{\mathbb{P}}
\newcommand{\id}{\mathds{1}}
\newcommand{\esssup}{\mathrm{ess\mbox{-}sup}}
\newcommand{\essinf}{\mathrm{ess\mbox{-}inf}}
\renewcommand{\ge}{\geqslant}
\renewcommand{\le}{\leqslant}
\renewcommand{\geq}{\geqslant}
\renewcommand{\leq}{\leqslant}
\renewcommand{\epsilon}{\varepsilon}
\theoremstyle{plain}
\newtheorem{theorem}{Theorem}
\newtheorem{corollary}{Corollary}
\newtheorem{lemma}{Lemma}
\newtheorem{proposition}{Proposition}
\theoremstyle{definition}
\newtheorem{definition}{Definition}
\newtheorem{example}{Example}
\theoremstyle{remark}
\theoremstyle{definition}
\renewcommand{\cite}{\citet}
\renewcommand{\cdots}{\dots}
\begin{document}

\title{A General Wasserstein Framework for Data-driven Distributionally Robust Optimization: Tractability and Applications}

 \date{\today}

\author{ Jonathan Yu-Meng Li$^{\dag}$, \,  Tiantian Mao$^{\dag\dag}$\\
\\
$^{\dag}$ Telfer School of Management\\
University of Ottawa, Ottawa, ON, Canada\\
\\
$^{\dag\dag}$ Department of Statistics and Finance, School of Management \\
University of Science and Technology of China\\
 Hefei, Anhui, China}

\maketitle

\begin{abstract}
Data-driven distributionally robust optimization is a recently emerging paradigm aimed at finding a solution that is driven by sample data but is protected against sampling errors. An increasingly popular approach, known as Wasserstein distributionally robust optimization (DRO), achieves this by applying the Wasserstein metric to construct a ball  centred at the empirical distribution and finding a solution that performs well against the most adversarial distribution from the ball. In this paper, we present a general framework for studying different choices of a Wasserstein metric and point out the limitation of the existing choices. In particular, while choosing a Wasserstein metric of a higher order is desirable from a data-driven perspective, given its less conservative nature, such a choice comes with a high price from a robustness perspective - it is no longer applicable to many heavy-tailed distributions of practical concern. We show that this seemingly inevitable trade-off can be resolved by our framework, where a new class of Wasserstein metrics, called coherent Wasserstein metrics, is introduced. Like Wasserstein DRO, distributionally robust optimization using the coherent Wasserstein metrics, termed generalized Wasserstein distributionally robust optimization (GW-DRO), has all the desirable performance guarantees: finite-sample guarantee, asymptotic consistency, and computational tractability. The worst-case expectation problem in GW-DRO is in general a nonconvex optimization problem, yet we provide new analysis to prove its tractability without relying on the common duality scheme. Our framework, as shown in this paper, offers a fruitful opportunity to design novel Wasserstein DRO models that can be applied in various contexts such as operations management, finance, and machine learning.
\end{abstract}

\section{Introduction}
Data-driven problems arise from many operations research and machine learning applications where a stochastic optimization problem needs to be solved using sample data drawn from a probability distribution of interest. The goal is to find a solution that performs well in out-of-sample tests against the distribution underlying the stochastic optimization problem. These problems are challenging to solve, because firstly the use of sample data to represent a distribution is prone to sampling errors, and secondly the underlying data-generating distribution in most real-life applications is fundamentally unknown. The increasing availability of large data sets in recent years has renewed the interest of exploring how to best exploit sample data to obtain solutions with favourable out-of-sample performances. One prominent idea is to find a solution that can perform well against distributions that are, in some sense, close to the empirical distribution constructed from the sample data. This, in principle, allows the solution to maximally leverage the information contained in sample data regarding the underlying data-generating distribution while at the same time ensuring the solution does not overly rely on sample data, i.e. avoids overfitting.

An increasingly popular framework to implement this idea is data-driven distributionally robust optimization (DD-DRO). It seeks a solution that performs the best with respect to the most adversarial distribution from a set of distributions, known as ambiguity set, that are close to the empirical distribution according to some predefined metric. The choice of a proper metric is crucial in DD-DRO. Ambiguity sets constructed from different metrics could contain distributions with distinctly different structural properties.  A well-known example is ambiguity sets defined based on the Kullback-Leibler divergence metrics (\cite{KL51}), which contain only discrete distributions whose support is limited to, i.e. a subset of, the support of the empirical distribution (\cite{BDDM13}, \cite{HH13}, \cite{S17}). Solutions of DD-DRO that adopts such ambiguity sets may not generalize well to situations where the underlying distribution has a more general support structure, e.g. taking values other than the observed samples. Another metric that has now been more widely applied in DD-DRO is the Wasserstein metric (\cite{KR58}). Ambiguity sets defined based on the metric contain distributions with a more general distributional structure, including both discrete and continuous distributions. DD-DRO adopting such ambiguity sets, also known as Wasserstein distributionally robust optimization (\cite{EK18}), is attractive in that its solution has potential to generalize well against various forms of distributions that may arise from practical applications. The Wasserstein metric consists of  a family of metrics in different orders, namely the type-$p$ Wasserstein metric, $p \in [1,\infty]$, each defined based on a transportation cost function with a different power order. The type-1 Wasserstein metric, extensively studied in \cite{EK18}, is so far the most popular choice in the applications of Wasserstein DRO. \cite{KENS97} and \cite{GK22} provide a comprehensive study of Wasserstein DRO for the type-$p$ Wasserstein metric of a higher order, $p >1$. Several other works (\cite{BKM19}, \cite{GCK20}, \cite{SKE19}, \cite{SNVD20}, \cite{CBM18}) have explored its applications in machine learning and related data-driven problems. It is known that an ambiguity set defined based on the type-$p$ Wasserstein metric, called type-$p$ Wasserstein ball, would contain only distributions that have finite $p^{{\rm th}}$-order moments (\cite{V08} p.95). With the same radius, a type-$p$ Wasserstein ball is strictly smaller than a type-$q$ Wasserstein ball, where $p>q$, and contains distributions that concentrate more heavily around the sample data. A Wasserstein ball of a higher order is thus less conservative or can be considered more data-driven. The type-$\infty$ Wasserstein ball, in particular, is the most data-driven in that the distributions from the ball concentrate fully in a bounded neighbourhood of sample data.

One can see that when it comes to the choice of a Wasserstein metric, a metric with a lower order would appeal to those who seek an ambiguity set that offers protection against a more adversarial form of distributions, whereas a metric with a higher order would appeal to those who seek an ambiguity set that better exploits the sample data. In many real-life applications, however, a pursuit of both, i.e. exploiting well the information from the sample data yet without dismissing the possibility that the underlying distribution may take an extreme form, is of necessity. For instance, in the context of financial portfolio management, a portfolio needs to be optimized using much of the information from market data but without dismissing the possibility that the return distribution may have a heavy tail, i.e. non-negligible weights on rarely occurring events. We point out first in this paper that the Wasserstein metric as it stands, i.e. the family of type-$p$ Wasserstein metrics, $p \in [1,\infty]$, cannot accommodate this simultaneous pursuit.  This is because ambiguity sets constructed from a  Wasserstein metric with a higher order, e.g. $p\geq 2$, inevitably exclude heavy-tailed distributions\footnote{In this paper, heavy-tailed distributions refer to distributions with finite mean but without finite variance.} (\cite{BAE22}, \cite{DF06}).  One may view this as a trade-off between the pursuit of robustness and data-drivenness when it comes to the choice of a Wasserstein metric. While this trade-off may appear inevitable, i.e. a less conservative choice would dismiss any heavy-tailed distribution, the primary goal of this paper is to present alternative families of Wasserstein metrics that could resolve, or at least lessen, this trade-off. We present a general framework that formalizes this simultaneous pursuit of robustness and data-drivenness in terms of the choice of a Wasserstein metric, and identify a large class of Wasserstein metrics, termed coherent Wasserstein metrics, that allows for exploring this pursuit.

The class of coherent Wasserstein metrics is motivated by the attempt to generalize the type-1 and the type-$\infty$ Wasserstein metric  from a new perspective.  An observation can be made that the type-$\infty$ Wasserstein metric can be viewed as a risk-averse counterpart of the type-1 Wasserstein metric, which replaces the expectation operator of the latter (see \eqref{eq:dwasser} in Section \ref{sec:2}) with the worst-case risk measure, i.e. $\esssup$, to summarize a transportation cost distribution (\cite{KR58}, \cite{RR98}, \cite{V08}). Coherent Wasserstein metrics generalize this observation by adopting a general class of risk measures, namely coherent risk measures (\cite{ADEH99}, \cite{D02}), to summarize a transportation cost distribution, which consists of the expectation and the worst-case risk measure as special cases. Coherent Wasserstein metrics can be interpreted also as general risk-averse formulations of the optimal transport problem arising from the classical definition of the Wasserstein metric (\cite{V08}).   We show that coherent Wasserstein metrics provide a powerful means to reconcile the type-1 and the type-$\infty$ Wasserstein metrics  and offer the opportunity to identify new families of Wasserstein metrics motivated by the popularity of risk measures such as Conditional value-at-risk (CVaR) (\cite{AT02}, \cite{KSZ19}, \cite{EPRWB14}) and expectiles (\cite{BKMG14}, \cite{BB17}, \cite{G11}). Like the Wasserstein metric, coherent Wasserstein metrics are theoretically sound in that they satisfy all necessary properties of a distance metric. We call the resulting DD-DRO formulation generalized Wasserstein distributionally robust optimization (GW-DRO) and show that GW-DRO generally satisfies the desirable conditions posed for DD-DRO, namely finite-sample guarantee, asymptotic consistency, and computational tractability (see Section \ref{sdddro} for detailed definitions).

From an optimization perspective, GW-DRO represents a new class of DRO problems that are distinctly different from existing DRO problems in two aspects. Firstly, the worst-case expectation problem embedded in GW-DRO has a nonlinear constraint in distribution, whereas existing worst-case expectation problems in DRO such as Wasserstein DRO generally have linear constraints in distributions, taking the form of moment constraints. Secondly, as shown in this paper, the worst-case expectation problem of GW-DRO is in general a nonconvex optimization problem in distribution, which to the best of our knowledge has not been studied in the DRO literature. These differences are significant, which render existing DRO analysis no longer applicable to studying the tractability of GW-DRO.
In this paper, we take a different approach to studying the tractability of the worst-case expectation problems. Instead of relying on the common analysis starting from a dual problem formulation, we tackle directly the worst-case expectation problem from a primal perspective, and show how it can be reduced to a finite-dimensional optimization problem. Leveraging this finite-dimensional result, we then show how GW-DRO problems can be tractably solved as convex programs. Our approach of tackling the primal problem first and then deriving more tractable formulations is novel, which opens the door for solving a more general class of DRO problems. As a by-product, it provides an alternative, possibly simpler, way to derive the tractable formulation for Wasserstein DRO.

 In addition to general tractability results, we provide also in-depth analysis of GW-DRO by focusing on two important instances of Wasserstein balls, defined by CVaR- and expectile-based Wasserstein metrics. We show that the worst-case expectation problems in these instances can be solved in closed-form when the loss function is convex Lipschitz continuous and the support set is unconstrained. These closed-form solutions are highly interpretable and structurally comparable to the solutions for the type-1 Wasserstein DRO. In particular, we show that the solutions for the case of expectile-based Wasserstein ball, CVaR-based Wasserstein ball, and type-1 Wasserstein ball are closely related in that they exhibit an ``inclusion" relationship with the first being most general. This applies also to their respective worst-case distributions, with the first having the most flexible, or the richest, worst-case distribution structure. The closed-form solutions, when applied to contexts such as machine learning, could be interpreted also from a regularization perspective (c.f. \cite{KENS97}). For instance, GW-DRO in these applications, when adopting CVaR-based Wasserstein balls, boils down to aggregating different regularized empirical minimization problems into a single minimization problem, and thus could be viewed as an ensemble of regularized models.

In the following, we summarize the key contributions of this paper.
\begin{enumerate}
\item We propose, in the spirit of data-driven distributionally robust optimization, a theoretically sound framework for studying different families of Wasserstein metrics. The framework sheds light on the potential limitation of the existing family of Wasserstein metrics and offers guidance to discover new families of Wasserstein metrics better suited for designing a richer, yet not only conservative, form of ambiguity sets.

\item We introduce a new family of coherent Wasserstein metrics and show that the corresponding distributionally robust optimization models, i.e. GW-DRO, enjoy all the desirable properties of a data-driven model for solving a stochastic program, namely finite-sample guarantee, asymptotic consistency, and tractability.

\item We provide a new systematic approach to studying the tractability of distributionally robust optimization problems without relying on the common duality scheme. It allows for tackling non-convex worst-case expectation problems naturally arising from GW-DRO and proving their tractability with the discovery of hidden convexity.

\item We present the application of GW-DRO to operations, finance, and machine learning problems. In particular, we show that in many of these problems a deep connection can be drawn between Wasserstein DRO and GW-DRO. Most notably, while the former is known to have a regularization interpretation in applications such as machine learning, the latter offers an even richer, and more novel, regularization interpretation.
\end{enumerate}

\section{Wasserstein data-driven distributionally robust optimization}\label{sec:2}

As the basic setup, we denote a decision vector by $x \in \mathbb{R}^{n}$, a random vector of interest by $\xi \sim \mathbb{P}$, supported on a convex set $\Xi \subseteq \mathbb{R}^{m}$, i.e. $\mathbb{P}(\xi \in \Xi)=1$,  and a loss function $h: \mathbb{R}^{n} \times \mathbb{R}^{m}   \rightarrow \mathbb{R}$
by $h(x,\xi)$, which depends on a made decision $x$ and the realization of the random vector $\xi$.  In many practical problems of interest, one seeks to find a decision $x$ that minimizes the expected loss $\mathbb{E}^{\mathbb{P}}[h(x, \xi)]$, i.e. solving
\begin{equation}\label{eq-220324-2}
J^{\star}:=\inf _{x \in \mathbb{X}}\left\{\mathbb{E}^{\mathbb{P}}[h(x, \xi)]=\int_{\Xi} h(x, \xi) \mathbb{P}(\mathrm{d} \xi)\right\},
\end{equation}
where $\mathbb{X}$ denotes a feasible set of solutions.

Data-driven optimization refers to finding a solution to the above problem when the distribution $\mathbb{P}$ can only be partially observed through a finite set of data $\widehat{\xi_1},..., \widehat{\xi}_N$ sampled independently from the distribution. One common data-driven method is to directly replace the distribution $\mathbb{P}$ with the empirical distribution
$\widehat{\mathbb{P}}_{N}:=\frac{1}{N} \sum_{i=1}^{N} \delta_{ \widehat{\xi_i} }$ and solve instead the following optimization problem
\begin{equation}\label{eq-220324-3}
\widehat{J}_{{\rm SAA}}:=\inf _{x \in \mathbb{X}}\left\{\mathbb{E}^{\widehat{\mathbb{P}}_{N}}[h(x, \xi)]=\frac{1}{N}\sum_{i=1}^N h(x, \widehat{\xi}_i)\right\}.
\end{equation}
This method, also known as sample average approximation (SAA), is susceptible to sampling errors and suffers from the issue of the optimizer's curse (bias), i.e. disappointing out-of-sample performances (\cite{KENS97}). As a remedy, data-driven distributionally robust optimization (DD-DRO) was proposed as a new data-driven method, which offers a solution that mitigates the adverse impact of sampling errors by solving the following minimax optimization problem
\begin{equation}\label{eq:wcprob}
\widehat{J}_{N}:=\inf _{x \in \mathbb{X}} \sup _{\mathbb{P} \in \mathbb{B}(\widehat{\mathbb{P}}_{N} )} \mathbb{E}^{\mathbb{P}}[h(x, \xi)].
\end{equation}
The set $\mathbb{B}(\widehat{\mathbb{P}}_{N})$, known as ambiguity set, is a set constructed based on the empirical distribution $\widehat{\mathbb{P}}_{N}$, which contains the unknown distribution $\mathbb{P}$ in \eqref{eq-220324-2} with high probability. A solution generated from \eqref{eq:wcprob} is robust against sampling errors in that it is guaranteed to perform the best with respect to the most adversarial distribution from the set $\mathbb{B}(\widehat{\mathbb{P}}_{N})$.
A natural construction of the set $\mathbb{B}(\widehat{\mathbb{P}}_{N})$ takes the general form of
\begin{equation} \label{general_ball}
\mathbb{B}_{\epsilon}^{d}\left(\widehat{\mathbb{P}}_{N}\right):=\left\{\mathbb{P} \in \mathcal{M}(\Xi): d\left(\widehat{\mathbb{P}}_{N}, \mathbb{P}\right) \leq \varepsilon\right\},
\end{equation}
where  $\mathcal M(\Xi)$ denotes the set of all distributions supported on $\Xi$, $d(\mathbb{P}_1, \mathbb{P}_2)$ stands for a probability metric that measures the distance between any two distributions $\mathbb{P}_1, \mathbb{P}_2 \in \mathcal{M}(\Xi)$, and $\varepsilon$ refers to the radius of the ball centred at the empirical distribution $\widehat{\mathbb{P}}_{N}$.

The quality of the solution generated from \eqref{eq:wcprob} depends critically on the structure of the ambiguity set $\mathbb{B}_{\epsilon}^{d}(\widehat{\mathbb{P}}_{N})$, which in turn depends on the choice of the probability metric $d$. Among several proposed probability metrics, the (type-1) Wasserstein metric (\cite{KR58})
\begin{align} \label{eq:dwasser}
d_{\mathrm{W}}\left(\mathbb{P}_{1}, \mathbb{P}_{2}\right):=\inf \left\{\E^{\Pi}\left[\left\|\xi_{1}-\xi_{2}\right\| \right]\left|\begin{array}{l} \Pi \text { is a joint distribution of } \xi_{1} \text { and } \xi_{2} \\ \text { with marginals } \mathbb{P}_{1} \text { and } \mathbb{P}_{2}, \text { respectively }\end{array}\right.\right\},
\end{align}
has stood out as a popular choice, given its applicability to a large class of distributions, i.e. any distributions
$\mathbb{P}_{1}, \mathbb{P}_{2} \in \mathcal{M}(\Xi)$ that have finite first moments. In particular, it allows for constructing a ball
$$
\mathbb{B}_{\varepsilon}^{\mathrm{W}} \left(\widehat{\mathbb{P}}_{N}\right):=\left\{\mathbb{P} \in \mathcal{M}(\Xi): d_{\mathrm{W}}\left(\widehat{\mathbb{P}}_{N}, \mathbb{P}\right) \leq \varepsilon\right\}
$$
that contains a rich set of distributions.


The ball $\mathbb{B}_{\varepsilon}^{\mathrm{W}} (\widehat{\mathbb{P}}_{N})$ is advantageous from a robustness perspective, i.e. containing various forms of distributions, but its flip side is less mentioned in the literature of Wasserstein DRO. Namely, it may contain overly-disperse distributions that differ too noticeably from the empirical distribution and thus be considered overly-conservative. As a useful contrast to highlight the limitation of the ball constructed from the type-1 Wasserstein metric, let us consider the following variant of Wasserstein metric, known as the type-$\infty$ Wasserstein metric:
\begin{align} \label{eq:dw}
d_{\infty}\left(\mathbb{P}_{1}, \mathbb{P}_{2}\right)
:=\inf \left\{\esssup^\Pi \left\|\xi_{1}-\xi_{2}\right\| \left|\begin{array}{l} \Pi \text { is a joint distribution of } \xi_{1} \text { and } \xi_{2} \\ \text { with marginals } \mathbb{P}_{1} \text { and } \mathbb{P}_{2}, \text { respectively }\end{array}\right.\right\}.
\end{align}
Its induced ball
$$
\mathbb{B}_{\varepsilon}^{{\rm wc}}\left(\widehat{\mathbb{P}}_{N}\right):=\left\{\mathbb{P} \in \mathcal{M}(\Xi): d_{\infty}\left(\widehat{\mathbb{P}}_{N}, \mathbb{P}\right) \leq \varepsilon\right\}
$$
would contain only distributions that fully concentrate in a neighbourhood of samples $\widehat{\xi_1},..., \widehat{\xi}_N$, bounded by $\varepsilon$, and thus resemble to a greater extent the empirical distribution. The ball constructed from the type-$\infty$ Wasserstein metric thus has the merit of data-drivenness. The price to pay to adopt the type-$\infty$ Wasserstein metric is high, nonetheless, from a robustness perspective, as the metric is only applicable to distributions with bounded support.


One can see that the two Wasserstein metrics $d_{\mathrm{W}}$ and $d_{\infty}$ essentially differ in how they summarize the distribution of $\left\|\xi_{1}-\xi_{2}\right\|$. To formalize this point, we call a random variable $X$ a transportation cost random variable from $\mathbb{P}_1$ to $\mathbb{P}_2$ if  there exist $ {\xi}_1 \sim \mathbb{P}_1,\;\; \xi_2 \sim \mathbb{P}_2$ such that $X\stackrel{\rm d} = \|\xi_1-\xi_2\|$. Let $\rho$ denote a real-valued function that maps a random variable $X$ to a real value. In the case of type-1 Wasserstein metric $d_{\mathrm{W}}$, we have $\rho:= \mathbb{E}$, whereas in the case of type-$\infty$ Wasserstein metric we have $\rho:= \esssup$. The type-1 Wasserstein metric $d_{\mathrm{W}}$ could induce an overly-conservative ball $\mathbb{B}_{\varepsilon}^{\mathrm{W}}(\widehat{\mathbb{P}}_{N})$, because the expectation $\mathbb{E}$ is  indistinguishable for deviations of $X$ at different quantiles, whereas the type-$\infty$ Wasserstein metric induces a ball $\mathbb{B}_{\varepsilon}^{{\rm wc}}(\widehat{\mathbb{P}}_{N})$ that can contain only distributions with bounded support, because $\textrm{esssup}$, as the worst-case risk measure, is the strongest tail measure.

Taking this perspective, we seek to identify in this paper a new class of Wasserstein metrics that can reconcile the type-1 and type-$\infty$ Wasserstein metrics so that these metrics can be well justified from both robustness and data-drivenness perspective. We formalize this pursuit in the next section, where a new class of Wasserstein metrics, called coherent Wasserstein metrics, will be introduced.

\subsection{Coherent Wasserstein metrics} \label{cwm}
We begin by defining $\{\rho_{\alpha}\}_{\alpha\in A}$ as a class of real-valued functions used to summarize the distribution of a transportation cost random variable, where $A$ is an index set. The induced Wasserstein distance between two distributions $\mathbb{P}_{1} $ and $\mathbb{P}_{2}$ is defined as
 \begin{equation}\label{eq:distance}
d_{\rho_{\alpha}}(\mathbb{P}_1,\mathbb{P}_2) :=\inf \left\{\rho_\alpha^{\Pi}(\|\xi_{1} - \xi_{2}\|)\left|\begin{array}{l} \Pi \text { is a joint distribution of } \xi_{1} \text { and } \xi_{2} \\ \text { with marginals } \mathbb{P}_{1} \text { and } \mathbb{P}_{2}, \text { respectively }\end{array}\right.\right\},
\end{equation}
and a ball of radius $\varepsilon$ centred at the  empirical distribution $\widehat{\mathbb{P}}_{N}$ can be defined accordingly as
$$
\mathbb{B}^{\rho_\alpha}_{\varepsilon}\left(\widehat{\mathbb{P}}_{N}\right):=\left\{\mathbb{P} \in \mathcal{M}(\Xi): d_{\rho_{\alpha}}\left(\widehat{\mathbb{P}}_{N}, \mathbb{P}\right) \leq \varepsilon\right\}.
$$

The novelty of our framework lies in taking a set perspective, i.e. $\alpha \in A$, to study properties that a whole family of Wasserstein metrics $\{d_{\rho_{\alpha}}\}_{\alpha \in A}$ should satisfy, rather than considering each metric separately. This perspective, which is largely missing in the literature of Wasserstein distributionally robust optimization is essential, we believe, when it comes to studying the choice of a Wasserstein metric.  Built upon the observation made about the type-1 and type-$\infty$ Wasserstein metrics $d_{\mathrm{W}}$ and $d_{\infty}$, namely that the former is advantageous from a robustness perspective whereas the latter is advantageous from a data-driven perspective, we define the following two desirable properties for a family of metrics $\{d_{\rho_{\alpha}}\}_{\alpha \in A}$. These two properties capture the simultaneous pursuit of robustness and data-drivenness underlying the philosophy of data-driven distributionally robust optimization.

 \begin{itemize}
 \item [(i)]{\bf (Robustness)} A family of metrics $\{d_{\rho_{\alpha}}\}_{\alpha \in A}$ is said to have the property of robustness if for each $\alpha\in A$, $\rho_\alpha$ is well-defined (takes finite value) for any transportation cost random variable $X$ that has finite first moment, i.e. $L^1$ random variables. {Any   distribution with finite mean is contained in a Wasserstein ball $\mathbb{B}^{\rho_\alpha}_{\varepsilon} (\widehat{\mathbb{P}}_{N} )$ for some $\varepsilon >0$.}

\item [(ii)] {\bf (Data-drivenness)}  A family of metrics $\{d_{\rho_{\alpha}}\}_{\alpha \in A}$ is said to have the property of data-drivenness if there exists a sequence of indices $\alpha_n\in A$, $n\in\N$, such that  $\rho_{\alpha_n}$  converges to the worst-case risk measure $\esssup$. The Wasserstein ball $\mathbb{B}^{\rho_{\alpha_n}}_{\varepsilon}(\widehat{\mathbb{P}}_{N} )$ converges to $\mathbb{B}_{\varepsilon}^{{\rm wc}}(\widehat{\mathbb{P}}_{N} )$, as $n \rightarrow \infty$.
 \end{itemize}

These two properties together ensure that a family of metrics $\{d_{\rho_{\alpha}}\}_{\alpha \in A}$ is rich enough to, on the one hand, accommodate distributions with a more adversarial form, e.g. heavy-tailed distributions, like the type-1 Wasserstein metric, and on the other hand be used to approximate the functionality of the type-$\infty$ Wasserstein metric. Clearly, the singleton $\{d_{\mathrm{W}}\}$ satisfies robustness but not data-drivenness, whereas the singleton $\{d_{\infty}\}$ satisfies data-drivenness but not robustness.

\begin{definition}
We call a family of metrics $\{d_{\rho_{\alpha}}\}_{\alpha \in A}$ data-driven distributionally robust Wasserstein {\bf (DD-DRW)} metrics if they satisfy both the properties of robustness and data-drivenness.
\end{definition}

When $\rho_p(X)=\E[X^p]^{1/p}$, $p\in [1,\infty)$,  the induced distance is the Wasserstein metric of order $p$. It is clear that the family $\{d_{\rho_{p}}\}_{p \in [1,\infty)}$ is not {\bf DD-DRW}, because it satisfies data-drivenness, i.e.
$\rho_p(X)$ converges to $\esssup (X)$ as $p\to \infty$ but not robustness, i.e. the ambiguity set $\mathbb{B}^{\rho_\alpha}_{\varepsilon} (\widehat{\mathbb{P}}_{N} )$ fails to account for heavy-tailed distributions for some $p>1$. This points out the potential limitation of applying the family of $p^{{\rm th}}$-order Wasserstein metrics. Namely, the price that needs to be paid to construct a less conservative ambiguity set $\mathbb{B}^{\rho_\alpha}_{\varepsilon} (\widehat{\mathbb{P}}_{N} )$  is high from a distributionally robust perspective -- one has to forgo any heavy-tailed distribution of practical interest.

It is natural to wonder if the limitation of the family $\{d_{\rho_{p}}\}_{p \in [1,\infty)}$ lies in its use of $p^{{\rm th}}$-order power function. We show below that the limitation comes more fundamentally from the use of expected functionals to summarize the transportation cost random variable $X$.

\begin{proposition} \label{pro_eu}
The family of metrics $\{d_{\rho_{\alpha}}\}_{\alpha \in A}$, where $\rho_\alpha(X)= \ell^{-1}_\alpha(\E[\ell_\alpha(X)])$\footnote{For a non-decreasing function $\ell$, its inverse function is defined as  $\ell^{-1}(x) =\inf\{y: \ell(y)\ge x\}$.}, $\alpha\in A$ and $\ell_\alpha$ is increasing convex function and $\ell_\alpha(0)=0$, $\alpha\in A$, is not {\bf DD-DRW}.
\end{proposition}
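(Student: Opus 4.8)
The plan is a proof by contradiction: assume $\{d_{\rho_\alpha}\}_{\alpha\in A}$ is \textbf{DD-DRW} and play the two defining requirements against each other. The cleanest part is extracting from \emph{robustness} that every $\ell_\alpha$ grows at most linearly. Fix $\alpha$. Since $\ell_\alpha$ is convex with $\ell_\alpha(0)=0$, the ratio $x\mapsto\ell_\alpha(x)/x$ is non-decreasing on $(0,\infty)$, so $L_\alpha:=\lim_{x\to\infty}\ell_\alpha(x)/x$ exists in $(0,\infty]$. If $L_\alpha=\infty$, choose $x_k\uparrow\infty$ with $\ell_\alpha(x_k)\ge 4^k x_k$ and let $X$ be the discrete variable with $\p(X=x_k)=c\,2^{-k}/x_k$ ($c$ a normalizing constant). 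Then $\E[X]=c\sum_k 2^{-k}<\infty$, so $X$ is a legitimate $L^1$ transportation cost, yet $\E[\ell_\alpha(X)]=c\sum_k 2^{-k}\ell_\alpha(x_k)/x_k\ge c\sum_k 2^k=\infty$, whence $\rho_\alpha(X)=\ell_\alpha^{-1}(\infty)=+\infty$, contradicting robustness. Hence $L_\alpha<\infty$ and $\ell_\alpha(x)\le L_\alpha x$ for all $x\ge 0$, for every $\alpha$; this already re-derives the known failure of the higher-order families $\{d_{\rho_p}\}$, $\ell_p(x)=x^p$, $p>1$, and shows the obstruction is not tied to the power form.

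Next I would translate \emph{data-drivenness} into a shape statement. Take a sequence $\alpha_n$ with $\rho_{\alpha_n}\to\esssup$ and normalize $\ell_{\alpha_n}(1)=1$ (the alternative $\ell_{\alpha_n}(1)=0$ makes $\ell_{\alpha_n}\equiv 0$ on $[0,1]$, so $\rho_{\alpha_n}$ vanishes on every $[0,1]$-valued law, contradicting $\rho_{\alpha_n}(\tfrac12\delta_0+\tfrac12\delta_{1/2})\to\tfrac12$). Evaluating $\rho_{\alpha_n}$ on the two-point laws $(1-q)\delta_a+q\delta_b$ with $0<a<b$, the inequality $\rho_{\alpha_n}\big((1-q)\delta_a+q\delta_b\big)=\ell_{\alpha_n}^{-1}\!\big((1-q)\ell_{\alpha_n}(a)+q\ell_{\alpha_n}(b)\big)\le b$ together with the required limit $b=\esssup$ for \emph{every} $q\in(0,1)$ forces $\ell_{\alpha_n}(a)/\ell_{\alpha_n}(b)\to 0$ for all $0<a<b$; equivalently, the normalized profiles degenerate to the indicator of $[1,\infty)$, with $\ell_{\alpha_n}(a)\to 0$ for $a<1$ and $\ell_{\alpha_n}(b)\to\infty$ for $b>1$.

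The decisive step, and the one I expect to be the main obstacle, is extracting a contradiction from ``$\ell_{\alpha_n}$ at most linear'' plus ``profiles degenerate.'' As bare growth statements these do not yet conflict (one can truncate $x^n$ to be linear past $\sqrt n$ and keep both), so the argument must use that each $d_{\rho_\alpha}$ is a genuine metric with the correct scaling — this is what the framework buys us beyond an ad hoc $\rho_\alpha$. The triangle inequality, via the gluing lemma applied to three collinear Dirac masses, forces $\rho_\alpha$ to be subadditive, $\rho_\alpha(Y+Z)\le\rho_\alpha(Y)+\rho_\alpha(Z)$; and the scaling $d_{\rho_\alpha}(\delta_0,\delta_{\lambda t})=\lambda\,d_{\rho_\alpha}(\delta_0,\delta_t)$ demanded of a transportation-cost metric forces $\rho_\alpha$ to be positively homogeneous. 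For $\rho_\alpha=\ell_\alpha^{-1}\circ\E\circ\ell_\alpha$, testing positive homogeneity on two-point laws shows $\ell_\alpha(\lambda x)/\ell_\alpha(x)$ depends on $\lambda$ alone, hence $\ell_\alpha(x)=c_\alpha x^{p_\alpha}$; convexity gives $p_\alpha\ge 1$ and the first step gives $p_\alpha\le 1$, so $p_\alpha=1$ and $\rho_\alpha=\E$ for every $\alpha$. Thus $d_{\rho_\alpha}=d_{\mathrm W}$ throughout, and $\E$ cannot converge to $\esssup$ — already $\E(\tfrac12\delta_0+\tfrac12\delta_1)=\tfrac12\ne 1$ — contradicting data-drivenness. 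The substantive content of the proof is therefore this reduction, under the metric/normalization constraints, of any expected functional $\ell_\alpha^{-1}\circ\E\circ\ell_\alpha$ to a power mean: that collapse, rather than the elementary growth estimate, is where the ``fundamental'' limitation of expected functionals the text refers to actually resides, and it is the step I would spend the most care on.
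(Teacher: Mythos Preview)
Your first two steps are fine and essentially mirror the paper. The proof breaks down at what you yourself call the decisive step: the reduction to power means via positive homogeneity and subadditivity.

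The problem is that neither of these properties is available. Testing the triangle inequality or a scaling identity on \emph{Dirac masses} yields nothing: for any $\rho_\alpha$ of the form $\ell_\alpha^{-1}\circ\E\circ\ell_\alpha$ with $\ell_\alpha$ increasing and $\ell_\alpha(0)=0$, one has $\rho_\alpha(c)=c$ on constants, so $d_{\rho_\alpha}(\delta_0,\delta_{\lambda t})=\|\lambda t\|=\lambda\|t\|=\lambda d_{\rho_\alpha}(\delta_0,\delta_t)$ and $d_{\rho_\alpha}(\delta_0,\delta_{y+z})=y+z=d_{\rho_\alpha}(\delta_0,\delta_y)+d_{\rho_\alpha}(\delta_y,\delta_{y+z})$ hold \emph{identically}, regardless of $\ell_\alpha$. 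These identities therefore impose no constraint whatsoever on $\rho_\alpha$ applied to non-degenerate laws, and in particular do not force $\rho_\alpha(\lambda X)=\lambda\rho_\alpha(X)$ or $\rho_\alpha(Y+Z)\le\rho_\alpha(Y)+\rho_\alpha(Z)$. Nor does the proposition assume any scaling axiom beyond the bare definition $d_{\rho_\alpha}(\mathbb P_1,\mathbb P_2)=\inf_\Pi\rho_\alpha^\Pi(\|\xi_1-\xi_2\|)$; importing such an axiom would be assuming what you want to conclude. So the collapse $\ell_\alpha(x)=c_\alpha x^{p_\alpha}$ does not follow, and the argument does not close.

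The paper's route is different and does not pass through homogeneity at all. After your steps 1 and 2 (Lipschitzness of each $\ell_\alpha$; along a subsequence $\beta_n$, $\ell_{\beta_n}\to\infty\cdot 1_{\{(\cdot)>c\}}$ or $\infty\cdot 1_{\{(\cdot)\ge c\}}$ for some threshold $c$), the paper tests directly on the two-point law $X$ with $\p(X=d)=p$, $\p(X=0)=1-p$ for a fixed $d>c$, and argues that
\[
\rho_{\beta_n}(X)=\ell_{\beta_n}^{-1}\bigl(p\,\ell_{\beta_n}(d)\bigr)\longrightarrow c<d=\esssup X,
\]
contradicting $\rho_{\beta_n}\to\esssup$. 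In other words, the paper claims that your two ``bare growth statements'' \emph{do} conflict once one tracks the inverse carefully at the degeneration threshold, so no extra coherence-type structure is needed. Your instinct that something more is required sent you looking for axioms the proposition does not supply; the intended argument instead exploits the shape information you already extracted in step~2.
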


We now introduce a new class of Wasserstein metrics, called coherent Wasserstein metrics, that generalize the type-1 and type-$\infty$ Wasserstein metrics from a risk measure perspective.
\begin{definition} (Coherent Wasserstein metrics)
A metric $d_{\rho}(\cdot,\cdot):\mathcal M^2\to\R_+$ is called a coherent Wasserstein metric if it takes the form of
\begin{equation}\label{eq:dis1}
d_{\rho}( \mathbb{P}_{1}, \mathbb{P}_{2}) :=\inf \left\{\rho^{\Pi}(\|\xi_{1} - \xi_{2}\|)\left|\begin{array}{l} \Pi \text { is a joint distribution of } \xi_{1} \text { and } \xi_{2} \\ \text { with marginals } \mathbb{P}_{1} \text { and } \mathbb{P}_{2}, \text { respectively }\end{array}\right.\right\},
\end{equation}
where $\rho$ is a law-invariant coherent risk measure, i.e. satisfying $\rho(0)=0$ and the following properties:

(translation invariance) $\rho(X+c) = \rho(X)+c$ for any $c\geq 0$,

(monotonicity) $\rho(X_1) \geq \rho(X_2)$ for any $X_1 \geq X_2$,

(subadditvity) $\rho(X_1 + X_2) \leq \rho(X_1) + \rho(X_2)$,

(positive homogeneity) $\rho(cX)=c\rho(X)$ for any $c>0$,

(law invariance) $\rho(X_1)=\rho(X_2)$ for any $X_1 \stackrel{\rm d} = X_2$.

\end{definition}
The use of a law-invariant coherent risk measure $\rho$ is motivated by its well-established properties in the literature of risk measures (\cite{ADEH99}, \cite{K01}) and that it naturally includes the expectation $\mathbb{E}$ and the worst-case measure $\esssup$, as limiting cases. Coherent Wasserstein metrics can be viewed as natural risk-averse formulations of the classical optimal transport problem (\cite{V08}).  We show firstly that coherent Wasserstein metrics, like the classical Wasserstein metrics, are valid distance metrics.

\begin{proposition}\label{pr:1-sec1}
Any coherent Wasserstein metric $d_{\rho}(\cdot,\cdot):\mathcal M(\Xi)\times \mathcal M(\Xi)\to \R_+$ satisfies the following properties of a distance metric
\begin{itemize}
\item [(i)]  (Identity of indiscernibles) $d_{\rho}(\mathbb{P}_{1}, \mathbb{P}_{2})=0$ if and only if $\mathbb{P}_{1}= \mathbb{P}_{2}$.
\item [(ii)] (Symmetry) $d_{\rho}(\mathbb{P}_{1}, \mathbb{P}_{2})=d_\alpha(\mathbb{P}_{2}, \mathbb{P}_{1})$ for any $\mathbb{P}_{1}, \mathbb{P}_{2}$.
\item [(iii)] (Triangle inequality) $d_{\rho}\left(\mathbb{P}_{1}, \mathbb{P}_{2} \right) + d_{\rho}\left(\mathbb{P}_{2}, \mathbb{P}_{3} \right) \ge  d_{\rho}\left(\mathbb{P}_{1},  \mathbb{P}_{3} \right)$ for any $\mathbb{P}_{1}$, $\mathbb{P}_{2}$,  $\mathbb{P}_{3}$.
\item [(iv)] (Non-negativity) $d_{\rho}(\mathbb{P}_{1}, \mathbb{P}_{2})\ge 0$ for any $\mathbb{P}_{1}, \mathbb{P}_{2}$.
\end{itemize}
\end{proposition}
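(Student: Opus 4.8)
The plan is to verify the four metric axioms one by one, using the coherence axioms of $\rho$ together with standard coupling (glueing) constructions from optimal transport. Non-negativity (iv) is immediate: since $\|\xi_1-\xi_2\|\ge 0$ and $\rho(0)=0$, monotonicity of $\rho$ gives $\rho^\Pi(\|\xi_1-\xi_2\|)\ge\rho(0)=0$ for every admissible coupling $\Pi$, hence the infimum is $\ge 0$. For symmetry (ii), note that any coupling $\Pi$ of $(\xi_1,\xi_2)$ induces, by swapping coordinates, a coupling $\widetilde\Pi$ of $(\xi_2,\xi_1)$ under which $\|\xi_2-\xi_1\|$ has the same law as $\|\xi_1-\xi_2\|$ under $\Pi$; by law invariance of $\rho$ the two objective values coincide, so the two infima are equal. (The statement as written has a typo, $d_\alpha$ on the right-hand side, which should read $d_\rho$.)

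For the identity of indiscernibles (i), the ``if'' direction is clear: taking the coupling concentrated on the diagonal when $\mathbb P_1=\mathbb P_2$ makes $\|\xi_1-\xi_2\|=0$ a.s., so $d_\rho=\rho(0)=0$. For the ``only if'' direction I would argue that $d_\rho$ dominates, up to a constant, the type-1 Wasserstein metric: since $\rho$ is monotone, translation invariant and $\rho(0)=0$, one has $\rho(X)\ge \E[X]$ for every nonnegative $X$ in its domain (this is the standard fact that a monotone, translation-invariant, positively homogeneous, subadditive functional vanishing at $0$ dominates the expectation — it follows because $\rho(X)\ge \rho(\E[X]) $ fails in general, so instead use subadditivity and the conditional-Jensen style bound, or simply cite that coherent risk measures admit a dual representation $\rho(X)=\sup_{\mathbb Q\in\mathcal Q}\E^{\mathbb Q}[X]\ge \E^{\mathbb P}[X]$ when $\mathbb P\in\mathcal Q$, which holds for law-invariant coherent $\rho$). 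Consequently $d_\rho(\mathbb P_1,\mathbb P_2)\ge d_{\mathrm W}(\mathbb P_1,\mathbb P_2)$, and since $d_{\mathrm W}$ already separates points on $\mathcal M(\Xi)$, $d_\rho(\mathbb P_1,\mathbb P_2)=0$ forces $\mathbb P_1=\mathbb P_2$.

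The triangle inequality (iii) is the main obstacle and the only place where a genuine construction is needed. The plan is the classical glueing argument: given $\mathbb P_1,\mathbb P_2,\mathbb P_3$ and $\delta>0$, pick near-optimal couplings $\Pi_{12}$ of $(\xi_1,\xi_2)$ and $\Pi_{23}$ of $(\xi_2,\xi_3)$, then use the disintegration of each with respect to the common marginal $\mathbb P_2$ to build a single distribution $\Pi_{123}$ of a triple $(\xi_1,\xi_2,\xi_3)$ whose $(1,2)$- and $(2,3)$-marginals are $\Pi_{12}$ and $\Pi_{23}$. Its $(1,3)$-marginal is then an admissible coupling of $\mathbb P_1$ and $\mathbb P_3$, so
\begin{equation*}
d_\rho(\mathbb P_1,\mathbb P_3)\le \rho^{\Pi_{123}}(\|\xi_1-\xi_3\|)\le \rho^{\Pi_{123}}(\|\xi_1-\xi_2\|+\|\xi_2-\xi_3\|)\le \rho^{\Pi_{123}}(\|\xi_1-\xi_2\|)+\rho^{\Pi_{123}}(\|\xi_2-\xi_3\|),
\end{equation*}
using the triangle inequality for $\|\cdot\|$ together with monotonicity, and then subadditivity, of $\rho$. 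Since $\rho$ is law invariant, the two terms on the right equal $\rho^{\Pi_{12}}(\|\xi_1-\xi_2\|)$ and $\rho^{\Pi_{23}}(\|\xi_2-\xi_3\|)$ respectively, which are within $\delta$ of $d_\rho(\mathbb P_1,\mathbb P_2)$ and $d_\rho(\mathbb P_2,\mathbb P_3)$; letting $\delta\downarrow 0$ finishes the proof. The one technical point to be careful about is the existence of near-optimal (rather than exactly optimal) couplings and the measurability of the disintegration kernels, which is standard on Polish spaces such as $\Xi\subseteq\R^m$; I would handle it by invoking the existence of regular conditional distributions and, if needed, note that the infimum in \eqref{eq:dis1} is attained under mild conditions (tightness plus lower semicontinuity of $\Pi\mapsto\rho^\Pi(\|\xi_1-\xi_2\|)$ in the weak topology), but the $\delta$-argument avoids having to settle attainment here.
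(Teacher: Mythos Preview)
Your proposal is correct and follows essentially the same route as the paper: identity of indiscernibles via $\rho\ge\E$ (the paper invokes its Lemma~\ref{lem:a1} for this) and hence $d_\rho\ge d_{\mathrm W}$, symmetry from the definition, the triangle inequality via a glueing of near-optimal couplings (the paper cites Kallenberg's Theorem~6.10 for the existence of the triple) followed by subadditivity of $\rho$ and of the norm, and non-negativity from monotonicity. The only cosmetic difference is that you spell out law invariance explicitly in (ii) and (iii), whereas the paper leaves this implicit.
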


It turns out that coherent Wasserstein metrics offer the needed flexibility for building a family of metrics $\{d_{\rho_{\alpha}}\}_{\alpha \in  A}$ satisfying the property of {\bf DD-DRW}. We highlight below that a family of metrics $\{d_{\rho_{\alpha}}\}_{\alpha \in  A}$ composed of coherent Wasserstein metrics naturally satisfies the property under very mild conditions. Recall that
$\operatorname{VaR}_{\alpha}(X)$ is the Value-at-Risk of $X$ at level $\alpha$ defined by
 $$
\operatorname{VaR}_{\alpha}(X)  =F^{-1}(\alpha) = \inf\{x: F(x)\ge \alpha\}, ~~~X\sim F,
$$
and a function $g: [0,1] \rightarrow [0,1]$ is called a distortion function if it is increasing and satisfies $g(0)=0, g(1)=1$. We denote  the left-derivative function of $g$ by $g'$.
\begin{proposition}
\label{prop:3insec2}
A family of metrics $\{d_{\rho_{\alpha}}\}_{\alpha \in  A}$, where $d_{\rho_{\alpha}}$ is a coherent Wasserstein metric,  satisfies {\bf DD-DRW} if and only if for every $\alpha \in A$, $\rho_\alpha$ can be represented by
\begin{equation}\label{eq:convex-kusuoka}
\rho_\alpha(X)=\sup _{g \in {\mathcal H}_{\rho_\alpha} } \int_{0}^{1} \operatorname{VaR}_{\alpha}(X)  \mathrm{d} g(\alpha),
\end{equation}
where ${\mathcal H}_{\rho_\alpha}$ is a subset of convex distortion functions satisfying
$c_{\alpha} := \sup_{g \in {\mathcal H}_{\rho_ \alpha}} \left\|g^{\prime}\right\|_{\infty} < \infty,$
and $\exists \, \alpha_n\in A,$ $n \in \N$ such that $c_{\alpha_n} \rightarrow \infty$ as $n\rightarrow \infty$.
 \end{proposition}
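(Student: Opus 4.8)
The plan is to distill both halves of the definition of \textbf{DD-DRW} into a single scalar attached to each $\rho_\alpha$ and to relate that scalar to the Kusuoka representation. Since a coherent Wasserstein metric is by definition built from a law-invariant coherent risk measure, Kusuoka's theorem (\cite{K01}) already supplies a representation $\rho_\alpha(X)=\sup_{g\in\mathcal{H}_{\rho_\alpha}}\rho_g(X)$ with $\rho_g(X):=\int_0^1\VaR_u(X)\,\d g(u)$ and $\mathcal{H}_{\rho_\alpha}$ a set of convex distortion functions (I write $u$ for the integration variable to avoid a clash with the index $\alpha$); so representation \eqref{eq:convex-kusuoka} is automatic, and the real content of the proposition is the two conditions on $c_\alpha:=\sup_{g\in\mathcal{H}_{\rho_\alpha}}\|g'\|_\infty$. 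I would first record the dictionary $c_\alpha=\sup\{\rho_\alpha(X):X\ge 0,\ \E[X]\le 1\}=\sup_{0<\beta<1}\rho_\alpha(X_\beta)/(1-\beta)$, where $X_\beta:=\id\{U>\beta\}$ and $U$ is uniform on $(0,1)$. The middle equality follows from $\rho_g(X)=\int_0^1\VaR_u(X)g'(u)\,\d u\le\|g'\|_\infty\,\E[X]$ for $X\ge 0$ (hence $\rho_\alpha\le c_\alpha\,\E$ on $L^1_+$), a bound saturated by pushing the mass of $X$ toward $u=1$; the last equality uses $\rho_g(X_\beta)=1-g(\beta)$ and the fact that for a convex distortion $g$ the quotient $(g(1)-g(\beta))/(1-\beta)$ increases to $g'(1^-)=\|g'\|_\infty$ as $\beta\uparrow1$.

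\emph{Robustness $\Leftrightarrow$ $c_\alpha<\infty$ for all $\alpha$.} The bound $\rho_\alpha\le c_\alpha\,\E$ gives the easy direction: if $c_\alpha<\infty$, then $\rho_\alpha$ is finite on every $L^1$ cost variable, and for any $\mathbb{P}$ with finite mean an independent coupling of $\widehat{\mathbb{P}}_N$ and $\mathbb{P}$ shows $d_{\rho_\alpha}(\widehat{\mathbb{P}}_N,\mathbb{P})\le c_\alpha\,\E[\|\xi_1-\xi_2\|]<\infty$, so $\mathbb{P}\in\mathbb{B}^{\rho_\alpha}_\varepsilon(\widehat{\mathbb{P}}_N)$ for some $\varepsilon$. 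For the converse I would argue by contraposition: if $c_\alpha=\infty$ and some $g\in\mathcal{H}_{\rho_\alpha}$ carries mass at $u=1$, then $\rho_\alpha=+\infty$ already on any cost unbounded above; otherwise choose $g_k\in\mathcal{H}_{\rho_\alpha}$ and levels $\beta_k\uparrow1$ with $1-\beta_k\le 2^{-k}$ and $g_k'\ge 4^k$ on $[\beta_k,1)$, and let $X$ have quantile function $q(u)=\sum_k 2^{-k}(1-\beta_k)^{-1}\id\{u>\beta_k\}$, so that $\E[X]=\sum_k 2^{-k}=1$ while $\rho_\alpha(X)\ge\rho_{g_k}(X)\ge 2^{-k}(1-\beta_k)^{-1}(1-g_k(\beta_k))\ge 2^k$ for every $k$, i.e.\ $\rho_\alpha(X)=+\infty$. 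Realising $X$ as $\|\xi_2\|$ for a suitable $\mathbb{P}$ and using $\|\xi_1-\xi_2\|\ge\|\xi_2\|-R$ for every coupling ($R$ bounding the support of $\widehat{\mathbb{P}}_N$), together with translation invariance and monotonicity of $\rho_\alpha$, gives $d_{\rho_\alpha}(\widehat{\mathbb{P}}_N,\mathbb{P})=+\infty$ although $\mathbb{P}$ has finite mean; so robustness fails.

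\emph{Data-drivenness $\Leftrightarrow$ $\exists\,\alpha_n$ with $c_{\alpha_n}\to\infty$.} Necessity is clean: if $\rho_{\alpha_n}\to\esssup$, then $\rho_{\alpha_n}(X_\beta)\to\esssup(X_\beta)=1$ for each fixed $\beta<1$, so by the dictionary $c_{\alpha_n}\ge\rho_{\alpha_n}(X_\beta)/(1-\beta)\to 1/(1-\beta)$, and letting $\beta\uparrow1$ forces $c_{\alpha_n}\to\infty$. For sufficiency I would first use $\rho\le\esssup$ for every coherent $\rho$ (monotonicity plus translation invariance), whence $d_{\rho_\alpha}\le d_\infty$ and $\mathbb{B}^{\mathrm{wc}}_\varepsilon(\widehat{\mathbb{P}}_N)\subseteq\mathbb{B}^{\rho_{\alpha_n}}_\varepsilon(\widehat{\mathbb{P}}_N)$ always; the reverse inclusion in the limit is what has to be produced. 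Here I would exploit $c_{\alpha_n}\to\infty$ to extract along the sequence expected-shortfall sections $\ES_{\beta_n}$ with $\beta_n\uparrow1$ dominated by the $\rho_{\alpha_n}$, so that the distortion mass migrates past every fixed level (i.e.\ $\inf_{g\in\mathcal{H}_{\rho_{\alpha_n}}}g(\beta)\to 0$ for each $\beta<1$) and hence $\rho_{\alpha_n}(Y)\uparrow\esssup(Y)$ for every cost $Y$; then pass $n\to\infty$ through the optimal-transport infimum $d_{\rho_{\alpha_n}}(\widehat{\mathbb{P}}_N,\mathbb{P})=\inf_\Pi\rho_{\alpha_n}^\Pi(\|\xi_1-\xi_2\|)$ to conclude $d_{\rho_{\alpha_n}}\uparrow d_\infty$, i.e.\ the balls converge.

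The step I expect to be the main obstacle is precisely this last one: interchanging $\lim_n$ with $\inf_\Pi$ in the transport problem, i.e.\ proving $\sup_n\inf_\Pi\rho_{\alpha_n}^\Pi(\|\xi_1-\xi_2\|)=\inf_\Pi\esssup^\Pi\|\xi_1-\xi_2\|$. I would approach it via weak-compactness of the set of couplings with the prescribed marginals, extract a limiting coupling of a sequence of near-optimal ones, and use lower semicontinuity of $\Pi\mapsto\Pi(\|\xi_1-\xi_2\|>\varepsilon)$ to compare its essential supremum with $\liminf_n\rho_{\alpha_n}$ along that sequence; one also has to check that the extracted sections $\ES_{\beta_n}$ really deliver $\esssup$-convergence and not merely $c_{\alpha_n}\to\infty$. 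A secondary, more bookkeeping, obstacle is keeping the Kusuoka representation valid on all of $L^1$ (with value $+\infty$ allowed) so that the contraposition for robustness is legitimate, which follows from the standard extension of a law-invariant coherent risk measure from $L^\infty$ to $L^1$ through the Kusuoka supremum.
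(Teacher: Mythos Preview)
Your treatment of robustness is more self-contained than the paper's: where the paper simply invokes Corollary~2.6 and Theorem~2.9 of \cite{R13} to conclude that finiteness of $\rho_\alpha$ on $L^1$ forces a Kusuoka representation with uniformly Lipschitz distortions, you construct an explicit $L^1$ cost with $\rho_\alpha=+\infty$ whenever $c_\alpha=\infty$. Your dictionary $c_\alpha=\sup\{\rho_\alpha(X):X\ge0,\ \E[X]\le1\}=\sup_{0<\beta<1}\rho_\alpha(X_\beta)/(1-\beta)$ is a clean device the paper does not isolate; for the necessity of $c_{\alpha_n}\to\infty$ both arguments amount to testing against the indicators $X_\beta$.

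There is, however, a genuine gap in the sufficiency direction of data-drivenness, and it is shared with the paper's own proof. You propose to ``extract along the sequence expected-shortfall sections $\ES_{\beta_n}$ with $\beta_n\uparrow1$ dominated by the $\rho_{\alpha_n}$''; the paper, analogously, selects $g_n\in\mathcal H_{\rho_{\alpha_n}}$ with $\|g_n'\|_\infty\to\infty$ and then asserts $g_n(u)\to\id_{\{u=1\}}$ pointwise. Neither step is valid. Take $A=\N$ and $\rho_n=\tfrac12\,\E+\tfrac12\,{\rm CVaR}_{1-1/n}$: this is a law-invariant coherent (indeed convex-distortion) risk measure with $g_n(u)=\tfrac12 u+\tfrac12(nu-n+1)_+$, so $c_n=\|g_n'\|_\infty=\tfrac12(n+1)\to\infty$, yet $g_n(u)\to\tfrac12 u$ for every $u<1$ and $\rho_n(X)\to\tfrac12\E[X]+\tfrac12\esssup X\ne\esssup X$ for every nonconstant $X$; moreover $\rho_n(X_\beta)\to\tfrac12(1-\beta)+\tfrac12<1=\ES_\beta(X_\beta)$, so no $\ES_\beta$ with $\beta>0$ is dominated by $\rho_n$. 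Thus the implication ``$c_{\alpha_n}\to\infty\Rightarrow\rho_{\alpha_n}\to\esssup$'' fails in general; what is actually needed is your ``mass-migration'' condition $\inf_{g\in\mathcal H_{\rho_{\alpha_n}}}g(\beta)\to0$ for every fixed $\beta<1$, which does \emph{not} follow from $c_{\alpha_n}\to\infty$. Your caveat at the end is on target but mislocated: the obstruction is not that the $\ES_{\beta_n}$ sections might fail to deliver $\esssup$, but that such sections cannot be extracted from $c_{\alpha_n}\to\infty$ alone. You also rightly note that the paper's proof does not touch the ball convergence $\mathbb B_\varepsilon^{\rho_{\alpha_n}}\to\mathbb B_\varepsilon^{\rm wc}$ appearing in the definition of data-drivenness; that would indeed require a separate interchange argument.
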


The representation \eqref{eq:convex-kusuoka} is known as the dual representation of a law-invariant coherent risk measure. The property of {\bf DD-DRW}, particularly the {\bf robustness} condition, boils down to requiring first the existence of such a representation. This is a very mild condition in that any lower-semicontinuous coherent risk measure is known to have a dual representation (see, e.g. \cite{K01}, \cite{JST06}   and \cite{R13}). This observation, more importantly, reveals that to build a family of metrics $\{d_{\rho_{\alpha}}\}_{\alpha \in  A}$ satisfying {\bf DD-DRW}, a generalization of Wasserstein metrics from a dual perspective, i.e. coherent Wasserstein metrics, is critical. This is in shape contrast to a generalization from an expected functional perspective, i.e. Proposition \ref{pro_eu}. The property of {\bf DD-DRW} further requires that the set ${\mathcal H}_{\rho_\alpha}$ in \eqref{eq:convex-kusuoka} contains only
Lipschitz continuous distortion functions  with uniformly bounded Lipschitz constants, i.e. bounded by $c_{\alpha}$, and that there exists a sequence of $\sup_{g \in {\mathcal H}_{\rho_\alpha}} \left\|g^{\prime}\right\|_{\infty}$ in $ \alpha\in A$ converges to the infinity. It is not hard to identify families of coherent risk measures satisfying these conditions. In particular, we identify the following two families of coherent Wasserstein metrics, defined through Conditional Value-at-Risk (CVaR) and expectiles, that satisfy {\bf DD-DRW}. CVaR and expectiles are two most popular risk measures proposed as convex substitutes for the traditional risk measure, Value-at-Risk (VaR).  In the remainder of this paper, we will pay particular attention to these two families of coherent Wasserstein metrics to demonstrate the practical value of our new framework.



\subsubsection*{CVaR-Wasserstein Metric}
Take  $\rho$ as CVaR at level $\alpha \in [0,1)$, i.e.,
$$\rho(X)={\rm CVaR}_\alpha(X) = \frac1{1-\alpha}\int_\alpha^1 {\rm VaR}_u(X)\d u , ~~\alpha\in [0,1).$$  We obtain the  following metric
\begin{align} \label{eq:dcvar}
d_{\rm CVaR_\alpha}\left(\mathbb{P}_{1}, \mathbb{P}_{2}\right)
:=\inf \left\{ {\rm CVaR}_\alpha^{\Pi}(\left\|\xi_{1}-\xi_{2}\right\|) :  \Pi \in  \Pi (\mathbb{P}_{1},\mathbb{P}_{2})\right\}
\end{align}
and the {\rm CVaR}-Wasserstein ball
$$
\mathbb{B}_{(1),\varepsilon}^{\alpha}\left(\widehat{\mathbb{P}}_{N}\right):=\left\{\mathbb{P} \in \mathcal{M}(\Xi): d_{\rm CVaR_\alpha}\left(\widehat{\mathbb{P}}_{N}, \mathbb{P}\right) \leq \varepsilon\right\},\; \alpha \in [0,1).
$$



\subsubsection*{Expectile-Wasserstein Metric}
Recall that the expectile $e_{\alpha}(X)$ of $X$ at level $\alpha \in[0,1)$ is defined as the unique solution to
$$
\alpha \mathbb{E}\left[(X-x)_{+}\right]=(1-\alpha) \mathbb{E}\left[(X-x)_{-}\right],
$$
where $a_+=\max\{a,0\}$ and $a_-=\max\{-a,0\}$. $e_{\alpha}(X)$ is coherent for any $\alpha \in [1/2,1)$, reduces to the mean $\E$ when $\alpha=1/2$ and converges to the worst-case risk measure $\esssup$ as $\alpha \rightarrow 1$.

Taking $\rho$ as expectile $e_{\alpha}(X)$ at level $\alpha \in [1/2,1)$, we have the following metric
\begin{align}\label{def-exp}
d_{e_{\alpha}}\left(\mathbb{P}_{1}, \mathbb{P}_{2}\right):=\inf \left\{e_{\alpha}^{\Pi}\left(\left\|\xi_{1}-\xi_{2}\right\|\right): \Pi \in \Pi (\mathbb{P}_{1},\mathbb{P}_{2}) \right\}
\end{align}
and the expectile-Wasserstein ball
$$
\mathbb{B}_{(2),\varepsilon}^{\alpha}\left(\widehat{\mathbb{P}}_{N}\right):=\left\{\mathbb{P} \in \mathcal{M}(\Xi):  d_{e_{\alpha}}\left(\widehat{\mathbb{P}}_{N}, \mathbb{P}\right) \leq \varepsilon\right\},\; \alpha \in [1/2,1).
$$
We close this section by providing a simple demonstration of how the family of {\rm CVaR}-Wasserstein metrics allows for constructing Wasserstein balls that can, on the one hand, contain heavy-tailed distributions of practical interest and on the other hand converge to the type-$\infty$ Wasserstein ball, as $\alpha \rightarrow 1$.  It is worth noting that by adopting a family of coherent Wasserstein metrics  $\{d_{\rho_{\alpha}}\}_{\alpha \in  A}$, the property of {\bf robustness} in fact implies that for any $\varepsilon>0$, the ambiguity set $\mathbb{B}^{\rho_\alpha}_{\varepsilon}(\widehat{\mathbb{P}}_{N} )$ always contains a heavy-tailed distribution.

\begin{example}
A function $F_{\gamma,\beta}$ with  $\gamma,\beta>0$ is called a Pareto distribution if
$$F_{\gamma,\beta} (x)=1-\left( 1+ \frac{x }\beta\right)^{-\gamma},~~x\ge 0.$$
Suppose that $\widehat{\mathbb P}=\delta_{0}$, i.e. a point mass at $0$. Let us define the following two sets. The first is based on the $p^{{\rm th}}$-order Wasserstein metric for some $p\geq2$, whereas the second is based on the CVaR-Wasserstein metric for some $\alpha \in [0,1)$
$$
\mathbb B_1(p)=\{ F_{\gamma,\beta} : d_{W_p}(F_{\gamma,\beta} ,\widehat{\mathbb P} )\le \epsilon,\; \gamma,\beta>0\},\;\; p \geq 2,$$
$$\mathbb B_2(\alpha)=\{ F_{\gamma,\beta} : d_{\rm CVaR_\alpha}(F_{\gamma,\beta} ,\widehat{\mathbb P} )\le \epsilon,\;\gamma,\beta>0\},\;\; \alpha \in [0,1).
$$

Figure \ref{worst-dis} demonstrates Pareto distributions with different $\gamma$ that are feasible to the CVaR-Wasserstein ball $\mathbb B_2(\alpha)$ for $\alpha=0$ (the left figure) and for $\alpha = 0.99$ (the right figure). Note first that none of the heavy-tailed distributions in the figures are feasible to the type-$p$ Wasserstein ball $\mathbb B_1(p)$, $p\geq 2$, since
for any $p > \gamma$, $d_{W_p}(F_{\gamma,\beta} ,\widehat{\mathbb P} )=\infty$, and thus, $F_{\gamma,\beta}\not \in \mathbb B_1(p)$. In contrast, for any $\alpha\in [0,1)$ and $\gamma>1$, there always exists $\beta$ such that $ F_{\gamma,\beta} \in \mathbb B_2(\alpha).$\footnote{
For $\alpha\in(0,1)$, it holds that
\begin{align*}
 d_{\rm CVaR_\alpha}(F_{\gamma,\beta} ,\widehat{\mathbb P} )
   & = \frac1{1-\alpha} \int_\alpha^1  \left[\beta(1-u)^{-1/\gamma}-\beta \right] d u  =  \frac{\gamma\beta}{\gamma-1}(1-\alpha)^{-1/\gamma}-\beta.
\end{align*}
This implies
  $$
 \mathbb B_2(\alpha)=\left\{ F_{\gamma,\beta} : \frac{\gamma}{\gamma-1}(1-\alpha)^{-1/\gamma}-1 \le \frac\epsilon\beta,\;\gamma,\beta>0\right\}.
$$
Note that  $\lim_{\beta\to0} \epsilon/\beta=\infty$ which implies for any $\gamma>1$, there exists $\beta>0$ small enough such that $\frac{\gamma}{\gamma-1}(1-\alpha)^{-1/\gamma}-1 \le \epsilon/\beta$.
So, for each $\gamma>1$, there exists $\beta>0$ such that $F_{\gamma,\beta} \in  \mathbb B_2(\alpha)$.}
Moreover, comparing the feasible Pareto distributions between the two figures, one can see that the Pareto distributions in the right figure (the case $\alpha=0.99$) concentrate significantly around the sample point $0$ while retaining ``a bit of" heavy tail. This showcases how the family of CVaR-Wasserstein allows for the simultaneous pursuit of robustness and data-drivenness. In the case $\alpha=0$ (the left figure), the CVaR-Wasserstein metric reduces to the type-1 Wasserstein metric and one can see from the figure that the feasible Pareto distributions disperse to the right noticeably away from the sample point, which shows the conservative nature of the type-1 Wasserstein metric.

\begin{figure}[h]
\begin{center}
\includegraphics[scale=0.45]{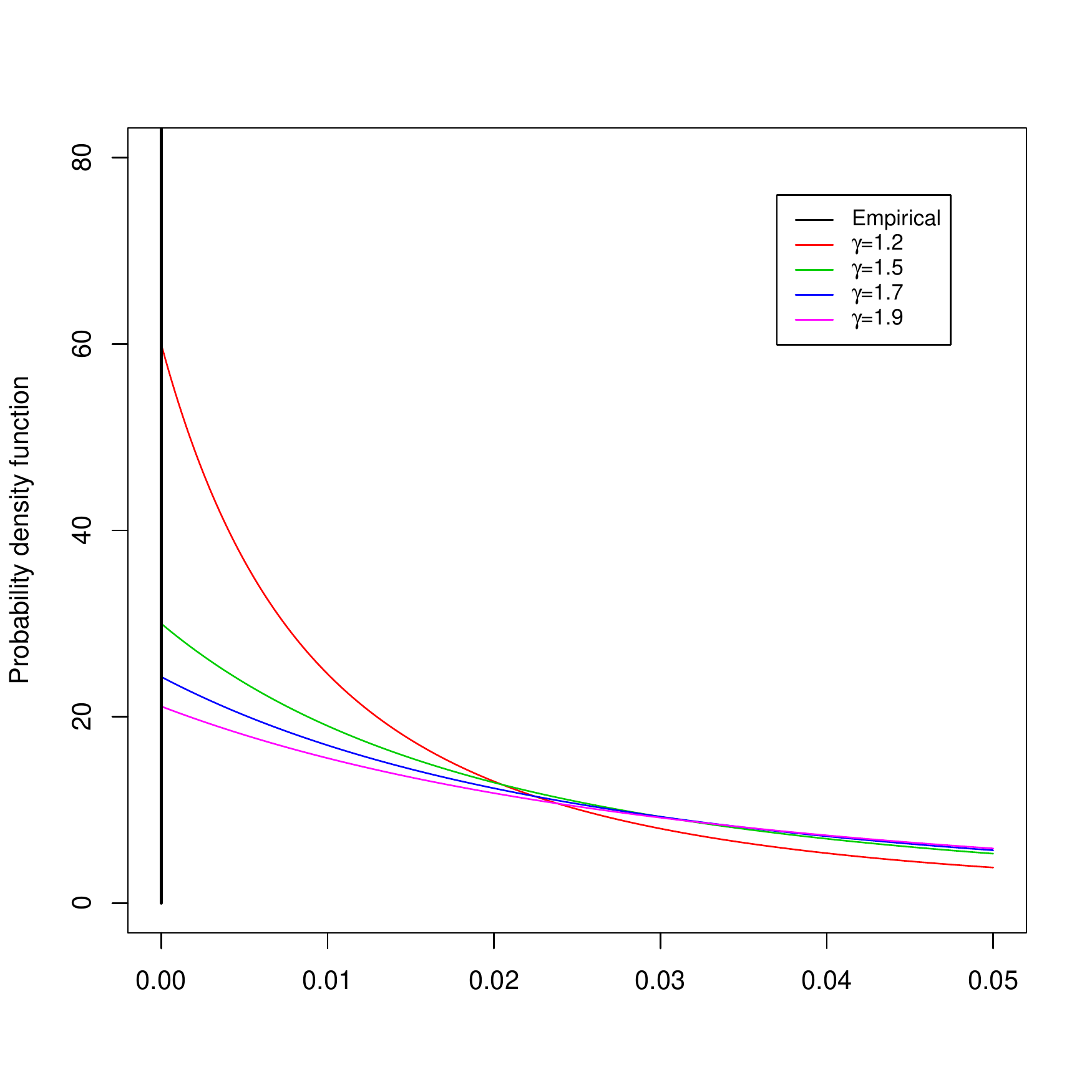}
\includegraphics[scale=0.45]{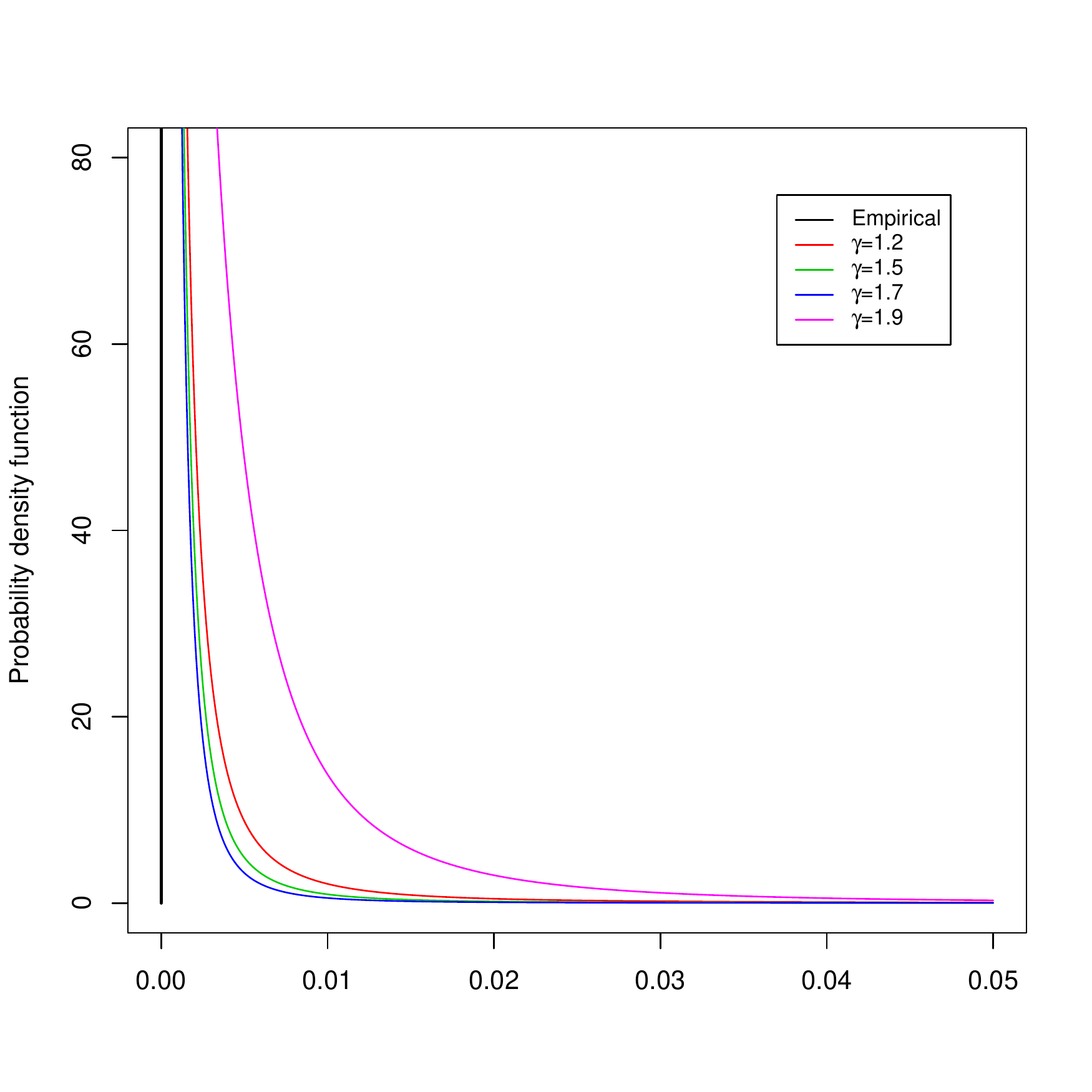}
\caption{Feasible distributions in $ \mathbb B_2(\alpha)$ with $\epsilon=0.1$ for  $\alpha=0$ (left) and  $\alpha=0.99$ (right).}
\label{worst-dis}
\end{center}
\end{figure}
\end{example}

\subsection{Generalized Wasserstein distributionally robust optimization} \label{sdddro}

We call the data-driven distributionally robust model \eqref{eq:wcprob} with an ambiguity set
 defined based on a coherent Wasserstein metric $d_{\rho_{\alpha}}$, i.e.
 $\mathbb{B}(\widehat{\mathbb{P}}_{N}):= \mathbb{B}^{\rho_\alpha}_{\varepsilon}\left(\widehat{\mathbb{P}}_{N}\right)$,
generalized Wasserstein distributionally robust optimization (GW-DRO) model. From this point on, we let $\widehat{J}_{N}$ and $\widehat{x}_{N}$ denote respectively the optimal value and the optimal solution to the GW-DRO model.

We will demonstrate throughout this paper that GW-DRO, like Wasserstein DRO (\cite{EK18}), has all the desirable properties of a data-driven model for solving the stochastic program \eqref{eq-220324-2}, namely finite-sample guarantee, asymptotic consistency, and tractability. The first refers to the guarantee that the out-of-sample performance of $\widehat{x}_{N}$ can be bounded, with some confidence level, by the optimal value $\widehat{J}_{N}$ when the ambiguity set $\mathbb{B}^{\rho_\alpha}_{\varepsilon}\left(\widehat{\mathbb{P}}_{N}\right)$ is properly calibrated, the second refers to assurance that $\widehat{J}_{N}$ and $\widehat{x}_{N}$ would converge respectively to the optimal value and solution to the nominal problem \eqref{eq-220324-2} as $N \rightarrow \infty$, and the third refers to the computational tractability of solving the minimax problem \eqref{eq:wcprob} for many loss functions $h(x,\xi)$ and sets $\mathbb{X}$.

 We provide precise statements regarding the first two properties below. In particular, we highlight that GW-DRO enjoys these two properties under a rather mild condition on the risk measure $\rho_{\alpha}$.

\begin{proposition} \label{pr-3finitesample} (Finite sample guarantee) Let $\rho_{\alpha}$ denote a  risk measure with the representation \eqref{eq:convex-kusuoka} satisfying
$ g'(1)\le c, \forall g\in\mathcal H_{\rho_\alpha} $ for some $c\in \R$, and $\mathbb{P}$ be a light-tailed distribution, i.e. satisfying
$
A:=\mathbb{E}^{\mathbb{P}}\left[\exp \left(\|\xi\|^{a}\right)\right]=\int_{\Xi} \exp \left(\|\xi\|^{a}\right) \mathbb{P}(\mathrm{d} \xi)<\infty,
$
for some $a>1$.

Assume that $\widehat{J}_{N}$ and $\widehat{x}_{N}$ represent the optimal value and an optimizer of the distributionally robust program \eqref{eq:wcprob} with an ambiguity set $\mathbb{B}(\widehat{\mathbb{P}}_{N} )=\mathbb{B}^{\rho_{\alpha}}_{\varepsilon_N(\eta)}\left(\widehat{\mathbb{P}}_{N}\right)$, $N\in\N$, for some $\eta \in(0,1)$, where
   \begin{equation*}
\varepsilon_N(\eta)= 
 \begin{cases}
c\varepsilon_{0}^{1 /  m_2}, & \text { if }   \varepsilon_{0}\le c, \\
c\varepsilon_{0}^{1 / a}, & \text { if } \varepsilon_{0}> c,\end{cases} ~  ~~\varepsilon_{0}  = \frac{\log \left(c_{1} /\eta\right)}{c_{2} N},
\end{equation*}
for some constants $c_1,c_2$ only depending on   $a$, $A$, $m$ and $m_2=\max\{m,2\}$. Then, it holds the finite sample guarantee
\begin{equation}\label{eq-fsg}
\mathbb{P}^{N}\left\{\widehat{\Xi}_{N}: \mathbb{E}^{\mathbb{P}}\left[h\left(\widehat{x}_{N}, \xi\right)\right] \leq \widehat{J}_{N}\right\} \geq 1-\eta.
\end{equation}
\end{proposition}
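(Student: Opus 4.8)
The plan is to reduce the finite-sample guarantee to a concentration inequality for the empirical distribution in the coherent Wasserstein metric $d_{\rho_\alpha}$, following the template of the type-1 Wasserstein DRO argument in \cite{EK18} but accounting for the fact that $\rho_\alpha$ may be strictly larger than the expectation. The key observation is that the event $\{\mathbb{E}^{\mathbb{P}}[h(\widehat{x}_N,\xi)] \le \widehat{J}_N\}$ is implied by the event $\{\mathbb{P} \in \mathbb{B}^{\rho_\alpha}_{\varepsilon_N(\eta)}(\widehat{\mathbb{P}}_N)\}$: indeed, whenever the true distribution $\mathbb{P}$ lies in the ambiguity set, we have $\mathbb{E}^{\mathbb{P}}[h(\widehat{x}_N,\xi)] \le \sup_{\mathbb{Q}\in\mathbb{B}^{\rho_\alpha}_{\varepsilon_N(\eta)}(\widehat{\mathbb{P}}_N)}\mathbb{E}^{\mathbb{Q}}[h(\widehat{x}_N,\xi)] = \widehat{J}_N$ by definition of $\widehat{J}_N$ and $\widehat{x}_N$ as the optimal value and optimizer of \eqref{eq:wcprob}. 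Hence it suffices to show $\mathbb{P}^{N}\{d_{\rho_\alpha}(\widehat{\mathbb{P}}_N,\mathbb{P}) \le \varepsilon_N(\eta)\} \ge 1-\eta$.

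The next step is to bound $d_{\rho_\alpha}(\widehat{\mathbb{P}}_N,\mathbb{P})$ in terms of the type-1 Wasserstein distance $d_{\mathrm W}(\widehat{\mathbb{P}}_N,\mathbb{P})$. Using the dual representation \eqref{eq:convex-kusuoka} together with the hypothesis $g'(1)\le c$ for all $g\in\mathcal H_{\rho_\alpha}$, I would show that for any $L^1$ transportation cost variable $X\ge 0$ one has $\rho_\alpha(X) = \sup_{g\in\mathcal H_{\rho_\alpha}}\int_0^1 \mathrm{VaR}_u(X)\,\mathrm d g(u) \le \sup_{g\in\mathcal H_{\rho_\alpha}}\|g'\|_\infty\,\mathbb{E}[X]$ when $\mathbb{E}[X]$ is small, and $\rho_\alpha(X)\le (\text{const})\cdot(\mathbb{E}[X])^{1/a}$-type control in the regime where the transportation cost can be large, via a truncation argument splitting $X$ at a threshold and using the light-tail assumption $A<\infty$ to control the contribution of the tail quantiles. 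Taking the infimum over couplings $\Pi\in\Pi(\widehat{\mathbb{P}}_N,\mathbb{P})$ then yields a bound of the form $d_{\rho_\alpha}(\widehat{\mathbb{P}}_N,\mathbb{P}) \le c\cdot d_{\mathrm W}(\widehat{\mathbb{P}}_N,\mathbb{P})$ in the small-distance regime and $d_{\rho_\alpha}(\widehat{\mathbb{P}}_N,\mathbb{P}) \le c\cdot d_{\mathrm W}(\widehat{\mathbb{P}}_N,\mathbb{P})^{1/a}$ in the large-distance regime, where the two cases correspond exactly to $\varepsilon_0 \le c$ versus $\varepsilon_0 > c$ in the statement (here $\varepsilon_0$ plays the role of the type-1 Wasserstein concentration radius). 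Finally I would invoke the classical measure-concentration result for the empirical distribution in type-1 Wasserstein distance (Fournier--Guillin, as used in \cite{EK18}): under the light-tail condition there exist constants $c_1,c_2$ depending only on $a,A,m$ with $\mathbb{P}^{N}\{d_{\mathrm W}(\widehat{\mathbb{P}}_N,\mathbb{P}) > \varepsilon_0\} \le \eta$ whenever $\varepsilon_0 = \log(c_1/\eta)/(c_2 N)$, and combine it with the comparison bound to conclude $\mathbb{P}^{N}\{d_{\rho_\alpha}(\widehat{\mathbb{P}}_N,\mathbb{P}) \le \varepsilon_N(\eta)\} \ge 1-\eta$.

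The main obstacle is the comparison step: controlling $\rho_\alpha$ of a transportation cost variable by powers of its mean, uniformly over the class $\mathcal H_{\rho_\alpha}$, and in particular correctly matching the two regimes (the linear bound with constant $c$ when the radius is below $c$, and the $1/a$-power bound tied to the exponent $a$ in the light-tail condition when the radius exceeds $c$). This requires carefully distributing a given type-1 transport budget across quantiles of the coupling: the adversary can push a small amount of mass far out, and the distortion weight $g'$ near $u=1$ (bounded by $g'(1)\le c$) is what keeps $\rho_\alpha$ of that excursion finite, while the light-tail assumption on $\mathbb{P}$ bounds how far the empirical points can plausibly sit from the true support. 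Handling the dimension-dependent exponent $m_2 = \max\{m,2\}$ correctly — it enters through the Fournier--Guillin rate $N^{-1/m}$ — and tracking which constants depend only on $a,A,m$ will be the bookkeeping-heavy part, but the conceptual core is the quantile-splitting estimate relating $d_{\rho_\alpha}$ to $d_{\mathrm W}$.
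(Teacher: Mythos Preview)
Your overall structure is right: reduce to the coverage event $\{\mathbb P\in\mathbb B^{\rho_\alpha}_{\varepsilon_N(\eta)}(\widehat{\mathbb P}_N)\}$, compare $d_{\rho_\alpha}$ to $d_{\mathrm W}$, and invoke Fournier--Guillin. But you have misidentified where the two regimes in the statement come from, and as a result you are planning unnecessary (and possibly unprovable) work in the comparison step.

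The paper's proof uses a \emph{single} linear comparison, valid uniformly: since each $g\in\mathcal H_{\rho_\alpha}$ is a convex distortion, $g'$ is nondecreasing and hence $\|g'\|_\infty=g'(1)\le c$. Then for any nonnegative $X$,
\[
\rho_\alpha(X)=\sup_{g\in\mathcal H_{\rho_\alpha}}\int_0^1 \mathrm{VaR}_u(X)\,g'(u)\,\mathrm du
\le c\int_0^1 \mathrm{VaR}_u(X)\,\mathrm du=c\,\mathbb E[X],
\]
so $d_{\rho_\alpha}\le c\,d_{\mathrm W}$ in all regimes. The two cases $\varepsilon_0\le c$ versus $\varepsilon_0>c$ in the definition of $\varepsilon_N(\eta)$ are inherited directly from the two cases in the Fournier--Guillin/\cite{EK18} concentration radius $\varepsilon_N^{\rm EK}(\eta)$ (which switches between $\varepsilon_0^{1/m_2}$ and $\varepsilon_0^{1/a}$), multiplied through by the constant $c$. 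They do \emph{not} arise from two different relationships between $d_{\rho_\alpha}$ and $d_{\mathrm W}$.

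Your proposed ``large-distance'' bound $d_{\rho_\alpha}\le c\,d_{\mathrm W}^{1/a}$, the truncation argument, and the quantile-splitting estimate are therefore all superfluous; the light-tail hypothesis $A<\infty$ is used only inside Fournier--Guillin, not in the $\rho_\alpha$-vs-expectation comparison. Once you drop that detour, the proof is essentially one line after the H\"older bound above: $\mathbb B^{\mathrm W}_{\varepsilon/c}(\widehat{\mathbb P}_N)\subseteq \mathbb B^{\rho_\alpha}_{\varepsilon}(\widehat{\mathbb P}_N)$, so the \cite{EK18} coverage probability at radius $\varepsilon_N^{\rm EK}(\eta)$ transfers directly to radius $\varepsilon_N(\eta)=c\,\varepsilon_N^{\rm EK}(\eta)$.
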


\begin{proposition} \label{prop:3} (Asymptotic consistency) Under the condition of Proposition \ref{pr-3finitesample}, let $\varepsilon_{N}= \left( \frac{k_N }N\right)^{1/m_2}$, $ N \in \mathbb{N}$ where   $ k_N/N^\delta \to 0 $ and $ \log N/k_N\to 0$ as $N\to\infty$ for some $\delta<1$,  $m_2=\max\{m, 2\}$, and assume that $\widehat{J}_{N}$ and $\widehat{x}_{N}$ represent the optimal value and an optimizer of the distributionally robust program \eqref{eq:wcprob} with an ambiguity set $\mathbb{B}(\widehat{\mathbb{P}}_{N} )=\mathbb{B}^{\rho_{\alpha}}_{\varepsilon_N}\left(\widehat{\mathbb{P}}_{N}\right)$, $N\in\N$.
\begin{itemize}
\item [(i)] If $h(x, \xi)$ is upper semicontinuous in $\xi$ and there exists $L \geq 0$ with $|h(x$, $\xi)| \leq$ $L(1+\|\xi\|)$ for all $x \in \mathbb{X}$ and $\xi \in \Xi$, then $\mathbb{P}^{\infty}$-almost surely we have $\widehat{J}_{N} \downarrow J^{\star}$ as $N \rightarrow \infty$ where $J^{\star}$ is the optimal value of (\ref{eq-220324-2}).
\item [(ii)]
 If the assumptions of assertion (i) hold, $\mathbb{X}$ is closed, and $h(x, \xi)$ is lower semicontinuous in $x$ for every $\xi \in \Xi$, then any accumulation point of $\left\{\widehat{x}_{N}\right\}_{N \in \mathbb{N}}$ is $\mathbb{P}^{\infty}$-almost surely an optimal solution for (\ref{eq-220324-2}).
 \end{itemize}
\end{proposition}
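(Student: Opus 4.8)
The plan is to run the classical consistency argument for Wasserstein DRO (\cite{EK18}), the one genuinely new ingredient being a comparison between a coherent Wasserstein ball and an ordinary type-1 ball. Two elementary facts will be used throughout. First, a law-invariant coherent risk measure dominates the mean, $\rho(X)\ge\E[X]$: this is immediate from \eqref{eq:convex-kusuoka}, since for nonnegative $X$ every distortion integral appearing there is at least $\int_0^1\operatorname{VaR}_u(X)\,\d u=\E[X]$. Consequently $\rho^{\Pi}(\|\xi_1-\xi_2\|)\ge\E^{\Pi}[\|\xi_1-\xi_2\|]$ for every coupling $\Pi$, hence $d_{\rho}\ge d_{\mathrm W}$ and $\mathbb{B}^{\rho_\alpha}_{\varepsilon}(\widehat{\mathbb P}_N)\subseteq\mathbb{B}^{\mathrm W}_{\varepsilon}(\widehat{\mathbb P}_N)$ for every $\varepsilon>0$. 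Second, the growth bound $|h(x,\xi)|\le L(1+\|\xi\|)$ together with finiteness of $\E^{\mathbb P}[\|\xi\|]$ (which follows from light-tailedness) yields $-\infty<J^\star<\infty$, $\widehat J_N\in\R$ for all $N$, and $h(x,\cdot)\in L^1(\mathbb P)$ for every $x$.

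Next I isolate an almost-sure event. Choosing $\eta_N$ so that the radius $\varepsilon_N(\eta_N)$ of Proposition~\ref{pr-3finitesample} equals $\varepsilon_N=(k_N/N)^{1/m_2}$ gives, once $\varepsilon_0=k_N/(c^{m_2}N)\le c$ (true for large $N$, since $k_N/N\to0$), the value $\eta_N=c_1\exp(-c_2 k_N/c^{m_2})$. Because $\log N/k_N\to0$ we get $\sum_N\eta_N<\infty$; and because $\varepsilon_N\to0$ (which holds since $k_N/N^\delta\to0$ with $\delta<1$) while $\varepsilon_N^{m_2}N=k_N$ outgrows $\log N$, the Fournier--Guillin concentration inequality (valid under the light-tail hypothesis carried over from Proposition~\ref{pr-3finitesample}) gives $\sum_N\mathbb P^N\{d_{\mathrm W}(\widehat{\mathbb P}_N,\mathbb P)>\varepsilon_N\}<\infty$. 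By the Borel--Cantelli lemma, $\mathbb P^\infty$-almost surely there is a random $N_0$ such that for all $N\ge N_0$ one has simultaneously $\E^{\mathbb P}[h(\widehat x_N,\xi)]\le\widehat J_N$ and $d_{\mathrm W}(\widehat{\mathbb P}_N,\mathbb P)\le\varepsilon_N$. All of the following takes place on this event.

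For (i): since $\widehat x_N\in\mathbb X$, $J^\star\le\E^{\mathbb P}[h(\widehat x_N,\xi)]\le\widehat J_N$ for $N\ge N_0$, so $\liminf_N\widehat J_N\ge J^\star$ --- this is the ``$\widehat J_N\ge J^\star$ eventually'' content behind the symbol $\downarrow$. For the matching upper bound, fix $\epsilon>0$, pick $x_\epsilon\in\mathbb X$ with $\E^{\mathbb P}[h(x_\epsilon,\xi)]\le J^\star+\epsilon$, and note $\widehat J_N\le\sup_{\mathbb Q\in\mathbb{B}^{\rho_\alpha}_{\varepsilon_N}(\widehat{\mathbb P}_N)}\E^{\mathbb Q}[h(x_\epsilon,\xi)]\le\sup_{\mathbb Q\in\mathbb{B}^{\mathrm W}_{\varepsilon_N}(\widehat{\mathbb P}_N)}\E^{\mathbb Q}[h(x_\epsilon,\xi)]$, the last supremum being finite (bounded by $L(1+\E^{\widehat{\mathbb P}_N}[\|\xi\|]+\varepsilon_N)$). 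Take $\mathbb Q_N$ attaining it up to $1/N$; then $d_{\mathrm W}(\mathbb Q_N,\mathbb P)\le\varepsilon_N+d_{\mathrm W}(\widehat{\mathbb P}_N,\mathbb P)\to0$, so $\mathbb Q_N\to\mathbb P$ weakly with $\int\|\xi\|\,\mathbb Q_N(\d\xi)\to\int\|\xi\|\,\mathbb P(\d\xi)$, and hence $\{|h(x_\epsilon,\cdot)|\}$ is uniformly integrable along $\{\mathbb Q_N\}$. Fixing $M$, the function $\xi\mapsto h(x_\epsilon,\xi)\wedge M$ is upper semicontinuous and bounded above, so $\limsup_N\E^{\mathbb Q_N}[h(x_\epsilon,\cdot)\wedge M]\le\E^{\mathbb P}[h(x_\epsilon,\cdot)\wedge M]$ by the Portmanteau inequality; combining with $\sup_N\E^{\mathbb Q_N}[(h(x_\epsilon,\cdot)-M)_+]\to0$ as $M\to\infty$ (uniform integrability) and $\E^{\mathbb P}[h(x_\epsilon,\cdot)\wedge M]\uparrow\E^{\mathbb P}[h(x_\epsilon,\cdot)]$ gives $\limsup_N\widehat J_N\le\limsup_N\E^{\mathbb Q_N}[h(x_\epsilon,\cdot)]\le\E^{\mathbb P}[h(x_\epsilon,\cdot)]\le J^\star+\epsilon$. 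Letting $\epsilon\downarrow0$ and combining with the $\liminf$ bound yields $\widehat J_N\to J^\star$, $\mathbb P^\infty$-a.s.

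For (ii): if $\widehat x_{N_k}\to\widehat x_\infty$ then $\widehat x_\infty\in\mathbb X$ by closedness, and on the event above $\E^{\mathbb P}[h(\widehat x_{N_k},\xi)]\le\widehat J_{N_k}\to J^\star$ by (i), so $\limsup_k\E^{\mathbb P}[h(\widehat x_{N_k},\xi)]\le J^\star$. Lower semicontinuity of $h(\cdot,\xi)$ gives $h(\widehat x_\infty,\xi)\le\liminf_k h(\widehat x_{N_k},\xi)$ pointwise, and since $h(\widehat x_{N_k},\xi)\ge-L(1+\|\xi\|)\in L^1(\mathbb P)$, Fatou's lemma yields $\E^{\mathbb P}[h(\widehat x_\infty,\xi)]\le\liminf_k\E^{\mathbb P}[h(\widehat x_{N_k},\xi)]\le J^\star$, so $\widehat x_\infty$ solves \eqref{eq-220324-2}. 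I expect the delicate point to be the reverse inequality in (i): because $h$ is merely upper semicontinuous with linear --- not Lipschitz --- growth, the worst-case expectation over $\mathbb{B}^{\mathrm W}_{\varepsilon_N}(\widehat{\mathbb P}_N)$ cannot be bounded by $\E^{\widehat{\mathbb P}_N}[h(x_\epsilon,\cdot)]+O(\varepsilon_N)$ directly, and one must route the limit through the containment $\mathbb{B}^{\rho_\alpha}_\varepsilon\subseteq\mathbb{B}^{\mathrm W}_\varepsilon$, the characterization of $W_1$-convergence (weak convergence plus convergence of first moments), and the truncation/uniform-integrability step above; a secondary, routine nuisance is matching the radius parametrization $(k_N/N)^{1/m_2}$ to the $\varepsilon_N(\eta)$ of Proposition~\ref{pr-3finitesample} and verifying Borel--Cantelli summability from $\log N/k_N\to0$ and $k_N/N^\delta\to0$.
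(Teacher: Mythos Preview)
Your argument is correct. The paper, however, takes a slightly more economical route: rather than separating the $\liminf$ and $\limsup$ bounds and re-deriving the weak-convergence/uniform-integrability step, it exploits the \emph{two-sided} sandwich
\[
\mathbb{B}^{\mathrm W}_{\varepsilon_N/c}\bigl(\widehat{\mathbb P}_N\bigr)\subseteq
\mathbb{B}^{\rho_\alpha}_{\varepsilon_N}\bigl(\widehat{\mathbb P}_N\bigr)\subseteq
\mathbb{B}^{\mathrm W}_{\varepsilon_N}\bigl(\widehat{\mathbb P}_N\bigr),
\]
which follows from $\E\le\rho_\alpha\le c\,\E$ on nonnegative random variables (your inclusion is the right one; the left one comes from the bound $g'(1)\le c$ in Proposition~\ref{pr-3finitesample}). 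This yields $\widehat J_N^{(2)}\le \widehat J_N\le \widehat J_N^{(1)}$ where $\widehat J_N^{(1)},\widehat J_N^{(2)}$ are the type-1 Wasserstein DRO values with radii $\varepsilon_N$ and $\varepsilon_N/c$; the paper then checks that both radii satisfy the hypotheses of Theorem~3.6 in \cite{EK18} (this is exactly your $\eta_N$ calculation, done twice) and invokes that theorem as a black box to get $\widehat J_N^{(i)}\to J^\star$ almost surely, whence $\widehat J_N\to J^\star$ by squeezing. Part~(ii) is then referred verbatim to \cite{EK18}. Your version instead gets the lower bound from the finite-sample guarantee and reproves the upper-bound convergence from scratch via near-maximizers $\mathbb Q_N$, $W_1$-convergence, and Portmanteau with truncation; this is essentially the content of \cite{EK18}'s Theorem~3.6 unpacked. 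What the paper's approach buys is brevity and a clean reduction to the known Wasserstein case; what yours buys is self-containment. One small point to watch in your Portmanteau step: $h(x_\epsilon,\cdot)\wedge M$ is bounded above but not below, so the standard Portmanteau inequality does not apply directly; you should also truncate from below (or equivalently invoke uniform integrability of the negative part, which you have via $|h|\le L(1+\|\xi\|)$) before passing to the limit.
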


These two guarantees are qualitatively identical to those of the Wasserstein DRO (c.f. \cite{EK18}), except further parametrized by the exact specification of the risk measure $\rho_{\alpha}$.  Both guarantees require only that any distortion function $g$ that may be invoked by (the dual representation of) the risk measure $\rho_{\alpha}$ has a bounded density at the worst-case value, i.e. the condition $g'(1)\le c, \forall g\in\mathcal H_{\rho_\alpha}$.  One can observe by Proposition \ref{prop:3insec2} that, somewhat interestingly, any coherent Wasserstein metric $d_{\rho_{\alpha}}$ chosen from a {\bf DD-DRW} family $\{d_{\rho_{\alpha}}\}_{\alpha \in A}$ would naturally meet this requirement. {\bf DD-DRW} families of metrics are thus of rather convenient choices.


Lastly, with regard to the tractability of GW-DRO, we will demonstrate in Sections \ref{sec:4-concave} and \ref{sec:4convex} that the minimax problem \eqref{eq:wcprob} can often be solved as finite-dimensional convex programs for many loss functions $h(x,\xi)$ arising from practical applications. In particular, we consider cases where the loss function $h(x,\xi)$ is either a general concave or convex function in $\xi$, and as motivating examples we present first in the next section a number of applications.

\subsection{Illustrative examples} \label{illexample}
The first application is the classical two-stage planning problem with recourse decisions, which is particularly common in operations management contexts. The loss function $h(x,\xi)$ in this application could either be concave or convex in $\xi$, depending on the exact setting of the second-stage problem.

\subsubsection*{Example (i): Two-stage problems with recourse}
Let $x_0$ denote the first stage, or ``here-and-now", decision that needs to be made before the realization of a random vector $\xi \sim \mathbb{P}$. In the case where the distribution $\mathbb{P}$ is unknown, the following two-stage distributionally robust linear program naturally arises
$$ \min_{x_0 \in \mathbb{X}} c^\top x_0 + \sup _{\mathbb{P} \in \mathbb{B}(\widehat{\mathbb{P}}_{N} )} \mathbb{E}^{\mathbb{P}}[Q(x_0,\xi)], $$
where $Q(x_0,\xi)$ is the recourse function capturing the optimal value of the recourse problem. It can take either the formulation of
$$ Q(x_0,\xi) = \min_{x_1} \left\{x_1^\top \bar{Q} \xi \;|\; Tx_0+Wx_1 \geq h \right\}, $$
where the objective of the recourse problem is uncertain due to $\xi$, or the formulation of
$$ Q(x_0,\xi) = \min_{x_1} \left\{q^\top x_1 \;|\; Tx_0 + Wx_1 \geq h+H\xi  \right\}, $$
where the ``right-hand-side" of the recourse problem is uncertain. Applications of these two cases can be found, for instance, in \cite{BDNT10}. Clearly, the first case corresponds to a loss function $h(x,\xi)$ in \eqref{eq-220324-2} that is concave in $\xi$, whereas the second case corresponds to a loss function $h(x,\xi)$ that is convex in $\xi$.

The next application is a problem of fundamental interest in finance.

\subsubsection*{Example (ii): Portfolio optimization} Let $\xi \sim \mathbb{P}$ denote a random vector of returns from $n$ different financial assets. The problem of robust portfolio optimization is a widely studied topic (see, e.g. \cite{DY10}), where a portfolio vector $x \in \mathbb{R}^n$ needs to be sought that maximizes the worst-case utility  subject to investment constraints captured by $\mathbb{X}$
$$ \max_{x \in \mathbb{X}} \inf _{\mathbb{P} \in \mathbb{B}(\widehat{\mathbb{P}}_{N} )}   \mathbb{E}^{\mathbb{P}}[u( \xi^\top x)]. $$
The utility function $u:\mathbb{R}\rightarrow \mathbb{R}$ is non-decreasing and is often assumed to be concave so to capture the risk-aversion attitude of an investor. This corresponds to a loss function $h(x,\xi):= - u(\xi^\top x)$ in \eqref{eq-220324-2} that is convex in $\xi$.

The last application is motivated by the recent surge of interest in statistical learning.

\subsubsection*{Example (iii): Machine learning} In supervised learning, a random vector $\xi:=(\xi^x,\xi^y) \sim \mathbb{P}$ represents an input-output pair and the goal is to seek a predictor (function) $f(\xi^x;\beta)$ parameterized by $\beta$ that best maps a given input value $\xi^x$ to a predicted output value. The issue of sampling errors, i.e. the uncertainty of $\mathbb{P}$, has motivated the recent study of the following distributionally robust statistical learning problem
$$ \min_{\beta \in {\cal B}} \sup _{\mathbb{P} \in \mathbb{B}(\widehat{\mathbb{P}}_{N} )}
\mathbb{E}^\mathbb{P}[ \hat{\ell}(f(\xi^x;\beta),\xi^y)], $$
where $\hat{\ell}$ is a function capturing losses incurred from prediction errors. We make the common assumption of a linear predictor, i.e. $f(\xi^x;\beta) = \beta^\top \xi^x$ for some $\beta \in {\cal B}$. In the case of a regression problem, i.e.
$\xi^y \in \mathbb{R}$, one can set
$$ \hat{\ell}(f(\xi^x;\beta),\xi^y) := \ell(\beta^\top \xi^x - \xi^y),\;\;\text{for some } \ell:\mathbb{R}\rightarrow \mathbb{R}_+$$
and
$ \| \xi_1-\xi_2 \| : =\| (\xi_1^x,\xi_1^y) - (\xi_2^x,\xi_2^y) \|$ (in \eqref{eq:distance}),
whereas in the case of a classification problem, i.e, $\xi^y \in \{-1,+1\}$, one can set
$$  \hat{\ell}(f(\xi^x;\beta),\xi^y) := \ell(\xi^y \cdot \beta^\top \xi^x),\;\;\text{for some non-increasing } \ell:\mathbb{R}\rightarrow \mathbb{R}_+ ,$$
and $ \| \xi_1-\xi_2 \| : = \| \xi_1^x - \xi_2^x\| + \mathbb{I}(\xi_1^y-\xi_2^y)$ (in \eqref{eq:distance}) where $\mathbb{I}(s) = 0$ if $s=0$ and $\mathbb{I}(s)=\infty$ otherwise. The function $\ell$ chosen in most machine learning methods is a convex function (see e.g. \cite{SKE19}). The case of regression would thus correspond to a loss function $h(x,\xi)$ in \eqref{eq-220324-2} that is convex in $\xi$. In the case of classification, since the chosen norm $\| \xi_1-\xi_2 \|$ assumes the cost of perturbing an output is infinitely large, any distribution $\mathbb{P} \in \mathbb{B}(\widehat{\mathbb{P}}_{N})$ would differ from the empirical distribution $\widehat{\mathbb{P}}_{N}$ only along the input space. We thus need only the observation that the loss function is convex in input variable $\xi^x$.



\section{Solving GW-DRO with concave loss functions}\label{sec:4-concave}
To study the tractability of solving GW-DRO, we focus first on the inner maximization problem of \eqref{eq:wcprob} -- the worst-case expectation problem. For notational convenience, we suppress the decision variable $x$ in $ \sup _{\mathbb{P} \in \mathbb{B}_{\rho,\varepsilon} (\widehat{\mathbb{P}}_{N} )} \mathbb{E}^{\mathbb{P}}[h(x, \xi)]$ and write the problem as
\begin{align} \label{eq:main}
\sup_{\mathbb{P} \in \mathcal{M}(\Xi)}~~ &\E^{\mathbb P}[\ell(\xi)]\\
{\rm subject~ to}~~& d_{\rho} \left(\widehat{\mathbb{P}}_{N} , \mathbb{P}\right)\leq \epsilon.\nonumber
\end{align}
Throughout this section, we consider the case where the loss function $\ell$ is concave in $\xi$. Following the definition of coherent Wasserstein metrics, the above problem can be stated also in terms of the joint distributions $\Pi$
\begin{align} \label{eq:main}
\sup_{\Pi }~~ &\E^{\Pi}[\ell(\xi)]\\
{\rm subject~ to}~~& \rho^{\Pi}(\|\widehat{\xi}-\xi\|) \leq \epsilon, \nonumber \\
                          ~~&\Pi \in \mathcal M(\Xi^2) \text { is a joint distribution of } \widehat{\xi} \text { and } \xi \nonumber\\
                          ~~& \text { with marginals } \widehat{\mathbb{P}}_{N} \text { and } \mathbb{P}, \text { respectively}. \nonumber
\end{align}
The above problem is distinctly different from and structurally more involved than the worst-case expectation problem from Wasserstein DRO in that it is a nonlinear optimization problem over the variable $\Pi$ due to the nonlinearity of the function
$\rho^\Pi$.
An even more fundamental challenge of solving the problem \eqref{eq:main} lies in the following observation.

\begin{proposition}\label{prop:6-nonconvex}
The feasible set of $\Pi$ in \eqref{eq:main} may be nonconvex, and thus the worst-case expectation problem \eqref{eq:main} is a nonconvex optimization problem in general.
\end{proposition}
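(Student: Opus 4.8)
The plan is to refute convexity of the feasible region of \eqref{eq:main} by a single explicit instance, and then observe that optimizing over a nonconvex set is by definition a nonconvex program. The requirement ``$\Pi$ has first marginal $\widehat{\mathbb P}_N$ and second marginal some admissible $\mathbb P$'' is affine in $\Pi$, so the only candidate source of nonconvexity is the constraint $\rho^{\Pi}(\|\widehat\xi-\xi\|)\le\varepsilon$. To keep the bookkeeping minimal I would first take $N=1$: then $\widehat{\mathbb P}_1=\delta_{\widehat\xi_1}$ forces $\Pi=\delta_{\widehat\xi_1}\otimes\mathbb P$, the map $\Pi\leftrightarrow\mathbb P$ is an affine bijection, and the feasible set of \eqref{eq:main} is, under this identification, $\mathcal F_\varepsilon:=\{\mathbb P\in\mathcal M(\Xi):\rho^{\mathbb P}(\|\widehat\xi_1-\xi\|)\le\varepsilon\}$; nonconvexity of $\mathcal F_\varepsilon$ then gives nonconvexity of the feasible set of $\Pi$.

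The heart of the argument is a concavity observation. Choose $\rho=\mathrm{CVaR}_\alpha$ with a fixed $\alpha\in(0,1)$, which by Section~\ref{cwm} does induce a coherent Wasserstein metric. From the representation $\mathrm{CVaR}_\alpha(X)=\inf_{t\in\mathbb R}\{t+\tfrac1{1-\alpha}\mathbb E[(X-t)_+]\}$ and the fact that $\mathbb P\mapsto\mathbb E^{\mathbb P}[(\|\widehat\xi_1-\xi\|-t)_+]$ is affine in $\mathbb P$ for each fixed $t$, the functional $\Phi(\mathbb P):=\mathrm{CVaR}_\alpha^{\mathbb P}(\|\widehat\xi_1-\xi\|)$ is an infimum of affine functionals of $\mathbb P$, hence \emph{concave}. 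A sublevel set $\{\Phi\le\varepsilon\}$ of a concave functional need not be convex, so it remains only to exhibit two feasible laws whose midpoint is cut off.

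To do so, set $\widehat\xi_1=0$, fix $\delta\in(0,1-\alpha)$, put $M=\varepsilon(1-\alpha)/\delta$, and take $\mathbb P_1=\delta_\varepsilon$ and $\mathbb P_2=(1-\delta)\delta_0+\delta\delta_M$. Then $\mathrm{CVaR}_\alpha^{\mathbb P_1}(|\xi|)=\varepsilon$ by constancy, and $\mathrm{CVaR}_\alpha^{\mathbb P_2}(|\xi|)=\tfrac{\delta M}{1-\alpha}=\varepsilon$ (using $\delta<1-\alpha$ and the choice of $M$), so $\mathbb P_1,\mathbb P_2\in\mathcal F_\varepsilon$. The objectives $t\mapsto t+\tfrac1{1-\alpha}\mathbb E[(|\xi|-t)_+]$ of $\mathbb P_1$ and $\mathbb P_2$ are minimized at $t^\star=\varepsilon$ and $t^\star=0$ respectively; since these minimizers are distinct, the infimum defining $\mathrm{CVaR}_\alpha$ at the mixture $\tfrac12(\mathbb P_1+\mathbb P_2)$ is \emph{strictly} larger than $\tfrac12\mathrm{CVaR}_\alpha^{\mathbb P_1}(|\xi|)+\tfrac12\mathrm{CVaR}_\alpha^{\mathbb P_2}(|\xi|)=\varepsilon$, whence $\tfrac12(\mathbb P_1+\mathbb P_2)\notin\mathcal F_\varepsilon$. (Sanity check: for $\alpha=\tfrac12,\ \varepsilon=1,\ \delta=\tfrac14$ one gets $M=2$, $\mathbb P_2=\tfrac34\delta_0+\tfrac14\delta_2$, and the midpoint $\tfrac38\delta_0+\tfrac12\delta_1+\tfrac18\delta_2$ has $\mathrm{CVaR}_{1/2}(|\xi|)=\tfrac54>1$.) Thus $\mathcal F_\varepsilon$, hence the feasible set of $\Pi$ in \eqref{eq:main}, is nonconvex, so \eqref{eq:main} is a nonconvex optimization problem. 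I would close with one sentence noting this is not an artifact of $N=1$ or of CVaR: for $N\ge2$ one keeps the conditional laws given $\widehat\xi_i$ ($i\ge2$) equal to $\delta_{\widehat\xi_i}$ and varies only the law given $\widehat\xi_1$, so the transport-cost law is $\tfrac1N Z_1+\tfrac{N-1}N\delta_0$ and the same concavity/quantile argument applies once $\alpha>1-\tfrac1N$; and replacing $\mathrm{CVaR}_\alpha$ by an expectile $e_\alpha$, or any law-invariant coherent $\rho$ with an infimal/dual representation, again makes $\Pi\mapsto\rho^\Pi(\|\widehat\xi-\xi\|)$ concave, preserving the conclusion.

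The only real obstacle is conceptual, and it runs against first intuition. One is tempted to argue that since $\rho$ is subadditive and positively homogeneous it is convex, and therefore its ball is convex; but subadditivity and homogeneity make $\rho$ convex as a functional of \emph{random variables} on a common probability space, whereas a convex combination of \emph{distributions} is not a convex combination of random variables. Via the infimal representation, $\Pi\mapsto\rho^\Pi(\|\widehat\xi-\xi\|)$ in fact comes out \emph{concave}, so the Wasserstein ball is a sublevel set of a concave functional -- a ``wrong-way'' set. Once this is recognized, the remaining work is merely to choose the two witness distributions so that the minimizers in the infimal representation (equivalently, their $\alpha$-quantiles) differ, which is exactly what upgrades concavity to the strict violation used in the construction.
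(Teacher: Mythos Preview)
Your proof is correct and follows essentially the same route as the paper's: both choose $\rho=\mathrm{CVaR}_\alpha$ and pair a ``constant transport cost $\varepsilon$'' distribution with a ``concentrated transport cost'' distribution (mass $\delta$ at $\varepsilon(1-\alpha)/\delta$, which for the paper is $\delta=1/N$), then check the mixture violates $\mathrm{CVaR}_\alpha\le\varepsilon$. Your presentation differs in two useful ways: you simplify to $N=1$ (so $\Pi\leftrightarrow\mathbb P$ is affine and nothing is obscured by the coupling), and you replace the paper's direct calculation of $\mathrm{CVaR}_\alpha^{\Pi_\lambda}$ by the cleaner structural argument ``$\mathbb P\mapsto\mathrm{CVaR}_\alpha^{\mathbb P}$ is concave as an infimum of affine functionals, and the two witnesses have disjoint argmins in the Rockafellar--Uryasev representation, hence the inequality is strict.'' One small note: your closing remark needs $\alpha>1-1/N$ because you fix samples $i\ge2$, whereas the paper's construction moves \emph{all} samples in $\Pi_1$ and works under the complementary condition $1-\alpha>1/N$; both are fine, just different embeddings of the same one-dimensional phenomenon.
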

\begin{proof}
We show this by considering a representative class of metrics, CVaR-Wasserstein metrics.  For $\rho={\rm CVaR}_{\alpha}$ and $(1-\alpha)>1/N$, let  
$$
\Pi_1= \frac1N\sum_{i=1}^N \delta_{(\widehat{\xi}_i,\widehat{\xi}_i+\epsilon e)}~~~{\rm and}~~~\Pi_2=\frac1N\sum_{i=2}^{N} \delta_{(\widehat{\xi}_i,\widehat{\xi}_i)} +~\frac1N \delta_{\left(\widehat{\xi}_1,\widehat{\xi}_1+ N\epsilon e (1-\alpha)\right) },
$$
where $e\in\R^m$ satisfies $\|e\|=1$.  One can verify that ${\rm CVaR}_{\alpha}^{\Pi_{i}}(\|\widehat{\xi}-\xi\|)=\epsilon$, for $i=1,2$, that is, $\Pi_1$ and $\Pi_2$ are feasible solutions of the problem \eqref{eq:main}.
Denote by $\Pi_{\lambda} = (1-\lambda )\Pi_1 +\lambda\Pi_2$ for $\lambda\in (0,1)$. We have
$$
{\rm CVaR}_{\alpha}^{\Pi_{\lambda}}(\|\widehat{\xi}-\xi\|) =  \epsilon \frac{(1-\alpha)\lambda  + [1-\alpha- \lambda /N]  }{1-\alpha} = \epsilon   + \epsilon\lambda\left( 1-\frac{1}{N(1-\alpha)}  \right) >\epsilon,
$$
which means $\Pi_\lambda$ is not a feasible solution of the problem \eqref{eq:main}. We therefore conclude that  problem \eqref{eq:main} is a nonconvex optimization problem in general.
\end{proof}

The problem \eqref{eq:main} is thus an {\it infinite-dimensional nonconvex optimization problem} in general that is not amenable to convex analysis.  The non-convexity of \eqref{eq:main}, as highlighted in the above proposition, naturally arises from coherent risk measures $\rho^{\Pi}$ that are concave in distributions. These risk measures, such as CVaR and its extensions, could assign a higher risk value to a mixture distribution, i.e. a convex combination of two arbitrary distributions, reflecting the uncertainty resulting from the mixture. Because of the nonconvexity, the common strategy of analyzing the tractability of worst-case expectation problems, which starts from studying first their dual problems, is no longer applicable to \eqref{eq:main}.
In this paper, we show how to bypass this difficulty by analyzing the worst-case expectation problem \eqref{eq:main} first from a primal perspective, i.e. solving the problem \eqref{eq:main} directly. Our analysis reveals that somewhat surprisingly, the problem  \eqref{eq:main} in its most general form can always be reduced to a structurally simple finite-dimensional convex optimization problem.

\begin{theorem} \label{thm1}
If $\ell$ is concave, then the worst-case expectation problem \eqref{eq:main} is equivalent to
\begin{align} \label{eq:main-eq1-7}
\sup_{y_1,\ldots,y_N\in\R^m}~~ &  \frac1N\sum_{i=1}^N \ell(y_i) \\
{\rm subject~ to}~~&  \rho^{\Pi}(\|\widehat{\xi}-\xi\|)\leq \epsilon,~~  \Pi((\widehat{\xi},\xi)=(\widehat{\xi}_i,y_i))=\frac1N,~y_i\in\Xi,~i=1,\ldots,N.\nonumber
\end{align}
\end{theorem}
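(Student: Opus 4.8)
The plan is to recognize \eqref{eq:main-eq1-7} as exactly the restriction of the coupling version of \eqref{eq:main} to the subclass of couplings that transport each atom $\widehat\xi_i$ of $\widehat{\mathbb P}_N$ to a single point: a tuple $(y_1,\dots,y_N)$ feasible for \eqref{eq:main-eq1-7} corresponds to the degenerate coupling $\Pi=\frac1N\sum_{i=1}^N\delta_{(\widehat\xi_i,y_i)}$, which is feasible for \eqref{eq:main} with identical objective $\frac1N\sum_i\ell(y_i)$, so the inequality ``$\sup\eqref{eq:main-eq1-7}\le\sup\eqref{eq:main}$'' is immediate and the whole content is the reverse inequality. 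To obtain it, I would take an arbitrary feasible $\Pi$ for \eqref{eq:main} and, using that the first marginal $\widehat{\mathbb P}_N=\frac1N\sum_i\delta_{\widehat\xi_i}$ is finitely supported, disintegrate it as $\Pi=\frac1N\sum_{i=1}^N\delta_{\widehat\xi_i}\otimes\mathbb Q_i$ with $\mathbb Q_i\in\mathcal M(\Xi)$ the elementary conditional law of $\xi$ given $\widehat\xi=\widehat\xi_i$, and then show that collapsing each $\mathbb Q_i$ to its barycenter produces an at-least-as-good degenerate competitor.

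\textbf{The replacement and the objective bound.} Since a coherent risk measure dominates the expectation (immediate from the dual representation \eqref{eq:convex-kusuoka}, as any convex distortion $g$ with $g(0)=0,g(1)=1$ satisfies $g(u)\le u$), feasibility of $\Pi$ gives $\E^{\Pi}[\|\widehat\xi-\xi\|]\le\rho^{\Pi}(\|\widehat\xi-\xi\|)\le\epsilon$, so each $\mathbb Q_i$ has a finite mean $y_i:=\E^{\mathbb Q_i}[\xi]$, which lies in $\Xi$ by convexity of $\Xi$. Set $\Pi':=\frac1N\sum_{i=1}^N\delta_{(\widehat\xi_i,y_i)}$. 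Concavity of $\ell$ and Jensen's inequality give $\E^{\mathbb Q_i}[\ell(\xi)]\le\ell(y_i)$ for each $i$, hence $\E^{\Pi}[\ell(\xi)]=\frac1N\sum_i\E^{\mathbb Q_i}[\ell(\xi)]\le\frac1N\sum_i\ell(y_i)=\E^{\Pi'}[\ell(\xi)]$, so the replacement never decreases the objective.

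\textbf{The distance constraint (the main obstacle).} What remains, and what I expect to be the crux, is the inequality $\rho^{\Pi'}(\|\widehat\xi-\xi\|)\le\rho^{\Pi}(\|\widehat\xi-\xi\|)$, for which I would use law invariance of $\rho$ twice. Write $c_i:=\E^{\mathbb Q_i}[\|\widehat\xi_i-\xi\|]$ and let $Y$ be the random variable taking value $c_i$ with probability $1/N$. Under $\Pi$, the conditional expectation $\E[\,\|\widehat\xi-\xi\|\mid\widehat\xi\,]$ equals $c_i$ on $\{\widehat\xi=\widehat\xi_i\}$ and hence has the law of $Y$; conditional Jensen then gives $Y\lcx\|\widehat\xi-\xi\|$. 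A law-invariant coherent risk measure is monotone with respect to the convex order: each term $\int_0^1\mathrm{VaR}_u(\cdot)\,\d g(u)$ in \eqref{eq:convex-kusuoka} with $g$ convex is a spectral risk measure with nondecreasing spectrum $g'$, i.e. a mixture of $\mathrm{CVaR}_u$'s, and each $\mathrm{CVaR}_u$ is convex-order monotone, a property preserved under mixtures and under the supremum over $g\in\mathcal H_{\rho}$. Hence $\rho(Y)\le\rho^{\Pi}(\|\widehat\xi-\xi\|)$. For the second step, convexity of the norm yields $d_i:=\|\widehat\xi_i-y_i\|=\bigl\|\E^{\mathbb Q_i}[\widehat\xi_i-\xi]\bigr\|\le\E^{\mathbb Q_i}[\|\widehat\xi_i-\xi\|]=c_i$; since under $\Pi'$ the cost $\|\widehat\xi-\xi\|$ has the law of the variable taking value $d_i$ with probability $1/N$, coupling it atomwise with $Y$ and invoking monotonicity of $\rho$ gives $\rho^{\Pi'}(\|\widehat\xi-\xi\|)\le\rho(Y)$. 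Chaining, $\rho^{\Pi'}(\|\widehat\xi-\xi\|)\le\rho^{\Pi}(\|\widehat\xi-\xi\|)\le\epsilon$, so $(y_1,\dots,y_N)$ is feasible for \eqref{eq:main-eq1-7} with objective at least $\E^{\Pi}[\ell(\xi)]$. This yields ``$\sup\eqref{eq:main-eq1-7}\ge\sup\eqref{eq:main}$'', hence equality of the optimal values, and since the correspondences $\Pi\mapsto(y_i)$ and $(y_i)\mapsto\Pi'$ also transfer optimal solutions both ways, this is the claimed equivalence.
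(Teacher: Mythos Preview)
Your proof is correct and follows essentially the same approach as the paper: disintegrate the coupling along the atoms of $\widehat{\mathbb P}_N$, replace each conditional law by its barycenter, bound the objective by Jensen for the concave $\ell$, and show the new coupling still satisfies the $\rho$-constraint via law invariance and convex-order consistency. The only minor organizational difference is that the paper does the constraint step in one stroke---applying conditional Jensen to the jointly convex function $(x,y)\mapsto u(\|x-y\|)$ for arbitrary increasing convex $u$, thereby getting $\|\widehat\xi^*-\xi^*\|\preceq_{\rm icx}\|\widehat\xi-\xi\|$ directly, and then invoking a lemma that law-invariant coherent risk measures respect $\preceq_{\rm icx}$---whereas you split it into a convex-order step ($Y\lcx\|\widehat\xi-\xi\|$) followed by a pointwise monotonicity step ($d_i\le c_i$).
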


That is, the worst-case distribution to the problem \eqref{eq:main-eq1-7} always takes the form of
$$\Pi = \frac1N\sum_{i=1}^N \delta_{ (\widehat{\xi}_i,y_i^*) },~~~{\rm i.e.}~~~\mathbb P = \frac1N\sum_{i=1}^N \delta_{ y_i^* }, $$
where $y_i^*$, $i=1,...,N$, is the optimal solution to the problem \eqref{eq:main-eq1-7}. The theorem is general in that it does not rely on any assumption of the function form of $\rho$, other than the general property of coherent risk measures. The feasible set of $y_i$, $i=1,...,N$, is clearly a convex set, 
since  for any two feasible solutions $y_i^{(1)},  y_i^{(2)} \in \Xi$, $i=1,...,N$, and their convex combination $y_i^{(3)}=\lambda y_i^{(1)} + (1-\lambda)y_i^{(2)}$, $i=1,\ldots,N$, $\lambda\in [0,1],$ we have
\begin{align*}
 \rho(\|\widehat{\xi}-\xi_{y^{(3)}}\|) 
 &\le   \rho(\lambda\|\widehat{\xi}-  \xi_{y^{(1)}}\| +(1-\lambda)\|\widehat{\xi}-\xi_{y^{(2)}}\|) \le \lambda  \rho (\|\widehat{\xi}-\xi_{y^{(1)}}\|) + (1-\lambda)\rho (\|\widehat{\xi}-\xi_{y^{(2)}}\|)\le \epsilon,
\end{align*}
where  $\xi_{y}$ denotes a random variable such that $(\widehat{\xi},\xi_{y})$ has the distribution $\Pi_y =\frac1N\sum_{i=1}^N \delta_{(\widehat{\xi}_i,y_i)}$.
The theorem lays an important basis for studying the tractability of GW-DRO in general and can be extended further, as shown in the next section, to even more general class of loss functions $\ell$.

Theorem \ref{thm1} can be further exploited to identify the dual of the worst-case expectation problem \eqref{eq:main}. Namely, the convexity of the reduced problem \eqref{eq:main-eq1-7} renders it now amenable to analysis using convex duality. We show in the following how the dual problem can be obtained also as a finite-dimensional convex minimization problem.  This alternative formulation can be conveniently integrated with the outer minimization problem in \eqref{eq:wcprob} so to solve the overall problem of GW-DRO  \eqref{eq:wcprob} as a single convex minimization problem.
\begin{corollary} \label{dual_form}
If $\ell$ is concave, then the problem \eqref{eq:main} is equivalent to
\begin{align}
\inf_{\lambda, p, s, z_i, v_i} ~~& \lambda \epsilon + \frac{1}{N}\sum_{i=1}^N s_i & \nonumber \\
{\rm subject\;to} ~~&  [-\ell]^* (z_i - \nu_i) + \sigma_{\Xi}(\nu_i) - z_i^\top \widehat{\xi}_i \leq s_i, &  i=1,...,N,  \label{da1}\\
 &  \| z_i \|_{*} \leq p_i,     &i=1,...,N, \label{da2}\\
 &  \sum_{i=1}^N p_i = \lambda, & \label{da3} \\
 &  \frac{p}{\lambda}\in A_\rho, & \nonumber
\end{align}
where $\lambda \in \mathbb{R}, p\in \mathbb{R}^N, s \in \mathbb{R}^N, z_i\in \mathbb{R}^m, v_i\in \mathbb{R}^m$, $\sigma_{\Xi}$ is the support function of $\Xi$,
$A_\rho$ is a subset of a probability simplex, defined by
$$ A_\rho = \left\{y \in \R^N: \exists Z \sim \frac{1}{N}\sum_{i=1}^N \delta_{y_i},\; Z \in {\cal Z}_{\rho}   \right\},$$
and ${\cal Z}_{\rho}$ denotes the risk envelope of $\rho$, i.e.
$ \mathcal Z_\rho =\{ Z\ge 0: \E[Z]=1, \E[ZX]\le \rho(X)~{\rm for ~all}~X\}$. It is defined that $\frac{p}{0} \notin  A_{\rho}$ for any $p \neq {\bf 0}$ and $\frac{\bf{0}}{0} \in A_{\rho}$.
\end{corollary}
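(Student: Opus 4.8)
The plan is to obtain \eqref{da1}--\eqref{da3} as the Lagrangian/Fenchel dual of the \emph{finite-dimensional} convex program \eqref{eq:main-eq1-7} furnished by Theorem~\ref{thm1}. The only ingredient beyond textbook convex duality is feeding in the dual (risk-envelope) representation of the coherent risk measure $\rho$; note that the nonconvexity flagged in Proposition~\ref{prop:6-nonconvex} is irrelevant here precisely because Theorem~\ref{thm1} has already convexified the problem.

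First I would dualize the single constraint $\rho^{\Pi}(\|\widehat\xi-\xi\|)\le\epsilon$ in \eqref{eq:main-eq1-7} with a multiplier $\lambda\ge 0$. Strong Lagrangian duality holds since $y_i=\widehat\xi_i$ for all $i$ is strictly feasible (it gives $\rho^{\Pi}(0)=0<\epsilon$ when $\epsilon>0$), so Slater's condition is met; the value of \eqref{eq:main} therefore equals $\inf_{\lambda\ge 0}\bigl[\lambda\epsilon+\sup_{y\in\Xi^{N}}\bigl(\tfrac1N\sum_i\ell(y_i)-\lambda\,\rho^{\Pi}(\|\widehat\xi-\xi\|)\bigr)\bigr]$. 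Next I would invoke the dual representation of a law-invariant coherent risk measure: for a $\Pi$ supported on the $N$ equally likely atoms $(\widehat\xi_i,y_i)$, the risk envelope $\mathcal Z_\rho$ expresses $\rho^{\Pi}(\|\widehat\xi-\xi\|)$ as the maximum of $\tfrac1N\sum_i q_i\|\widehat\xi_i-y_i\|$ over the admissible dual weight vectors $q$, i.e.\ exactly over $A_\rho$ (an optimal $q$ may be taken comonotone with $(\|\widehat\xi_i-y_i\|)_i$, and the permutation-invariance of $A_\rho$ means this imposes no restriction). Substituting, the inner problem becomes $\sup_{y\in\Xi^{N}}\min_{q\in A_\rho}\bigl[\tfrac1N\sum_i\ell(y_i)-\tfrac{\lambda}{N}\sum_i q_i\|\widehat\xi_i-y_i\|\bigr]$, and I would apply Sion's minimax theorem — the bracketed objective is affine (hence convex) in $q$ over the convex compact set $A_\rho$, and concave in $y$ under mild regularity such as upper semicontinuity of $\ell$ — to exchange $\sup_y$ and $\min_q$.

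For fixed $\lambda$ and $q$, the problem separates across $i$ into $\sup_{y_i\in\Xi}\bigl[\ell(y_i)-p_i\|\widehat\xi_i-y_i\|\bigr]$ with $p_i:=\lambda q_i\ge 0$. I would write $p_i\|\widehat\xi_i-y_i\|=\sup_{\|z_i\|_{*}\le p_i}z_i^{\top}(\widehat\xi_i-y_i)$, exchange the inner $\sup_{y_i}$ with $\inf_{\|z_i\|_{*}\le p_i}$ (Sion again, the dual-norm ball being convex compact), and recognize the remaining $\sup_{y_i\in\Xi}\bigl(\ell(y_i)+z_i^{\top}y_i\bigr)$ as the conjugate of $(-\ell)+\delta_{\Xi}$, which by the infimal-convolution rule equals $\inf_{\nu_i}\bigl([-\ell]^{*}(z_i-\nu_i)+\sigma_{\Xi}(\nu_i)\bigr)$. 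Hence $\sup_{y_i\in\Xi}[\ell(y_i)-p_i\|\widehat\xi_i-y_i\|]=\inf_{\|z_i\|_{*}\le p_i,\ \nu_i}\bigl([-\ell]^{*}(z_i-\nu_i)+\sigma_{\Xi}(\nu_i)-z_i^{\top}\widehat\xi_i\bigr)$. Introducing epigraph variables $s_i$ for these inner infima, setting $p=\lambda q$ so that $p/\lambda\in A_\rho$ and $\|z_i\|_{*}\le p_i$, and re-assembling the outer $\inf_{\lambda\ge 0}[\lambda\epsilon+\tfrac1N\sum_i s_i]$ yields exactly \eqref{da1}--\eqref{da3}; the degenerate case $\lambda=0$ is handled by the stated conventions $\tfrac{p}{0}\notin A_\rho$ for $p\ne\mathbf 0$ and $\tfrac{\mathbf 0}{0}\in A_\rho$, which force $p=z_i=\mathbf 0$ and correctly recover $\sup_{y\in\Xi^{N}}\tfrac1N\sum_i\ell(y_i)$.

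The step I expect to be the main obstacle is the interface between the risk measure and the finite-dimensional geometry: rigorously justifying that $\rho^{\Pi}(\|\widehat\xi-\xi\|)$ equals the maximum over $A_\rho$ of $\tfrac1N\sum_i q_i\|\widehat\xi_i-y_i\|$ (handling the comonotone choice of dual weights and the normalization baked into $A_\rho$ and $\mathcal Z_\rho$), showing $A_\rho$ is convex and compact so the first minimax exchange is valid, and verifying the regularity needed for strong Lagrangian duality and for both applications of Sion's theorem (upper semicontinuity of $\ell$, closedness of $\Xi$, $\epsilon>0$). With these in hand the remainder is routine Fenchel bookkeeping; as a by-product, specializing $\rho=\E$ and $\Xi=\R^{m}$ recovers the known dual form for type-1 Wasserstein DRO.
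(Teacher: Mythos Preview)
Your proposal is correct and follows essentially the same route as the paper: start from Theorem~\ref{thm1}, represent $\rho$ on the $N$-atom space via its risk envelope (the set $A_\rho$), dualize the single constraint with $\lambda\ge 0$, swap $\sup_y$ and $\inf_{q\in A_\rho}$ by Sion, separate across $i$, and reduce each inner $\sup_{y_i\in\Xi}$ via Fenchel/conjugate calculus. The paper's extra work is precisely the step you flag as the obstacle---showing that on the discrete space one may replace $\mathcal Z_\rho$ by $A_\rho$ (it argues this by a conditional-expectation/Jensen reduction rather than your comonotonicity/permutation-invariance sketch)---and its handling of the degenerate cases $\epsilon=0$ and $\lambda=0$ matches your stated conventions.
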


We now demonstrate how Corollary \ref{dual_form} can be applied to solve GW-DRO problems when the ambiguity set $\mathbb{B}(\widehat{\mathbb{P}}_{N})$ in \eqref{eq:wcprob} takes either the form of CVaR-Wasserstein ambiguity ball $\mathbb{B}_{(1),\varepsilon}^{\alpha}\left(\widehat{\mathbb{P}}_{N}\right)$ or
expectile-Wasserstein ball $\mathbb{B}_{(2),\varepsilon}^{\alpha}\left(\widehat{\mathbb{P}}_{N}\right)$.

\begin{example} (CVaR-Wasserstein) Note that ${\rm CVaR}_\alpha$ can be represented as (Theorem 4.52 of \cite{FS16})
\[
{\rm CVaR}_\alpha(X)  = \sup_{ Z \in \mathcal Z_{\rm CVaR_\alpha}} \E [ZX]
~~~{\rm with}~~\mathcal Z_{\rm CVaR_\alpha}=\left\{Z\ge 0:\E[Z]=1, Z\le \frac1{1-\alpha}\right\}.
\]
Hence, we have   $A_{\rm CVaR_\alpha}=\{y\in\R^N_+:\sum_{i=1}^Ny_i=1,  y_i\le 1/(1-\alpha)\}$.
Thus, following Corollary \ref{dual_form}, the problem \eqref{eq:wcprob} with $\mathbb{B}(\widehat{\mathbb{P}}_{N}) = \mathbb{B}_{(1),\varepsilon}^{\alpha}\left(\widehat{\mathbb{P}}_{N}\right)$ can be solved by
\begin{align*}
\inf_{\lambda, p, s, z_i, v_i}~~ &  \lambda \epsilon + \frac{1}{N}\sum_{i=1}^N s_i \\
{\rm subject~ to}~~& \eqref{da1}- \eqref{da3},\; \;\; p_i \leq \frac{\lambda}{1-\alpha},\;\; i=1,...,N.
\end{align*}
\end{example}
\begin{example} (Expectile-Wasserstein)
 Note that expectile $e_\alpha$  can be represented as    (Proposition 8 of \cite{BKMG14})
\[
e_\alpha(X) = \sup_{Z\in \mathcal Z_{e_\alpha}}\E[XZ]~~~{\rm with}~~\mathcal Z_{  e_\alpha}=\left\{Z\ge 0:\E[Z]=1, \frac{\esssup Z}{\essinf Z}\le \frac\alpha{1-\alpha}\right\}.
\]
%
%
Hence, we have  $A_{e_\alpha}=\{y\in\R^N_+:\sum_{i=1}^Ny_i=1,  \max y/\min y\le \alpha/(1-\alpha)\}$. Thus, following Corollary \ref{dual_form}, the problem \eqref{eq:wcprob} with $\mathbb{B}(\widehat{\mathbb{P}}_{N}) = \mathbb{B}_{(2),\varepsilon}^{\alpha}\left(\widehat{\mathbb{P}}_{N}\right)$ can be solved by
\begin{align*}
\inf_{\lambda, p, s, z_i, v_i}~~ &  \lambda \epsilon + \frac{1}{N}\sum_{i=1}^N s_i,  \\
{\rm subject~ to}~~& \eqref{da1}- \eqref{da3},\; \; \; p_i \leq \frac{\alpha}{1-\alpha} p_j,\;\; i, j=1,...,N. \\
\end{align*}
\end{example}

\section{Solving GW-DRO with convex loss functions}\label{sec:4convex}
Similar to the analysis presented in the previous section, we study the case of convex loss functions by investigating first how the worst-case expectation problem may be solved from a primal perspective. We show below that under the piecewise linear assumption, the worst-case expectation problem can also be reduced to a finite-dimensional optimization problem for any ambiguity set defined based on coherent Wasserstein metrics.
\begin{theorem}\label{th:main-2}
In the case where the loss function $\ell$ is convex piecewise linear, i.e. $\ell=\max_{k=1,\ldots,K} \ell_k$, where $\ell_k$, $k=1,\ldots,K$, are linear loss functions, the worst-case expectation problem
\eqref{eq:main} is equivalent to
\begin{align} \label{eq:gene-2-31}
\sup_{p_{ij},\xi_{ij}}~~ &\frac1N\sum_{i=1}^N\sum_{j=1}^K p_{ij} \ell_j(\xi_{ij}) \\
{\rm subject~ to}~~& \rho^{\Pi}(\|\widehat{\xi}-\xi\|) \le \epsilon,\label{keyeq}\\
& \Pi((\widehat{\xi},\xi)= (\widehat{\xi}_i,\xi_{ij}))=p_{ij}\ge 0, ~\xi_{ij}\in \Xi,~\forall~ i,j,\nonumber\\
&  \sum_{j=1}^Kp_{ij} =1,~~i=1,\ldots,N.\nonumber
\end{align}
\end{theorem}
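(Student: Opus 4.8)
The plan is to follow the same primal strategy as in the proof of Theorem \ref{thm1}. The only new feature is that a convex piecewise-linear loss $\ell=\max_{k=1,\dots,K}\ell_k$ prevents us from collapsing the conditional mass at each atom $\widehat\xi_i$ onto a single barycenter; instead, we will split it into at most $K$ points, one per linear piece. First I would fix an arbitrary feasible $\Pi$ for \eqref{eq:main}. Since its first marginal is $\widehat{\mathbb P}_N$, disintegrate it as $\Pi=\frac1N\sum_{i=1}^N\delta_{\widehat\xi_i}\otimes\mu_i$ for conditional laws $\mu_i\in\mathcal M(\Xi)$. As in Theorem \ref{thm1}, feasibility forces $\mathbb E^{\Pi}[\|\widehat\xi-\xi\|]\le\rho^{\Pi}(\|\widehat\xi-\xi\|)\le\epsilon<\infty$ (using $\rho\ge\mathbb E$ for law-invariant coherent $\rho$), so every $\mu_i$ has finite first moment and the conditional barycenters below are well defined and lie in the convex set $\Xi$.

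Second, partition $\Xi$ into Borel sets $\Xi_1,\dots,\Xi_K$ according to which linear piece attains the maximum (breaking ties by smallest index), so that $\ell\equiv\ell_j$ on $\Xi_j$. For each $i$, set $p_{ij}=\mu_i(\Xi_j)$ and, when $p_{ij}>0$, $\xi_{ij}=\mathbb E^{\mu_i}[\xi\mid\Xi_j]\in\Xi$ (choosing $\xi_{ij}\in\Xi$ arbitrarily when $p_{ij}=0$). Affinity of each $\ell_j$ gives $\int_{\Xi_j}\ell\,\d\mu_i=\int_{\Xi_j}\ell_j\,\d\mu_i=p_{ij}\ell_j(\xi_{ij})$, hence $\mathbb E^{\mu_i}[\ell(\xi)]=\sum_{j=1}^K p_{ij}\ell_j(\xi_{ij})$; averaging over $i$ reproduces exactly the objective of \eqref{eq:gene-2-31}, while the constraints $p_{ij}\ge0$, $\sum_{j=1}^K p_{ij}=1$ and $\xi_{ij}\in\Xi$ hold by construction.

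Third --- the step I expect to carry the real weight --- I would check that the new coupling $\Pi'=\frac1N\sum_{i=1}^N\sum_{j=1}^K p_{ij}\,\delta_{(\widehat\xi_i,\xi_{ij})}$ still satisfies \eqref{keyeq}, i.e.\ $\rho^{\Pi'}(\|\widehat\xi-\xi\|)\le\epsilon$. Let $T$ denote the transportation cost under $\Pi$ and let $\mathcal G$ be the $\sigma$-field generated by the index pair $(i,j)$. On the atom $(i,j)$, Jensen's inequality for the norm yields $\|\widehat\xi_i-\xi_{ij}\|=\|\mathbb E^{\mu_i}[\widehat\xi_i-\xi\mid\Xi_j]\|\le\mathbb E^{\mu_i}[\|\widehat\xi_i-\xi\|\mid\Xi_j]=\mathbb E^{\Pi}[T\mid\mathcal G]$, so the transportation cost $T'$ under $\Pi'$ obeys $T'\le\mathbb E^{\Pi}[T\mid\mathcal G]$ under the natural coupling; by monotonicity of $\rho$, $\rho^{\Pi'}(\|\widehat\xi-\xi\|)=\rho(T')\le\rho(\mathbb E^{\Pi}[T\mid\mathcal G])$. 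Conditioning only shrinks the convex order, so $\mathbb E^{\Pi}[T\mid\mathcal G]\lcx T$; and a law-invariant coherent risk measure is consistent with the convex order (it admits a Kusuoka representation \eqref{eq:convex-kusuoka} with convex distortions, and every spectral risk measure respects $\lcx$; see \cite{FS16}), hence $\rho(\mathbb E^{\Pi}[T\mid\mathcal G])\le\rho(T)=\rho^{\Pi}(\|\widehat\xi-\xi\|)\le\epsilon$. Thus every feasible $\Pi$ for \eqref{eq:main} produces a feasible point of \eqref{eq:gene-2-31} with the same objective value, so the optimal value of \eqref{eq:main} is at most that of \eqref{eq:gene-2-31}.

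The reverse inequality is immediate: any feasible $(p_{ij},\xi_{ij})$ for \eqref{eq:gene-2-31} defines $\Pi=\frac1N\sum_{i,j}p_{ij}\,\delta_{(\widehat\xi_i,\xi_{ij})}$, which has first marginal $\widehat{\mathbb P}_N$, satisfies $\rho^{\Pi}(\|\widehat\xi-\xi\|)\le\epsilon$ by \eqref{keyeq}, and has objective $\frac1N\sum_{i,j}p_{ij}\ell(\xi_{ij})\ge\frac1N\sum_{i,j}p_{ij}\ell_j(\xi_{ij})$ because $\ell=\max_k\ell_k\ge\ell_j$. The two inequalities together give equality of the optimal values; moreover, since the inequality just used must be tight at an optimum, an optimizer of \eqref{eq:gene-2-31} has $\ell(\xi_{ij})=\ell_j(\xi_{ij})$ whenever $p_{ij}>0$ and therefore induces a genuine worst-case distribution $\mathbb P=\frac1N\sum_{i,j}p_{ij}\,\delta_{\xi_{ij}}$ for \eqref{eq:main}. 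The only genuinely delicate point is the convex-order estimate in the third step --- verifying that splitting the conditional mass does not inflate the $\rho$-value of the transportation cost --- the rest being the same bookkeeping as in Theorem \ref{thm1}, now carried out piece by piece.
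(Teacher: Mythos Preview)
Your proposal is correct and follows essentially the same approach as the paper: partition $\Xi$ by the active linear piece, replace each conditional law by its barycenter on every piece, verify feasibility of the new discrete coupling via Jensen together with the consistency of a law-invariant coherent $\rho$ with the (increasing) convex order, and handle the reverse inequality via $\ell\ge\ell_j$. The only cosmetic difference is that the paper routes the sandwich through an intermediate problem with $\ell$ (rather than $\ell_j$) in the objective, whereas you go directly; the construction and the key estimate are identical.
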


The above result reveals the feasibility of solving the problem also as a finite-dimensional optimization problem for any ambiguity sets defined based on coherent Wasserstein metrics. The optimization problem
\eqref{eq:gene-2-31}, as it stands, differs from the finite-dimensional optimization problem \eqref{eq:main-eq1-7} presented in the previous section in that it further requires determining the probability $p_{ij}$ for each support $\xi_{ij}$, potentially rendering the problem \eqref{eq:gene-2-31} nonconvex. The tractability of the optimization problem \eqref{eq:gene-2-31} now depends more heavily on the exact specification of $\rho$ and needs to be studied on a case-by-case basis. In the remainder of this section, we will focus on studying the worst-case expectation problem \eqref{eq:main} in greater detail for CVaR-Wasserstien ambiguity sets $\mathbb{B}_{(1),\varepsilon}^{\alpha}\left(\widehat{\mathbb{P}}_{N}\right)$ and expectile-Wasserstein ambiguity sets $\mathbb{B}_{(2),\varepsilon}^{\alpha}\left(\widehat{\mathbb{P}}_{N}\right)$.

\subsection{CVaR-Wasserstien ambiguity sets}
By the well-known representation of CVaR (\cite{RU02}), ${\rm CVaR}_\alpha (X) =\inf_{t} \{t+\frac1{1-\alpha} \E[(X-t)_+]$, the problem \eqref{eq:gene-2-31} can be explicitly written as
\begin{align}\label{eq:gene-4}
\sup_{t,p_{ij},\xi_{ij}}~~ &\frac1N\sum_{i=1}^N\sum_{j=1}^K p_{ij} \ell_j(\xi_{ij}) \\
{\rm subject~ to}~~& t+\frac1{1-\alpha}\frac1N\sum_{i=1}^N\sum_{j=1}^K p_{ij} (\|\xi_{ij} -\widehat{\xi}_i\|-t)_+\leq \epsilon,\nonumber\\
& \sum_{j=1}^Kp_{ij} =1,~~i=1,\ldots,N,~~p_{ij}\ge 0,  ~\xi_{ij}\in \Xi,~\forall~ i,j.\nonumber
\end{align}
The above problem is complicated by the need to handle the $\alpha$-quantile variable $t$, a source of nonconvexity to the problem. Despite this nonconvexity, we show in this section that the above problem admits a more tractable reformulation in the case where the support set  $\Xi = \mathbb{R}^m$. The reformulation not only enables us to demonstrate the tractability of the worst-case expectation problem $\sup _{\mathbb{P} \in \mathbb{B}_{(1),\varepsilon}^{\alpha} (\widehat{\mathbb{P}}_{N} )} \mathbb{E}^{\mathbb{P}}[\ell(\xi)]$ for any Lipschitz continuous convex function $\ell$, but also reveals a deep connection between
the worst-case expectation problem $\sup _{\mathbb{P} \in \mathbb{B}_{(1),\varepsilon}^{\alpha} (\widehat{\mathbb{P}}_{N} )} \mathbb{E}^{\mathbb{P}}[\ell(\xi)]$ and the worst-case expectation problem formulated based on Wasserstein ambiguity sets, i.e. $\sup _{\mathbb{P} \in \mathbb{B}^{\rm W}_{\varepsilon} (\widehat{\mathbb{P}}_{N} )} \mathbb{E}^{\mathbb{P}}[\ell(\xi)]$.


\begin{theorem} \label{main_cvx}
In the case where the loss function $\ell$ is a convex function satisfying $$L:=\max_{x\in\R^m}\|\partial \ell(x)\|_*<\infty$$ and $\Xi = \mathbb{R}^m$, where $\|\cdot\|_*$ is the dual norm of $\|\cdot\|$, the worst-case expectation problem
\begin{equation} \label{wcvar}
\sup _{\mathbb{P} \in \mathbb{B}_{(1),\varepsilon}^{\alpha} (\widehat{\mathbb{P}}_{N} )} \mathbb{E}^{\mathbb{P}}[\ell(\xi)]
\end{equation}
is equivalent to
\begin{equation} \label{connection}
\max \left\{\sup _{\mathbb{P} \in \mathbb{B}_{\varepsilon}^{{\rm wc}} (\widehat{\mathbb{P}}_{N} )} \mathbb{E}^{\mathbb{P}}[\ell(\xi)], \sup _{\mathbb{P} \in \mathbb{B}^{\rm W}_{(1-\alpha)\varepsilon} (\widehat{\mathbb{P}}_{N} )} \mathbb{E}^{\mathbb{P}}[\ell(\xi)] \right\},
\end{equation}
where
\begin{equation} \label{worst_exp}
\sup _{\mathbb{P} \in \mathbb{B}_{\varepsilon}^{{\rm wc}} (\widehat{\mathbb{P}}_{N} )} \mathbb{E}^{\mathbb{P}}[\ell(\xi)] = \frac1N\sum_{i=1}^N  \max_{e: \|e\|=1} \ell(\widehat{\xi}_i+\epsilon e),
\end{equation}
and
\begin{equation} \label{wass_exp}
\sup _{\mathbb{P} \in \mathbb{B}^{\rm W}_{(1-\alpha)\varepsilon} (\widehat{\mathbb{P}}_{N} )} \mathbb{E}^{\mathbb{P}}[\ell(\xi)]
 = \frac1N\sum_{i=1}^N \ell(\widehat{\xi}_i) +  L(1-\alpha)\epsilon.
\end{equation}
\end{theorem}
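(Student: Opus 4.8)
The plan is to use the variational (Rockafellar--Uryasev) representation of CVaR to strip off the quantile level, thereby reducing \eqref{wcvar} to a one-parameter family of ordinary linearly-constrained transport problems, and then to show that only the two extreme parameter values can be optimal. Concretely, I would work with the joint-distribution form of the worst-case problem, $\sup_\Pi\{\E^\Pi[\ell(\xi)]:{\rm CVaR}_\alpha^\Pi(\|\widehat{\xi}-\xi\|)\le\epsilon,\ \Pi\in\Pi(\widehat{\mathbb{P}}_N,\cdot)\}$, and disintegrate any such coupling as $\Pi=\frac1N\sum_{i=1}^N\delta_{\widehat{\xi}_i}\otimes\mathbb{Q}_i$, so that $\E^\Pi[\ell(\xi)]=\frac1N\sum_i\E^{\mathbb{Q}_i}[\ell(\xi)]$. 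Using law-invariance of CVaR and ${\rm CVaR}_\alpha(X)=\inf_t\{t+\frac1{1-\alpha}\E[(X-t)_+]\}$, the constraint becomes: there exists $t$ with $t+\frac1{(1-\alpha)N}\sum_i\E^{\mathbb{Q}_i}[(\|\widehat{\xi}_i-\xi\|-t)_+]\le\epsilon$. Since $\|\widehat{\xi}_i-\xi\|\ge0$, this function of $t$ is $\ge t$ and is non-increasing for $t<0$, so the quantifier may be restricted to $t\in[0,\epsilon]$. Pulling ``$\exists t$'' outside the feasible set, \eqref{wcvar} equals $\sup_{t\in[0,\epsilon]}P(t)$, where
$$P(t):=\sup_{\{\mathbb{Q}_i\}}\Big\{\tfrac1N\sum_i\E^{\mathbb{Q}_i}[\ell(\xi)]:\tfrac1N\sum_i\E^{\mathbb{Q}_i}[(\|\widehat{\xi}_i-\xi\|-t)_+]\le(1-\alpha)(\epsilon-t)\Big\}.$$

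Next I would evaluate the two endpoint slices. At $t=\epsilon$ the constraint forces ${\rm supp}(\mathbb{Q}_i)\subseteq\{\xi:\|\xi-\widehat{\xi}_i\|\le\epsilon\}$; since $\ell$ is convex, its supremum over such a ball equals its supremum over the sphere and is attained by a Dirac mass, giving $P(\epsilon)=\frac1N\sum_i\max_{\|e\|=1}\ell(\widehat{\xi}_i+\epsilon e)$, which (using that for $\Xi=\R^m$ the type-$\infty$ ball around $\widehat{\mathbb{P}}_N$ is exactly the set of $\frac1N\sum_i\mathbb{Q}_i$ with $\mathbb{Q}_i$ supported on those balls) is $\sup_{\mathbb{P}\in\mathbb{B}_\epsilon^{\rm wc}(\widehat{\mathbb{P}}_N)}\E^{\mathbb{P}}[\ell(\xi)]$, establishing \eqref{worst_exp}. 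At $t=0$ the constraint reads $\frac1N\sum_i\E^{\mathbb{Q}_i}[\|\widehat{\xi}_i-\xi\|]\le(1-\alpha)\epsilon$, so $\mathbb{P}=\frac1N\sum_i\mathbb{Q}_i$ ranges over $\mathbb{B}^{\rm W}_{(1-\alpha)\epsilon}(\widehat{\mathbb{P}}_N)$; the classical type-1 Wasserstein worst-case identity for Lipschitz losses on $\R^m$ (\cite{EK18}, \cite{GK22}), or a direct construction sending a vanishing fraction of mass to infinity along a steepest-ascent direction of $\ell$, gives $P(0)=\frac1N\sum_i\ell(\widehat{\xi}_i)+L(1-\alpha)\epsilon$, establishing \eqref{wass_exp}.

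It then remains to show that no interior level does better, i.e. $P(t)\le\max\{P(0),P(\epsilon)\}$ for all $t\in[0,\epsilon]$. Here I would use weak Lagrangian duality for $P(t)$ with the single multiplier $L$: for any feasible $\{\mathbb{Q}_i\}$,
$$\tfrac1N\sum_i\E^{\mathbb{Q}_i}[\ell(\xi)]\le\tfrac1N\sum_i\sup_{\xi\in\R^m}\big(\ell(\xi)-L(\|\widehat{\xi}_i-\xi\|-t)_+\big)+L(1-\alpha)(\epsilon-t).$$
The crucial computation is that, because $\ell$ is convex and hence $L$-Lipschitz with respect to $\|\cdot\|$, $\sup_{\xi}\big(\ell(\xi)-L(\|\widehat{\xi}_i-\xi\|-t)_+\big)=\max_{\|e\|=1}\ell(\widehat{\xi}_i+te)=:\phi_i(t)$: on $\{\|\xi-\widehat{\xi}_i\|\le t\}$ the penalty vanishes and convexity puts the maximizer on the sphere, while for $\|\xi-\widehat{\xi}_i\|=r>t$ the Lipschitz bound $\ell(\widehat{\xi}_i+re)\le\ell(\widehat{\xi}_i+te)+L(r-t)$ exactly absorbs the penalty. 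Hence $P(t)\le\psi(t):=\frac1N\sum_i\phi_i(t)+L(1-\alpha)(\epsilon-t)$. Each $\phi_i(t)=\sup_{\|w\|\le1}\ell(\widehat{\xi}_i+tw)$ is a supremum of convex functions of $t$, so $\psi$ is convex on $[0,\epsilon]$ and attains its maximum there at an endpoint; moreover $\psi(0)=\frac1N\sum_i\ell(\widehat{\xi}_i)+L(1-\alpha)\epsilon=P(0)$ and $\psi(\epsilon)=\frac1N\sum_i\phi_i(\epsilon)=P(\epsilon)$. Therefore $\sup_{t\in[0,\epsilon]}P(t)\le\sup_{t\in[0,\epsilon]}\psi(t)=\max\{\psi(0),\psi(\epsilon)\}=\max\{P(0),P(\epsilon)\}\le\sup_{t\in[0,\epsilon]}P(t)$, so every inequality is an equality and \eqref{wcvar} $=\max\{P(0),P(\epsilon)\}$, which by the previous step is exactly the right-hand side of \eqref{connection}.

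The hard part will be the bookkeeping in the reduction step and the attainment issues specific to $\Xi=\R^m$: justifying that the supremum over couplings may legitimately be taken slice-by-slice in the quantile level $t$, and that the two endpoint identifications \eqref{wass_exp} and \eqref{worst_exp} are correct even though their suprema may only be approached (by Dirac masses driven to infinity, or on the sphere). The duality estimate $P(t)\le\psi(t)$, the convexity of $\psi$, and the endpoint evaluations are then routine once the convex/$L$-Lipschitz lemma for $\sup_\xi(\ell(\xi)-L(\|\widehat{\xi}_i-\xi\|-t)_+)$ is in hand.
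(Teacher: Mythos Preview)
Your proposal is correct and takes a genuinely different, in several respects cleaner, route than the paper. The paper proves the result first for piecewise linear $\ell$ by invoking the finite-dimensional reduction of Theorem~\ref{th:main-2}, then carries out a chain of primal manipulations on the variables $(t,p_{ij},y_{ij})$ (optimizing directions $\widehat{x}_j$, pushing excess mass onto the index with largest $\|a_j\|_*$, averaging the $x_{iK}$ across samples, etc.) until the problem collapses to the one-dimensional expression $\sup_{t\in[0,\epsilon]}\big\{\frac1N\sum_i\max_j(\|a_j\|_*t+z_{ij})-\|a_K\|_*(1-\alpha)t+\|a_K\|_*(1-\alpha)\epsilon\big\}$; this is convex in $t$, so the optimal $t\in\{0,\epsilon\}$, and general convex $\ell$ is obtained at the end by sandwiching between piecewise linear approximants. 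You instead parametrize directly by the quantile level $t$ via the Rockafellar--Uryasev formula, identify the two slice problems $P(0)$ and $P(\epsilon)$ with the type-$1$ and type-$\infty$ Wasserstein problems appearing in \eqref{connection}, and bound every slice by a single weak-duality inequality with the specific multiplier $\lambda=L$; the resulting envelope $\psi(t)=\frac1N\sum_i\max_{\|e\|\le1}\ell(\widehat{\xi}_i+te)+L(1-\alpha)(\epsilon-t)$ is convex on $[0,\epsilon]$ and coincides with $P$ at the endpoints, which closes the argument. Interestingly, for piecewise linear $\ell$ your $\psi(t)$ is exactly the one-dimensional expression the paper derives, so both proofs converge on the same convex function of $t$; the difference is that the paper reaches it by primal reduction (effectively establishing strong duality through Theorem~\ref{th:main-2}) while you reach it by a one-line weak-duality bound plus endpoint matching. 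Your argument is shorter and handles general convex Lipschitz $\ell$ in one pass without the piecewise-linear detour; the paper's primal route, on the other hand, delivers the explicit structure of the (asymptotic) worst-case distributions in each regime as a by-product, which your dual-bound argument does not immediately provide.
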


The above result is striking for several reasons. First, it reveals via \eqref{connection} an elegant connection between the worst-case expectation problem \eqref{wcvar} and two other worst-case expectation problems, one formulated based on the worst-case ambiguity set $\mathbb{B}_{\varepsilon}^{{\rm wc}}\left(\widehat{\mathbb{P}}_{N}\right)$ and the other formulated based on the classical Wasserstein ambiguity set $\mathbb{B}^{\rm W}_{(1-\alpha)\varepsilon} (\widehat{\mathbb{P}}_{N} )$ with the radius $\epsilon$ scaled by $1-\alpha$. These two latter problems can be solved respectively by a structurally simpler maximization problem \eqref{worst_exp} and in closed-form \eqref{wass_exp}. Second, \eqref{connection} is surprising because it implies that there is no loss of generality to reduce the set $\mathbb{B}_{(1),\varepsilon}^{\alpha} (\widehat{\mathbb{P}}_{N} )$ to the set
\begin{equation} \label{reducedset}
\overline{\mathbb{B}}_{\epsilon}^{\alpha}:=\mathbb{B}^{\rm W}_{(1-\alpha)\varepsilon}  (\widehat{\mathbb{P}}_{N} )\cup  \mathbb{B}_{\varepsilon}^{{\rm wc}} (\widehat{\mathbb{P}}_{N} )
\end{equation}
for solving the worst-case expectation problem. The latter is, however, a considerably smaller set of the former. To see this, note that the following set inclusion relationships follow straightforwardly the fact that $\mathbb{E}[\xi] \leq {\rm CVaR}_{\alpha}[\xi] \leq \frac{1}{(1-\alpha)}\mathbb{E}[\xi]$ for any nonnegative random variable $\xi$ and $\alpha \in (0,1)$.
\begin{equation} \label{setinclusion}
\mathbb{B}^{\rm W}_{(1-\alpha)\varepsilon} (\widehat{\mathbb{P}}_{N} )\subset \mathbb{B}_{(1),\varepsilon}^{\alpha} (\widehat{\mathbb{P}}_{N} ) \subset \mathbb{B}^{\rm W}_{\varepsilon}(\widehat{\mathbb{P}}_{N} ).
\end{equation}
As $\alpha$ increases, the set $\mathbb{B}^{\rm W}_{(1-\alpha)\varepsilon} (\widehat{\mathbb{P}}_{N})$ converges towards $\widehat{\mathbb{P}}_{N}$ and thus is significantly smaller than $\mathbb{B}_{(1),\varepsilon}^{\alpha} (\widehat{\mathbb{P}}_{N} )$.
The inclusion relationship $\mathbb{B}_{\varepsilon}^{{\rm wc}} (\widehat{\mathbb{P}}_{N}) \subset  \mathbb{B}^{\alpha}_{(1),\varepsilon}(\widehat{\mathbb{P}}_{N} ) $ trivially holds, and the set $\mathbb{B}_{\varepsilon}^{{\rm wc}} (\widehat{\mathbb{P}}_{N})$ is considerably smaller than $\mathbb{B}^{\alpha}_{(1),\varepsilon}(\widehat{\mathbb{P}}_{N} )$ in that $\mathbb{B}_{\varepsilon}^{{\rm wc}} (\widehat{\mathbb{P}}_{N})$ contains only distributions whose support is uniformly bounded from the support of $\widehat{\mathbb{P}}_{N}$ by $\epsilon$. Figure \ref{setrelation} demonstrates these inclusion relationships and highlights the considerable reduction from $\mathbb{B}^{\alpha}_{(1),\varepsilon}(\widehat{\mathbb{P}}_{N} )$ to $\mathbb{B}^{\rm W}_{(1-\alpha)\varepsilon}  (\widehat{\mathbb{P}}_{N} )\cup  \mathbb{B}_{\varepsilon}^{{\rm wc}} (\widehat{\mathbb{P}}_{N} )$.


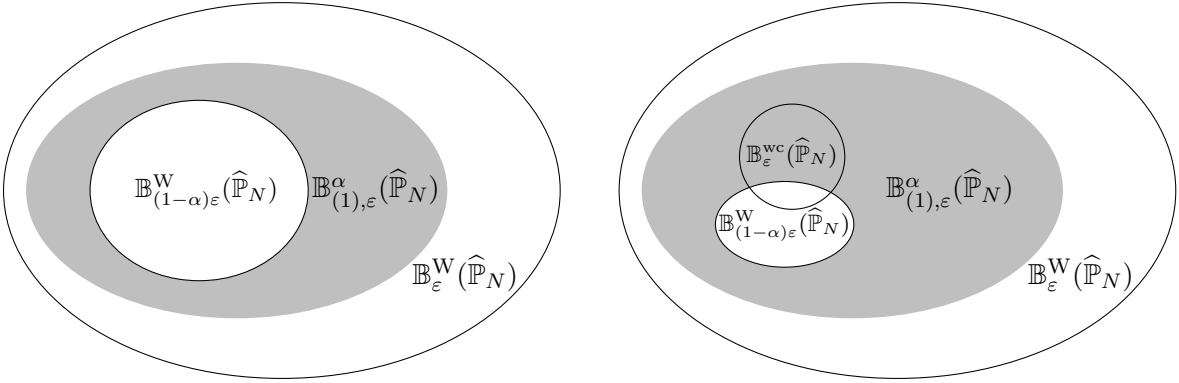
\begin{figure}[htbp]
\begin{center}
\begin{tikzpicture}
\fill[gray!50] (0.4,0)  circle (2.8 and 1.7);
\draw (1,0)  circle (3.7 and 2.5);
\fill[white] (-0.1,0)  circle (1.45 and 1.2);
\draw (-0.1,0)  circle (1.45 and 1.2);
 \node at (3.44, -1.1) {$\mathbb{B}^{\rm W}_{\varepsilon}(\widehat{\mathbb{P}}_{N} )$};
        \node at (2.25,0) {$\mathbb{B}^\alpha_{(1),\varepsilon}(\widehat{\mathbb{P}}_{N} )$};
                \node at (0,0) {\small{$\mathbb{B}^{\rm W}_{(1-\alpha)\varepsilon} (\widehat{\mathbb{P}}_{N} )$}};
\end{tikzpicture}
~~~~
\begin{tikzpicture}
\fill[gray!50] (0.4,0)  circle (2.8 and 1.7);
\draw (1,0)  circle (3.7 and 2.5);
\fill[white] (-0.5,-0.45)  circle (0.92 and 0.57);
\draw (-0.5,-0.45)  circle (0.92 and 0.57);
\draw (-0.4,0.45)  circle (0.7);
 \node at (3.44, -1.1) {$\mathbb{B}^{\rm W}_{\varepsilon}(\widehat{\mathbb{P}}_{N} )$};
        \node at (1.7,0) {$\mathbb{B}^\alpha_{(1),\varepsilon}(\widehat{\mathbb{P}}_{N} )$};
                \node at (-0.5,-0.45) {\footnotesize{$\mathbb{B}^{\rm W}_{(1-\alpha)\varepsilon} (\widehat{\mathbb{P}}_{N} )$}};
                \node at (-0.4,0.5) {\footnotesize{$\mathbb{B}_{\varepsilon}^{{\rm wc}} (\widehat{\mathbb{P}}_{N})$}};
\end{tikzpicture}
\caption{Relationships among the ambiguity sets}
\label{setrelation}
\end{center}
\end{figure}

The figure and the set inclusion relationships \eqref{setinclusion} also demonstrate that while the worst-case expectation problem
$\sup _{\mathbb{P} \in \mathbb{B}^{\alpha}_{(1),\varepsilon}(\widehat{\mathbb{P}}_{N} )} \mathbb{E}^{\mathbb{P}}[\ell(\xi)]$ can be bounded above and below respectively by  $\sup _{\mathbb{P} \in \mathbb{B}^{\rm W}_{\varepsilon} (\widehat{\mathbb{P}}_{N} )} \mathbb{E}^{\mathbb{P}}[\ell(\xi)]$ and $\sup _{\mathbb{P} \in \mathbb{B}^{\rm W}_{(1-\alpha)\varepsilon} (\widehat{\mathbb{P}}_{N} )} \mathbb{E}^{\mathbb{P}}[\ell(\xi)]$, neither of the two can be used as a reasonable proxy to $\sup _{\mathbb{P} \in \mathbb{B}^{\alpha}_{(1),\varepsilon}(\widehat{\mathbb{P}}_{N} )} \mathbb{E}^{\mathbb{P}}[\ell(\xi)]$. The problem $\sup _{\mathbb{P} \in \mathbb{B}^{\rm W}_{\varepsilon} (\widehat{\mathbb{P}}_{N} )} \mathbb{E}^{\mathbb{P}}[\ell(\xi)]$ is overly-conservative, i.e. less data-driven, whereas the problem $\sup _{\mathbb{P} \in \mathbb{B}^{\rm W}_{(1-\alpha)\varepsilon} (\widehat{\mathbb{P}}_{N} )} \mathbb{E}^{\mathbb{P}}[\ell(\xi)]$ accounts for too little, or almost none, uncertainty, i.e. non-robust, as $\alpha \rightarrow 1$. Taking this perspective, we can further see how \eqref{connection} sheds light on the underlying mechanism of the worst-case expectation problem $\sup _{\mathbb{P} \in \mathbb{B}^{\alpha}_{(1),\varepsilon} (\widehat{\mathbb{P}}_{N} )} \mathbb{E}^{\mathbb{P}}[\ell(\xi)]$ to offer data-driven evaluation of expected cost while maintaining some guaranteed level of robustness. Specifically, as $\alpha \rightarrow 1$, i.e. increasingly data-driven, the set $\mathbb{B}^{\rm W}_{(1-\alpha)\varepsilon} (\widehat{\mathbb{P}}_{N} )$ in the reduced set  $\overline{\mathbb{B}}_{\epsilon}^{\alpha}$  (in \eqref{reducedset}) would shrink towards $\widehat{\mathbb{P}}_{N}$, while at the same time the worst-case ambiguity set $\mathbb{B}_{\varepsilon}^{{\rm wc}} (\widehat{\mathbb{P}}_{N})$ in $\overline{\mathbb{B}}_{\epsilon}^{\alpha}$ guarantees the {\it minimum} level of robustness by taking into account any distribution with support maximally deviating by $\epsilon$. The set $\mathbb{B}^{\rm W}_{(1-\alpha)\varepsilon} (\widehat{\mathbb{P}}_{N} )$ can turn to be a set providing additional robustness when $\alpha \rightarrow 0$, in which case
\begin{equation} \label{wost2}
\sup _{\mathbb{P} \in \mathbb{B}_{\varepsilon}^{{\rm wc}} (\widehat{\mathbb{P}}_{N} )} \mathbb{E}^{\mathbb{P}}[\ell(\xi)]  < \sup _{\mathbb{P} \in \mathbb{B}^{\rm W}_{(1-\alpha)\varepsilon} (\widehat{\mathbb{P}}_{N} )} \mathbb{E}^{\mathbb{P}}[\ell(\xi)].
\end{equation}

Third, this connection via \eqref{connection} implies the following intriguing observation of the worst-case distributions for the worst-case expectation problem \eqref{wcvar}.
\begin{corollary} \label{wcdis} Under the condition of Theorem \ref{main_cvx}, if $ \max_{\|e\|\le 1} \ell(\widehat{\xi}_i+\epsilon e) > \ell(\widehat{\xi}_i)$ for some $\widehat{\xi}_i$, then there always exists $\alpha \in (0,1)$ such that the worst-case expectation problem \eqref{wcvar} is attainable, i.e. the worst-case distribution exists.
\end{corollary}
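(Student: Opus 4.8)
The plan is to read off from Theorem~\ref{main_cvx} that the optimal value of \eqref{wcvar} equals $\max\{A,\,B(\alpha)\}$ with
\[
A \,:=\, \frac1N\sum_{i=1}^N \max_{\|e\|=1}\ell(\widehat{\xi}_i+\epsilon e)
\qquad\text{and}\qquad
B(\alpha) \,:=\, \frac1N\sum_{i=1}^N \ell(\widehat{\xi}_i) + L(1-\alpha)\epsilon ,
\]
i.e. the values \eqref{worst_exp} and \eqref{wass_exp}; note $A$ does not depend on $\alpha$, whereas $B(\alpha)\downarrow \frac1N\sum_i\ell(\widehat{\xi}_i)$ as $\alpha\uparrow 1$. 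The strategy is: (a) show $A$ is always attained by a distribution lying in $\mathbb{B}_{(1),\varepsilon}^{\alpha}(\widehat{\mathbb{P}}_{N})$; (b) show the hypothesis forces $A$ to be strictly larger than $\frac1N\sum_i\ell(\widehat{\xi}_i)$; (c) pick $\alpha$ close enough to $1$ that $A$ strictly dominates $B(\alpha)$, so the attained value $A$ is the optimum of \eqref{wcvar}.

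For step (a): since $\ell$ is a finite convex function on $\R^m$ it is continuous, so for each $i$ the maximum $\max_{\|e\|=1}\ell(\widehat{\xi}_i+\epsilon e)$ is attained at some unit vector $e_i^\star$. Set $\mathbb{P}^\star:=\frac1N\sum_{i=1}^N\delta_{\widehat{\xi}_i+\epsilon e_i^\star}$. The coupling $\frac1N\sum_i\delta_{(\widehat{\xi}_i,\,\widehat{\xi}_i+\epsilon e_i^\star)}$ has marginals $\widehat{\mathbb{P}}_{N}$ and $\mathbb{P}^\star$ and transportation cost $\esssup$-bounded by $\epsilon$, so $d_\infty(\widehat{\mathbb{P}}_{N},\mathbb{P}^\star)\le\epsilon$; since $\esssup\ge{\rm CVaR}_\alpha$ implies $d_\infty\ge d_{{\rm CVaR}_\alpha}$, we get $\mathbb{P}^\star\in\mathbb{B}_{\varepsilon}^{{\rm wc}}(\widehat{\mathbb{P}}_{N})\subseteq\mathbb{B}_{(1),\varepsilon}^{\alpha}(\widehat{\mathbb{P}}_{N})$, and $\mathbb{E}^{\mathbb{P}^\star}[\ell(\xi)]=A$ by \eqref{worst_exp}. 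For step (b): for each $i$ and any unit $e$, convexity gives $\ell(\widehat{\xi}_i+\epsilon e)+\ell(\widehat{\xi}_i-\epsilon e)\ge 2\ell(\widehat{\xi}_i)$, hence $\max_{\|e\|=1}\ell(\widehat{\xi}_i+\epsilon e)\ge\ell(\widehat{\xi}_i)$; an elementary convexity argument (writing $\widehat{\xi}_i+\epsilon e=(1-t)\widehat{\xi}_i+t(\widehat{\xi}_i+\epsilon e/t)$ for $\|e\|=t\in(0,1)$) gives $\max_{\|e\|\le1}\ell(\widehat{\xi}_i+\epsilon e)=\max_{\|e\|=1}\ell(\widehat{\xi}_i+\epsilon e)$, so the hypothesis $\max_{\|e\|\le1}\ell(\widehat{\xi}_i+\epsilon e)>\ell(\widehat{\xi}_i)$ for one index upgrades that term to a strict inequality. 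Summing, $A>\frac1N\sum_i\ell(\widehat{\xi}_i)$; write $\Delta:=A-\frac1N\sum_i\ell(\widehat{\xi}_i)>0$, and note the hypothesis is vacuous unless $\ell$ is nonconstant, forcing $L>0$ and $\epsilon>0$, while $\Delta\le L\epsilon<\infty$ since $\ell(\widehat{\xi}_i+\epsilon e)\le\ell(\widehat{\xi}_i)+L\epsilon$.

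For step (c): choose any $\alpha\in\bigl(\max\{0,\,1-\Delta/(L\epsilon)\},\,1\bigr)$. Then $B(\alpha)=\frac1N\sum_i\ell(\widehat{\xi}_i)+L(1-\alpha)\epsilon<\frac1N\sum_i\ell(\widehat{\xi}_i)+\Delta=A$, so by \eqref{connection} the optimal value of \eqref{wcvar} equals $\max\{A,B(\alpha)\}=A$, which is attained by $\mathbb{P}^\star\in\mathbb{B}_{(1),\varepsilon}^{\alpha}(\widehat{\mathbb{P}}_{N})$; hence the worst-case distribution exists. I expect no serious obstacle — the argument is essentially bookkeeping around Theorem~\ref{main_cvx} — and the only genuinely substantive point is the strict inequality $A>\frac1N\sum_i\ell(\widehat{\xi}_i)$ of step (b), which is where the hypothesis enters and which also quietly uses convexity (the midpoint bound and the reduction from $\|e\|\le1$ to $\|e\|=1$) together with $L,\epsilon>0$.
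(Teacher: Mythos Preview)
Your proposal is correct and follows essentially the same approach as the paper's own proof: both use Theorem~\ref{main_cvx} to reduce the optimal value to $\max\{A,B(\alpha)\}$, exploit the hypothesis to get $A>\frac1N\sum_i\ell(\widehat{\xi}_i)$, then pick $\alpha$ close to $1$ so that $A>B(\alpha)$ and exhibit the attaining distribution $\mathbb{P}^\star=\frac1N\sum_i\delta_{\widehat{\xi}_i+\epsilon e_i^\star}$. Your write-up is in fact more careful than the paper's on two points the paper takes for granted --- the equality $\max_{\|e\|\le1}\ell(\widehat{\xi}_i+\epsilon e)=\max_{\|e\|=1}\ell(\widehat{\xi}_i+\epsilon e)$ and the termwise inequality $\max_{\|e\|=1}\ell(\widehat{\xi}_i+\epsilon e)\ge\ell(\widehat{\xi}_i)$ --- both of which you justify via convexity.
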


This is in sharp contrast to the worst-case distributions for the worst-case expectation problem
$\sup _{\mathbb{P} \in \mathbb{B}^{\rm W}_{\varepsilon} (\widehat{\mathbb{P}}_{N} )} \mathbb{E}^{\mathbb{P}}[\ell(\xi)]$ formulated based on the Wasserstein ambiguity sets $\mathbb{B}^{\rm W}_{\varepsilon} (\widehat{\mathbb{P}}_{N} )$. Namely, as shown in Proposition \ref{thm:7} in the appendix, the worst-case distributions for the latter generally do not exist. The finding
 in Corollary \ref{wcdis} is difficult to identify directly from the property of the CVaR metric. It might be tempting to suppose that the worst-case distributions for \eqref{wcvar} may not exist for any $\alpha \in [0,1)$, given that the CVaR metric is the conditional analog of the expectation for any $\alpha <1$.

Finally, we demonstrate how Theorem \ref{main_cvx} can be applied to solve problems such as portfolio optimization and machine learning mentioned in Section \ref{illexample}. Let us first make the following observation.
\begin{corollary} \label{reduced1}
In the case where $\ell(\xi) = f(x^\top \xi)$ and $f$ is a Lipschitz continuous convex function in $\mathbb{R}$, the worst-case expectation problem \eqref{wcvar} can be solved by
$$
\min_{x \in \mathbb{X}}
\max \left\{
 \frac1N\sum_{i=1}^N f(x^\top \widehat{\xi}_i) +  {\rm Lip}(f)\|x\|_{*}(1-\alpha)\epsilon,
  \frac1N\sum_{i=1}^N \max\left\{ f_1 (x^\top \widehat{\xi}_i - \epsilon\|x\|_{*}), ~f_2 (x^\top  \widehat{\xi}_i +\epsilon\|x\|_{*} ) \right\}
\right\},
$$
where $f_1(t)= f(t)1_{\{t<t_0\}}+ f(t_0)1_{\{t\ge t_0\}} $ and $f_2(t) = f(t_0)1_{\{t\le t_0\}}+ f(t)1_{\{t> t_0\}} $ and $t_0\in [-\infty,\infty]$ is such that $f$ is decreasing on $(-\infty,t_0)$ and increasing on $(t_0,\infty)$.
\end{corollary}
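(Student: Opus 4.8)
The plan is to deduce the corollary directly from Theorem~\ref{main_cvx} by specializing its two ingredients \eqref{worst_exp} and \eqref{wass_exp} to $\ell(\xi)=f(x^\top\xi)$ and then reinstating the outer minimization over $x\in\mathbb{X}$. First I would check that, for each fixed $x$, the function $\xi\mapsto f(x^\top\xi)$ is convex (a convex function composed with a linear map) and, because $f$ is Lipschitz, satisfies $\max_{\xi\in\R^m}\|\partial_\xi\ell(\xi)\|_*<\infty$; together with $\Xi=\R^m$ this is exactly the hypothesis of Theorem~\ref{main_cvx}, so the equivalence \eqref{connection} applies. Next I would compute the constant $L$ appearing in \eqref{wass_exp}: by the subdifferential chain rule through the linear map $\xi\mapsto x^\top\xi$ one has $\partial_\xi\ell(\xi)=\{s\,x:\ s\in\partial f(x^\top\xi)\}$, hence $\|\partial_\xi\ell(\xi)\|_*=\|x\|_*\sup\{|s|:\ s\in\partial f(x^\top\xi)\}$ and therefore $L={\rm Lip}(f)\,\|x\|_*$. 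Substituting into \eqref{wass_exp} gives $\frac1N\sum_i f(x^\top\widehat{\xi}_i)+{\rm Lip}(f)\|x\|_*(1-\alpha)\epsilon$, which is the first branch of the maximum in the statement.

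For the second branch I would rewrite the worst-case term \eqref{worst_exp}. For each $i$, since $\ell(\widehat{\xi}_i+\epsilon e)=f(x^\top\widehat{\xi}_i+\epsilon\,x^\top e)$ and $\{x^\top e:\ \|e\|=1\}=[-\|x\|_*,\|x\|_*]$ by definition of the dual norm, the inner maximum equals $\max_{s\in[a_i,b_i]}f(s)$ with $a_i:=x^\top\widehat{\xi}_i-\epsilon\|x\|_*$ and $b_i:=x^\top\widehat{\xi}_i+\epsilon\|x\|_*$. Using that a convex $f$ on $\R$ is non-increasing on $(-\infty,t_0)$ and non-decreasing on $(t_0,\infty)$, I would verify in the three configurations $t_0\le a_i$, $a_i<t_0<b_i$, and $t_0\ge b_i$ (together with the degenerate monotone cases $t_0=\pm\infty$, where one of $f_1,f_2$ collapses to the constant $\inf f$) that $\max_{s\in[a_i,b_i]}f(s)=\max\{f_1(a_i),f_2(b_i)\}$, with $f_1,f_2$ the truncations defined in the statement. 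Averaging over $i$ yields the second branch.

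Finally, plugging the two branches into \eqref{connection} shows that for every fixed $x$ the inner problem $\sup_{\mathbb{P}\in\mathbb{B}_{(1),\varepsilon}^{\alpha}(\widehat{\mathbb{P}}_N)}\mathbb{E}^{\mathbb{P}}[f(x^\top\xi)]$ equals the claimed maximum of the two expressions; taking $\inf_{x\in\mathbb{X}}$ on both sides gives the stated reformulation of the GW-DRO problem. The only nontrivial step is the endpoint identity $\max_{s\in[a_i,b_i]}f(s)=\max\{f_1(a_i),f_2(b_i)\}$ — it is elementary but requires the careful case split on the position of $t_0$ relative to the interval, including the boundary cases where $t_0$ is infinite; everything else is a direct substitution into Theorem~\ref{main_cvx}.
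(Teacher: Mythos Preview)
Your proposal is correct and follows essentially the same route as the paper: both specialize Theorem~\ref{main_cvx} to $\ell(\xi)=f(x^\top\xi)$, compute $L={\rm Lip}(f)\|x\|_*$ for \eqref{wass_exp}, reduce $\max_{\|e\|=1}\ell(\widehat{\xi}_i+\epsilon e)$ in \eqref{worst_exp} to $\max\{f_1(x^\top\widehat{\xi}_i-\epsilon\|x\|_*),\,f_2(x^\top\widehat{\xi}_i+\epsilon\|x\|_*)\}$, and then substitute into \eqref{connection}. You are merely more explicit about verifying the hypotheses and about the case split on the location of $t_0$, which the paper leaves as ``one can verify''.
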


\begin{example} \label{applications} {(continue Example (ii), Section \ref{illexample})}
Assuming the utility function $u$ is Lipschitz continuous, we can apply Corollary \ref{reduced1} to the robust portfolio optimization problem
$$ \min_{x \in \mathbb{X}} \sup _{\mathbb{P} \in \mathbb{B}_{(1),\varepsilon}^{\alpha}(\widehat{\mathbb{P}}_{N} )}   \mathbb{E}^{\mathbb{P}}[-u( \xi^\top x)]$$
by setting $f=-u$. We obtain the following convex program

$$
\min_{x \in \mathbb{X}}
\max \left\{
 \frac1N\sum_{i=1}^N -u(x^\top \widehat{\xi}_i) +  {\rm Lip}(u)\|x\|_{*}(1-\alpha)\epsilon,
  \frac1N\sum_{i=1}^N -u (x^\top  \widehat{\xi}_i -\epsilon\|x\|_{*} ) \right\}.
$$
\end{example}

\begin{example} {(continue Example (iii), Section \ref{illexample})} \label{ex-ml} Let us consider first solving the distributionally robust regression problem
$$ \min_{\beta \in {\cal B}} \sup _{\mathbb{P} \in \mathbb{B}_{(1),\varepsilon}^{\alpha}(\widehat{\mathbb{P}}_{N} )}
\mathbb{E}^\mathbb{P}[ \ell(\beta^\top \xi^x - \xi^y) ], $$
where $\ell:\mathbb{R}\rightarrow \mathbb{R}_+$ is convex Lipschitz continuous. The function $\ell$ in regression is generally symmetric with respect to the origin and $\ell(0)=0$. That is, $\ell(t) = \ell_1(t) + \ell_2(t) $, $\ell_1(t) = h(t_-)$ and $\ell_2(t) = h(t_+) $ for some non-decreasing convex function $h: \mathbb{R}_+ \rightarrow \mathbb{R}_+$, $h(0)=0$. Applying Corollary \ref{reduced1}, we arrive at the following convex program
\begin{align*}
\min_{\beta \in {\cal B}}\max & \left\{ \begin{array}{l}
\frac{1}{N}\sum_{i=1}^{N}\ell(\beta^{\top}\widehat{\xi}_{i}^{x}-\widehat{\xi}_{i}^{y})+{\rm Lip}(\ell)\| (\beta,-1)\| _{*}(1-\alpha)\varepsilon,\\
\frac{1}{N}\sum_{i=1}^{N}\max\left\{ \ell_{1}(\beta^{\top}\widehat{\xi}_{i}^{x}-\widehat{\xi}_{i}^{y}-\| (\beta,-1)\| _{*}\varepsilon),\ell_{2}(\beta^{\top}\widehat{\xi}_{i}^{x}-\widehat{\xi}_{i}^{y}+\| (\beta,-1)\| _{*}\varepsilon)\right\}
\end{array}\right\}.
\end{align*}

Next, consider solving the distributionally robust classification problem
$$ \min_{\beta \in {\cal B}} \sup _{\mathbb{P} \in \mathbb{B}_{(1),\varepsilon}^{\alpha}(\widehat{\mathbb{P}}_{N} )}
\mathbb{E}^\mathbb{P}[ \ell(\xi^y \cdot \beta^\top \xi^x) ], $$
where $\ell:\mathbb{R}\rightarrow \mathbb{R}_+$ is non-increasing convex Lipschitz continuous. Applying Corollary \ref{reduced1}, we arrive at the following convex program
\[
\min_{\beta \in {\cal B}}\max\left\{ \frac{1}{N}\sum_{i=1}^{N}\ell(\widehat{\xi}_{i}^{y}\cdot\beta^{\top}\widehat{\xi}_{i}^{x})+{\rm Lip}(\ell) \| \beta\| _{*}(1-\alpha)\varepsilon,\frac{1}{N}\sum_{i=1}^{N}\ell(\widehat{\xi}_{i}^{y}\cdot\beta^{\top}\widehat{\xi}_{i}^{x}-\| \beta\| _{*}\varepsilon)\right\}.
\]
A list of Lipschitz continuous functions $\ell$ in regression and classification can be found for examples in  \cite{SKE19}.
\end{example}

\paragraph{A regularization perspective}
 It is known that applying the type-1 Wasserstein DRO to statistical learning problems such as regression and classification is equivalent to solving a classical regularized empirical risk minimization problem (see e.g. \cite{SKE19}.), i.e.
(a):$\frac1N\sum_{i=1}^N f(x^\top \widehat{\xi}_i) +  {\rm Lip}(f)\|x\|_{*}(1-\alpha)\epsilon$ in Corollary \ref{reduced1}, where the regularization term ${\rm Lip}(f)\|x\|_{*}(1-\alpha)\epsilon$ controls the size of the decision variable $x$. One can see from Corollary \ref{reduced1} that applying GW-DRO to statistical learning problems essentially boils down to aggregating two different forms of regularized empirical problems, i.e. (a) and
(b): $\frac1N\sum_{i=1}^N \max\left\{ f_1 (x^\top  \widehat{\xi}_i - \epsilon\|x\|_{*}), ~f_2 (x^\top  \widehat{\xi}_i +\epsilon\|x\|_{*} ) \right\}$, and GW-DRO determines which regularized problem, (a) or (b), to apply according to which one is more conservative, i.e. the one giving a larger value. Clearly, whether the regularized problem (a) or (b) is more conservative would depend on the  exact value of the decision variable $x$, i.e. the choice of a regularized problem in GW-DRO is decision-dependent. One can observe that the value of (a) would tend to be larger than that of (b), when $\alpha \rightarrow 0$, since the regularization term ${\rm Lip}(f)\|x\|_{*}(1-\alpha)\epsilon$ in (a) would become more dominating. In other words, GW-DRO would behave more similarly as the classical regularized problem as $\alpha$ decreases. This perspective that GW-DRO could serve as an ensemble of different regularized problems appears to be of high novelty. In particular, the idea of aggregating regularized problems by taking the pointwise maximum may offer a means to address the general challenge of regularization scheme selection.

\subsection{Expectile-Wasserstein ambiguity sets}
One may expect that the worst-case expectation problem $\sup _{\mathbb{P} \in \mathbb{B}_{(2),\varepsilon}^{\alpha} (\widehat{\mathbb{P}}_{N} )} \mathbb{E}^{\mathbb{P}}[\ell(\xi)]$ defined over expectile-Wasserstein ambiguity sets is a more challenging, or less tractable, problem than the one studied in the previous section, i.e. $\sup _{\mathbb{P} \in \mathbb{B}_{(1),\varepsilon}^{\alpha} (\widehat{\mathbb{P}}_{N} )} \mathbb{E}^{\mathbb{P}}[\ell(\xi)]$, given the common belief that expectiles are more sophisticated forms of risk measures than CVaR. We show in this section that perhaps quite counter-intuitvely, the former is as tractable as, in fact generally more tractable than, the latter. In particular, in the case where $\rho = e_{\alpha}$, the finite-dimensional problem
 \eqref{eq:gene-2-31}, as shown below, always admits a convex reformulation for any convex support set $\Xi$.
The expectile-Wasserstein ambiguity sets $\mathbb{B}_{(2),\varepsilon}^{\alpha} (\widehat{\mathbb{P}}_{N} )$ could thus be a more appealing choice than the CVaR-Wasserstein ambiguity sets $\mathbb{B}_{(1),\varepsilon}^{\alpha} (\widehat{\mathbb{P}}_{N} )$ when the support set is constrained, i.e. $\Xi \neq \mathbb{R}^m$. We obtain the following two convex optimization reformulations, one in a maximization form and another in a minimization form.
\begin{theorem} \label{ex-thm1}
In the case where the loss function $\ell$ is piecewise linear, taking the form of $\ell(x) = \max_{j=1,...,K} \{a_j^\top x + b_j\}$, the worst-case expectation problem
\begin{equation} \label{wcexptile}
\sup _{\mathbb{P} \in \mathbb{B}_{(2),\varepsilon}^{\alpha} (\widehat{\mathbb{P}}_{N} )} \mathbb{E}^{\mathbb{P}}[\ell(\xi)]
\end{equation}
is equivalent to the following convex maximization problem
\begin{align}
\sup_{p_{ij}\ge0, y_{ij}\in\R^m}~~ &\frac1N\sum_{i=1}^N\sum_{j=1}^K \left(a_j^\top y_{ij} +(a_j^\top \widehat{\xi}_i+b_j) p_{ij} \right) \label{eq:expectile-2} \\
{\rm subject~ to}~~&   \frac1N \sum_{i=1}^{N} \sum_{j=1}^K(\|y_{ij}  \|-\varepsilon p_{ij})_+ +\frac{1-\alpha}{2\alpha - 1} \frac1N \sum_{i=1}^{N} \sum_{j=1}^K\|y_{ij}\|\le \frac{1-\alpha}{2\alpha - 1} \varepsilon ,\nonumber\\
& \sum_{j=1}^Kp_{ij} =1,~~i=1,\ldots,N,  ~ \widehat{\xi}_i+\frac{y_{ij}}{p_{ij}}  \in \Xi,~\forall i,j, \nonumber
\end{align}
where ${y_{ij}}/{p_{ij}}$ reads as $\mathbf{\infty}$ if ${y_{ij}}\neq {\bf 0}$, ${p_{ij}}=0$, and $\mathbf{0}$ if ${y_{ij}}={\bf 0}$ and ${p_{ij}}=0$.
Moreover, the problem is also equivalent to the following convex minimization problem
\begin{align}
\inf_{\lambda,s,u_{ij},v_{ij}} ~~& \lambda\varepsilon+\frac{1}{N}\sum_{i=1}^{N}s_{i}& \label{eq:expectile-4} \\
{\rm subject\;to} ~~& b_{j}+\sigma_{\Xi}(u_{ij}+a_{j})-u_{ij}^{\top}\xi_{i}+\| v_{ij}\| _{*} \varepsilon \leq s_{i} & \forall i,j, \nonumber\\
 & \| v_{ij}\| _{*}\leq\lambda &\forall i,j, \nonumber\\
 & \| u_{ij}+v_{ij}\| _{*}\leq \frac{1-\alpha}{2\alpha - 1} \lambda & \forall i,j, \nonumber
\end{align}
where $\lambda \in \mathbb{R}, s \in \mathbb{R}^N, u_{ij}\in \mathbb{R}^m, v_{ij}\in \mathbb{R}^m$, and $\sigma_{\Xi}$ is the support function of $\Xi$.
\end{theorem}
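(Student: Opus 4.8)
The plan is to establish the two reformulations one after the other. First I would obtain the convex maximization problem \eqref{eq:expectile-2} from Theorem \ref{th:main-2} combined with an explicit rewriting of the expectile constraint and a perspective-type change of variables; then I would obtain the convex minimization problem \eqref{eq:expectile-4} from \eqref{eq:expectile-2} (equivalently, from the infinite-dimensional problem) by convex Lagrangian duality. Throughout write $\beta:=\tfrac{1-\alpha}{2\alpha-1}$ for brevity.

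For the maximization form I would start from Theorem \ref{th:main-2}, which applies verbatim since $\ell(x)=\max_{j}\{a_j^\top x+b_j\}$ is convex piecewise linear with pieces $\ell_j(x)=a_j^\top x+b_j$; this reduces \eqref{wcexptile} to the finite-dimensional problem \eqref{eq:gene-2-31} with $\rho=e_{\alpha}$. The first key step is to linearize $e_{\alpha}^{\Pi}(\|\widehat\xi-\xi\|)\le\varepsilon$. Using the defining equation of the expectile and the fact that $x\mapsto\alpha\E[(X-x)_+]-(1-\alpha)\E[(X-x)_-]$ is non-increasing, this constraint is equivalent to $\alpha\,\E^{\Pi}[(\|\widehat\xi-\xi\|-\varepsilon)_+]\le(1-\alpha)\,\E^{\Pi}[(\varepsilon-\|\widehat\xi-\xi\|)_+]$; since $\|\widehat\xi-\xi\|\ge0$ and $\varepsilon\ge0$ one substitutes $(\varepsilon-t)_+=\varepsilon-t+(t-\varepsilon)_+$ and rearranges to get, for $\alpha\in(1/2,1)$,
\[
\E^{\Pi}\big[(\|\widehat\xi-\xi\|-\varepsilon)_+\big]+\beta\,\E^{\Pi}\big[\|\widehat\xi-\xi\|\big]\le\beta\varepsilon .
\]
The second key step is the substitution $y_{ij}=p_{ij}(\xi_{ij}-\widehat\xi_i)$ on the atoms $\Pi((\widehat\xi,\xi)=(\widehat\xi_i,\xi_{ij}))=p_{ij}/N$ of \eqref{eq:gene-2-31}. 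Under it $p_{ij}\ell_j(\xi_{ij})=a_j^\top y_{ij}+(a_j^\top\widehat\xi_i+b_j)p_{ij}$, $p_{ij}\|\xi_{ij}-\widehat\xi_i\|=\|y_{ij}\|$, $p_{ij}(\|\xi_{ij}-\widehat\xi_i\|-\varepsilon)_+=(\|y_{ij}\|-\varepsilon p_{ij})_+$, and $\xi_{ij}\in\Xi$ becomes $\widehat\xi_i+y_{ij}/p_{ij}\in\Xi$ (with the stated conventions at $p_{ij}=0$); this turns the objective and the constraints of \eqref{eq:gene-2-31} exactly into those of \eqref{eq:expectile-2}. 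Convexity of \eqref{eq:expectile-2} is then immediate: the objective is linear, $(\|y_{ij}\|-\varepsilon p_{ij})_+$ and $\|y_{ij}\|$ are convex in $(y_{ij},p_{ij})$, the normalization is affine, and $\{(p,y):p\ge0,\ \widehat\xi_i+y/p\in\Xi\}$ is convex by the standard perspective argument since $\Xi$ is convex.

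For the minimization form I would dualize \eqref{eq:expectile-2}, or equivalently dualize the problem $\sup_{\Pi}\{\E^{\Pi}[\ell(\xi)]:\E^{\Pi}[\psi(\|\widehat\xi-\xi\|)]\le\beta\varepsilon,\ \Pi\in\Pi(\widehat{\mathbb{P}}_N,\cdot),\ \xi\in\Xi\}$ with $\psi(r)=(r-\varepsilon)_++\beta r$, which is now \emph{linear} in $\Pi$ and hence a classical transport-penalized DRO problem. Introducing a multiplier $\lambda\ge0$ for the $\psi$-constraint and $s_i$ for the normalizations, the dual function involves, for each $i,j$, the inner supremum $\sup_{\xi\in\Xi}\{a_j^\top\xi+b_j-\lambda\psi(\|\widehat\xi_i-\xi\|)\}$. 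Writing $\psi(r)=\max_{t\in[0,1]}\{(t+\beta)r-t\varepsilon\}$, exchanging $\sup_{\xi}$ with the resulting $\min_{t}$ by Sion's minimax theorem, and then using the support-function/dual-norm identity $\sup_{\xi\in\Xi}\{a_j^\top\xi-\gamma\|\widehat\xi_i-\xi\|\}=\min_{\|z\|_*\le\gamma}\{\sigma_{\Xi}(a_j-z)+z^\top\widehat\xi_i\}$, the inner supremum becomes a finite minimization over $t\in[0,1]$ and an auxiliary direction $z$ with $\|z\|_*\le\lambda(t+\beta)$. Identifying $u_{ij}$ with $-z$ and $v_{ij}$ with the direction/magnitude carrying the parameter $t$ (so that $\|v_{ij}\|_*\le\lambda$ encodes $t\in[0,1]$ and $\|u_{ij}+v_{ij}\|_*\le\beta\lambda$ encodes $\|z\|_*\le\lambda(t+\beta)$), and after a normalization of $\lambda$, yields exactly \eqref{eq:expectile-4}. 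Strong duality holds because \eqref{eq:expectile-2} is convex and Slater's condition is met for $\varepsilon>0$ (e.g. $p_{ij}=1/K$, $y_{ij}=0$ is strictly feasible).

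The main obstacle is the duality bookkeeping for \eqref{eq:expectile-4}. Three points need care: (i) the penalty $\psi$ is non-smooth, so one must pass to its representation as a pointwise maximum of two affine functions of $r$ (equivalently, introduce an epigraph variable for the positive part) before dualizing, and it is exactly this kink that produces \emph{two} dual-norm constraints rather than one; (ii) the general support set $\Xi$ enters only through $\sigma_{\Xi}$, and the shift $\sigma_{\Xi}(u_{ij}+a_j)$ arises because the linear term $a_j^\top\xi$ of $\ell_j$ is absorbed together with the transport direction; and (iii) each minimax exchange must be justified — Sion's theorem applies since in each case one side ranges over a compact convex set and the bifunction is concave–convex. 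The boundary case $\alpha=1/2$ (where $e_{1/2}=\E$ and $\beta$ diverges) is treated separately and recovers the type-1 Wasserstein reformulation. Everything else — the change of variables, the expectile-constraint rewriting, and the convexity claims — is routine.
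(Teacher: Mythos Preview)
Your proposal is correct and follows essentially the same path as the paper. The derivation of the maximization form \eqref{eq:expectile-2} is identical: Theorem~\ref{th:main-2}, the rewriting $e_\alpha(X)\le\varepsilon\Leftrightarrow(2\alpha-1)\E[(X-\varepsilon)_+]\le(1-\alpha)(\varepsilon-\E X)$, and the perspective substitution $y_{ij}=p_{ij}(\xi_{ij}-\widehat\xi_i)$.

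For the minimization form, both you and the paper Lagrange-dualize the linearized constraint and then evaluate the inner supremum $\sup_{\xi\in\Xi}\{a_j^\top\xi+b_j-\lambda\psi(\|\xi-\widehat\xi_i\|)\}$. The paper does this by Fenchel duality and infimal convolution, splitting $\lambda\psi$ into $f_1(\xi)=\lambda(\|\xi-\widehat\xi_i\|-\varepsilon)_+$ and $f_2(\xi)=\lambda\beta\|\xi-\widehat\xi_i\|$ and computing $(f_1+f_2)^*$; this is where the vector variable $v_{ij}$ (your ``direction carrying $t$'') arises naturally, as the argument of $f_1^*$ in the infimal convolution. Your route via $\psi(r)=\max_{t\in[0,1]}\{(t+\beta)r-t\varepsilon\}$ and Sion's theorem is equivalent but naturally produces a \emph{scalar} auxiliary $t_{ij}\in[0,1]$ with constraint $\|u_{ij}\|_*\le\lambda(t_{ij}+\beta)$ and term $\lambda t_{ij}\varepsilon$, not the vector $v_{ij}$ with $\|v_{ij}\|_*\le\lambda$, $\|u_{ij}+v_{ij}\|_*\le\lambda\beta$, $\|v_{ij}\|_*\varepsilon$ of \eqref{eq:expectile-4}. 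Your ``identification'' of the two is the only imprecise step: to land on exactly \eqref{eq:expectile-4} you still need the elementary observation that, for fixed $u$, $\inf\{\|v\|_*\varepsilon:\|v\|_*\le\lambda,\ \|u+v\|_*\le\lambda\beta\}=\inf\{\lambda t\varepsilon:t\in[0,1],\ \|u\|_*\le\lambda(t+\beta)\}=\varepsilon(\|u\|_*-\lambda\beta)_+$ (both feasible iff $\|u\|_*\le\lambda(1+\beta)$), which is straightforward from the triangle inequality.
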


The formulation \eqref{eq:expectile-2} is obtained from the problem \eqref{eq:gene-2-31} and can be used to compute the worst-case distributions, whereas the formulation \eqref{eq:expectile-4} is the dual of \eqref{eq:expectile-2} and can be used to solve the overall GW-DRO problem as a single minimization problem. The key observation from the above result, particularly the formulation \eqref{eq:expectile-2}, is that while expectiles are similar to CVaR in terms of general functional properties, i.e. both are coherent risk measures that are concave in distributions, the feasible sets of distributions induced from the two, i.e. the feasible set of \eqref{eq:gene-2-31}, have different properties. Namely, the former is convex, whereas the latter is nonconvex. This demonstrates also why exploring different coherent risk measures in defining an ambiguity set can be useful. The above result may appear more limited than the result in the previous section in that the loss function $\ell$ is assumed to be piecewise linear. As another key finding, we show next that in the case where the support set $\Xi$ is unconstrained, i.e. $\Xi = \mathbb{R}^m$, the worst-case expectation problem $\sup _{\mathbb{P} \in \mathbb{B}_{(2),\varepsilon}^{\alpha} (\widehat{\mathbb{P}}_{N} )} \mathbb{E}^{\mathbb{P}}[\ell(\xi)]$ can also be solved more generally for any Lipschitz continuous convex function $\ell$.

\begin{theorem} \label{main_ee}
Under the condition of Theorem \ref{main_cvx}, the worst-case expectation problem
\begin{equation} \label{wcee}
\sup _{\mathbb{P} \in \mathbb{B}_{(2),\varepsilon}^{\alpha} (\widehat{\mathbb{P}}_{N} )} \mathbb{E}^{\mathbb{P}}[\ell(\xi)]
\end{equation}
is equivalent to
\begin{align}\label{eq-expectile-17}
 \frac1N  \sum_{i=1}^N \max\left\{ \ell(\widehat{\xi}_i)+ \beta L \varepsilon ,~\max_{\|e\|=1} \ell(\widehat{\xi}_i+\epsilon e)\right\}
\end{align}
with $\beta=(1-\alpha)/\alpha$.
\end{theorem}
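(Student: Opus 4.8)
\emph{Proof proposal.} My plan is to avoid the piecewise-linear reformulation \eqref{eq:expectile-2}--\eqref{eq:expectile-4} entirely and argue directly from a convenient rewriting of the expectile constraint. Writing $X=\|\widehat{\xi}-\xi\|$ for the transportation cost under a coupling $\Pi$, the defining equation of $e_\alpha$ together with the fact that $x\mapsto \alpha\E[(X-x)_+]-(1-\alpha)\E[(x-X)_+]$ is non-increasing and vanishes at $x=e_\alpha^\Pi(X)$ shows that $e_\alpha^\Pi(X)\le\varepsilon$ is equivalent to the \emph{balanced inequality} $\E^\Pi[(X-\varepsilon)_+]\le\beta\,\E^\Pi[(\varepsilon-X)_+]$ with $\beta=(1-\alpha)/\alpha$. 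Since $\widehat{\xi}$ is uniform on $\{\widehat{\xi}_1,\dots,\widehat{\xi}_N\}$ under any feasible $\Pi$, write $\mu_i$ for the conditional law of $\xi$ given $\widehat{\xi}=\widehat{\xi}_i$ and set $O_i=\E_{\mu_i}[(\|\widehat{\xi}_i-\xi\|-\varepsilon)_+]$, $U_i=\E_{\mu_i}[(\varepsilon-\|\widehat{\xi}_i-\xi\|)_+]$; then feasibility of $\Pi$ becomes the single coupled constraint $\sum_i O_i\le\beta\sum_i U_i$, the objective is $\frac1N\sum_i\E_{\mu_i}[\ell(\xi)]$, and automatically $0\le U_i\le\varepsilon$. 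I will bound the value from above and below by \eqref{eq-expectile-17}.

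For the upper bound, the crucial pointwise estimate is that for \emph{every} $\xi$, writing $s=\|\widehat{\xi}_i-\xi\|$, $o=(s-\varepsilon)_+$, $u=(\varepsilon-s)_+$, convexity of $\ell$ along the segment $[\widehat{\xi}_i,\widehat{\xi}_i+\varepsilon e]$ when $s\le\varepsilon$ and $L$-Lipschitz continuity when $s>\varepsilon$ give the uniform inequality $\ell(\xi)\le M_i+L\,o-\tfrac{u}{\varepsilon}(M_i-\ell(\widehat{\xi}_i))$, where $M_i:=\max_{\|e\|=1}\ell(\widehat{\xi}_i+\varepsilon e)$ and $M_i\ge\ell(\widehat{\xi}_i)$ since the $\varepsilon$-ball contains $\widehat{\xi}_i$. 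Taking $\E_{\mu_i}$ and averaging, $\frac1N\sum_i\E_{\mu_i}[\ell(\xi)]\le \frac1N\sum_i\big[M_i+LO_i-\tfrac{U_i}{\varepsilon}(M_i-\ell(\widehat{\xi}_i))\big]$, and I then maximize the right-hand side over all $(O_i,U_i)$ with $O_i\ge0$, $0\le U_i\le\varepsilon$, $\sum_iO_i\le\beta\sum_iU_i$ — a finite linear program. Dualizing the coupled constraint with a multiplier $\eta\ge0$ makes the Lagrangian separate across $i$; it is finite only for $\eta\ge L$, and at $\eta=L$ it evaluates to $\frac1N\sum_i\max\{M_i,\ell(\widehat{\xi}_i)+\beta L\varepsilon\}$, so weak duality yields the upper bound \eqref{eq-expectile-17}. (Integrability is fine: $\frac1N\sum_iU_i\le\varepsilon$ and the balanced inequality force $\E^\Pi[X]\le\varepsilon$, hence each $\E_{\mu_i}[\ell(\xi)]<\infty$ by Lipschitzness.)

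For the matching lower bound I construct an explicit limiting family. Let $S=\{i:M_i>\ell(\widehat{\xi}_i)+\beta L\varepsilon\}$. For $i\in S$, place all conditional mass at $\widehat{\xi}_i+\varepsilon e_i^\star$ where $\ell(\widehat{\xi}_i+\varepsilon e_i^\star)=M_i$ (attained by compactness of the sphere). For $i\notin S$, place mass $1-q_t$ at $\widehat{\xi}_i$ and mass $q_t$ at $\widehat{\xi}_i+t\,e_i^\delta$, with $e_i^\delta$ a unit direction whose recession slope exceeds $L-\delta$ (such a direction exists because $\sup_{\|e\|=1}\ell_\infty(e)=\sup_x\|\partial\ell(x)\|_*=L$ for convex $\ell$), and $q_t$ chosen so the balanced inequality holds with equality, i.e.\ $q_t(t-\varepsilon)=\beta(1-q_t)\varepsilon$, whence $q_t\to0$ and $q_t t\to\beta\varepsilon$ as $t\to\infty$. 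Letting $t\to\infty$ and then $\delta\to0$, the objective along this family tends to $\frac1N\big[\sum_{i\in S}M_i+\sum_{i\notin S}(\ell(\widehat{\xi}_i)+\beta L\varepsilon)\big]$, which equals \eqref{eq-expectile-17}. Since the worst-case problem \eqref{wcee} is a supremum, this suffices.

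The step I expect to be the crux — and the one that makes everything else routine — is the opening reformulation: recognizing that the single scalar constraint $e_\alpha^\Pi(X)\le\varepsilon$ is equivalent to the balanced overshoot/undershoot inequality $\E[(X-\varepsilon)_+]\le\beta\E[(\varepsilon-X)_+]$, which linearizes the coupling and decouples the per-sample optimization, and that the constant $\beta=(1-\alpha)/\alpha$ of \eqref{eq-expectile-17} emerges from fixing the Lagrange multiplier at $\eta=L$ (this is why it differs from the $\tfrac{1-\alpha}{2\alpha-1}$ of the finite reformulation, which is computed before one exploits that $\Xi=\R^m$ permits sending displacements to infinity). The remaining pieces — the Lipschitz/convexity pointwise bound, the finite-dimensional duality computation, and the existence of a near-steepest recession direction for a globally Lipschitz convex $\ell$ together with the (generic) non-attainment of the supremum in \eqref{wcee} — are standard. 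Notably this route requires neither piecewise linearity of $\ell$ nor the dual formulation \eqref{eq:expectile-4}.
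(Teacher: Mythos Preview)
Your argument is correct and genuinely different from the paper's. The paper proves Theorem~\ref{main_ee} by first specializing to piecewise linear $\ell$, invoking the finite-dimensional reduction of Theorem~\ref{ex-thm1} with $\Xi=\R^m$, and then carrying out a lengthy combinatorial simplification of the resulting problem (successively showing that one may take $x_{ij}\le\varepsilon$ for $j<K$, averaging across $i$, identifying an optimal index $k^*$, etc.); the general convex case is then obtained by sandwiching $\ell$ between piecewise linear approximants. Your route bypasses all of this: you linearize the expectile constraint as the overshoot/undershoot inequality $\E[(X-\varepsilon)_+]\le\beta\,\E[(\varepsilon-X)_+]$, establish the pointwise bound $\ell(\xi)\le M_i+L\,o-\tfrac{u}{\varepsilon}(M_i-\ell(\widehat{\xi}_i))$ directly from convexity and Lipschitzness, and then read off \eqref{eq-expectile-17} from a two-variable LP in $(O_i,U_i)$ whose dual is solved by $\eta=L$. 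The lower bound via the two-point ``heavy tail'' construction on $S^c$ and the sphere point on $S$ matches the paper's worst-case distributions \eqref{wcdisexp}, but you arrive at it without the piecewise linear detour. What the paper's approach buys is a unified pipeline through Theorem~\ref{th:main-2}; what yours buys is a shorter, self-contained proof that works for general Lipschitz convex $\ell$ from the outset and makes transparent why the constant is $\beta=(1-\alpha)/\alpha$ rather than $(1-\alpha)/(2\alpha-1)$. One cosmetic remark: your justification ``$M_i\ge\ell(\widehat{\xi}_i)$ since the $\varepsilon$-ball contains $\widehat{\xi}_i$'' is slightly off, since you defined $M_i$ as a maximum over the \emph{sphere}; the correct one-line reason is that a convex function on a ball attains its maximum on the boundary, so $\max_{\|e\|=1}=\max_{\|e\|\le1}\ge\ell(\widehat{\xi}_i)$.
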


Besides shedding light on the tractability of solving the worst-case expectation problem \eqref{wcee} for a more general class of loss functions, the above result shows, rather unexpectedly, that the problem \eqref{wcee} also admits a structurally simple reformulation, i.e. \eqref{eq-expectile-17}, which is comparable to the reformulation \eqref{connection} in the case of CVaR-Wasserstein ambiguity sets. Different from the reformulation \eqref{connection}, which requires comparing only the sample average of  $\ell(\widehat{\xi}_i) +  L(1-\alpha)\epsilon$ and  the sample average of $\max_{\|e\|=1} \ell(\widehat{\xi}_i+\epsilon e)$, the reformulation \eqref{eq-expectile-17} requires comparing first $\ell(\widehat{\xi}_i)+ \beta L \varepsilon$ and $\max_{\|e\|=1} \ell(\widehat{\xi}_i+\epsilon e)$ with respect to each sample $\widehat{\xi}_i$ before taking the sample average. Overall, the two formulations are very similar in terms of the total number of mathematical operations required for the computations. Some further insight about the reformulation \eqref{eq-expectile-17} can be drawn from the structure of the worst-case distributions to the problem \eqref{wcee}.
Letting $$I:=\left\{i=1,\ldots,N:   \ell(\widehat{\xi}_i)+ \beta L \varepsilon >\max_{\|e\|=1} \ell(\widehat{\xi}_i+\epsilon e) \right\},$$
one can verify that if $I\not=\emptyset$, then the discrete distributions
\begin{equation} \label{wcdisexp}
\mathbb P_n=\frac1N \sum_{i \not\in I}\delta_{\widehat{\xi}_i+ \epsilon e_i} + \frac1N\sum_{i \in I} \left[\left(1-\frac{\epsilon}{\|e_{ni}\|}\right)\delta_{\widehat{\xi}_i} + \frac{\epsilon}{\|e_{ni}\|} \delta_{\widehat{\xi}_i+   e_{ni} } \right],
\end{equation}
where $e_i=\arg\max_{e:\|e\|=1}  \ell(\widehat{\xi}_i+\epsilon e),~i\not \in I$, and $e_{ni}\in\R^m$ satisfies
$
   \lim_{n\to\infty} \frac{\ell(\widehat{\xi}_i+e_{ni}) -\ell(\widehat{\xi}_i)}{ \|e_{ni}\| } = L,
$
$i\in I$, are feasible and asymptotically optimal to the problem \eqref{wcee} as $n \rightarrow \infty$. If $I=\emptyset$, then the worst-case distribution becomes
$$
\mathbb P_{\epsilon}=\frac1N \sum_{i =1}^N \delta_{ \widehat{\xi}_i+ \epsilon e_i }~~{\rm with}~~ e_i=\arg\max_{\|e\|=1} \ell(\widehat{\xi}_i+\epsilon e),~~i=1,\ldots,N.
$$
This second case, in particular, draws the connection between the problem $\sup _{\mathbb{P} \in \mathbb{B}_{(2),\varepsilon}^{\alpha} (\widehat{\mathbb{P}}_{N} )} \mathbb{E}^{\mathbb{P}}[\ell(\xi)]$ and the worst-case expectation problem $\sup _{\mathbb{P} \in \mathbb{B}_{\varepsilon}^{{\rm wc}} (\widehat{\mathbb{P}}_{N} )} \mathbb{E}^{\mathbb{P}}[\ell(\xi)]$, since $\mathbb{P}_{\epsilon}$ is the worst-case distribution of the latter.

To best summarize and illustrate these distributions and their rich structure, we provide in Figure \ref{worst-dis} an example based on three sample points. Each subfigure presents one representative structure of the worst-case distributions (or more precisely, distributions that are asymptotically optimal to the problem \eqref{wcee}). It is most useful to view these distributions by conditioning on each sample. In particular, the conditional distribution is either a two point mass distribution with an arbitrary small weight $p \rightarrow 0$ put on a point that is arbitrary far from a sample and the remaining weight $1-p$ put on the sample, or a distribution concentrated at a single point that is $\epsilon$-away from the sample. The former is illustrated in the figure by placing no boundary (the cylinder) from a sample point, whereas the latter is illustrated by a cylinder with a fixed radius $\epsilon$. These distributions are considerably richer than those of the Wasserstein worst-case expectation problem $\sup _{\mathbb{P} \in \mathbb{B}^{\rm W}_{\varepsilon} (\widehat{\mathbb{P}}_{N} )} \mathbb{E}^{\mathbb{P}}[\ell(\xi)]$ and the CVaR-Wasserstein worst-case expectation problem $\sup _{\mathbb{P} \in \mathbb{B}_{(1),\varepsilon}^{\alpha} (\widehat{\mathbb{P}}_{N} )} \mathbb{E}^{\mathbb{P}}[\ell(\xi)]$ in that

\begin{enumerate}
\item case I corresponds to the worst-case distributions of $\sup _{\mathbb{P} \in \mathbb{B}^{\rm W}_{\varepsilon} (\widehat{\mathbb{P}}_{N} )} \mathbb{E}^{\mathbb{P}}[\ell(\xi)]$, where conditional distributions with respect to all samples are not attainable,
\item case I and IV correspond to the worst-case distributions of $\sup _{\mathbb{P} \in \mathbb{B}_{(1),\varepsilon}^{\alpha} (\widehat{\mathbb{P}}_{N} )} \mathbb{E}^{\mathbb{P}}[\ell(\xi)]$, where conditional distributions with respect to all samples are either all unattainable or all attainable,
\item and case I to IV correspond to the worst-case distributions of $\sup _{\mathbb{P} \in \mathbb{B}_{(2),\varepsilon}^{\alpha} (\widehat{\mathbb{P}}_{N} )} \mathbb{E}^{\mathbb{P}}[\ell(\xi)]$, where the conditional distribution with respect to each sample is either attainable or unattainable.
\end{enumerate}

In short, the structure of the worst-case distributions for the expectile-Wasserstein worst-case expectation problem $\sup _{\mathbb{P} \in \mathbb{B}_{(2),\varepsilon}^{\alpha} (\widehat{\mathbb{P}}_{N} )} \mathbb{E}^{\mathbb{P}}[\ell(\xi)]$ can flexibly vary with respect to each sample.

\begin{figure}[h]
\centering
\subfigure[Case I]{
\includegraphics[scale=0.21]{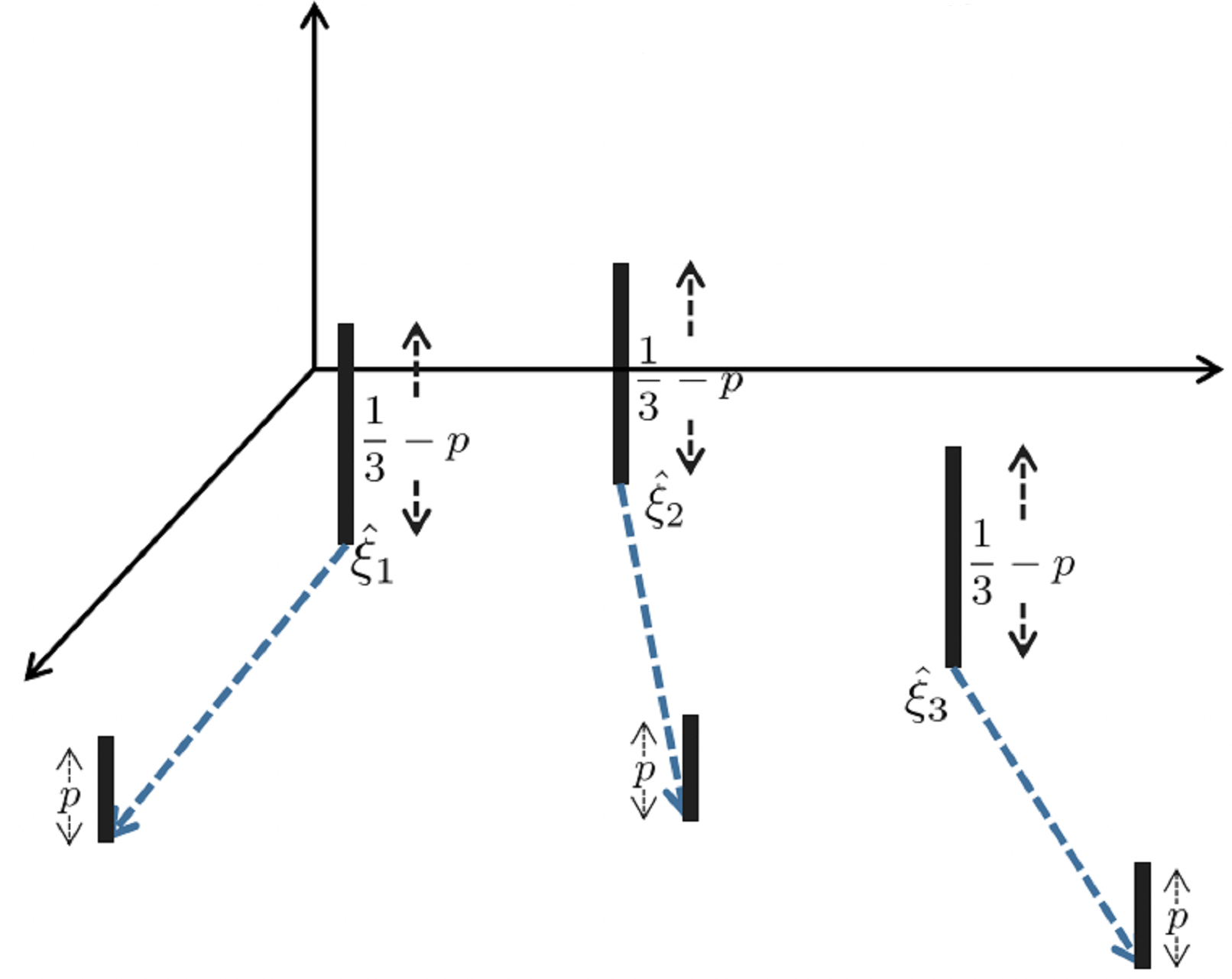}
}
\subfigure[Case II]{
\includegraphics[scale=0.21]{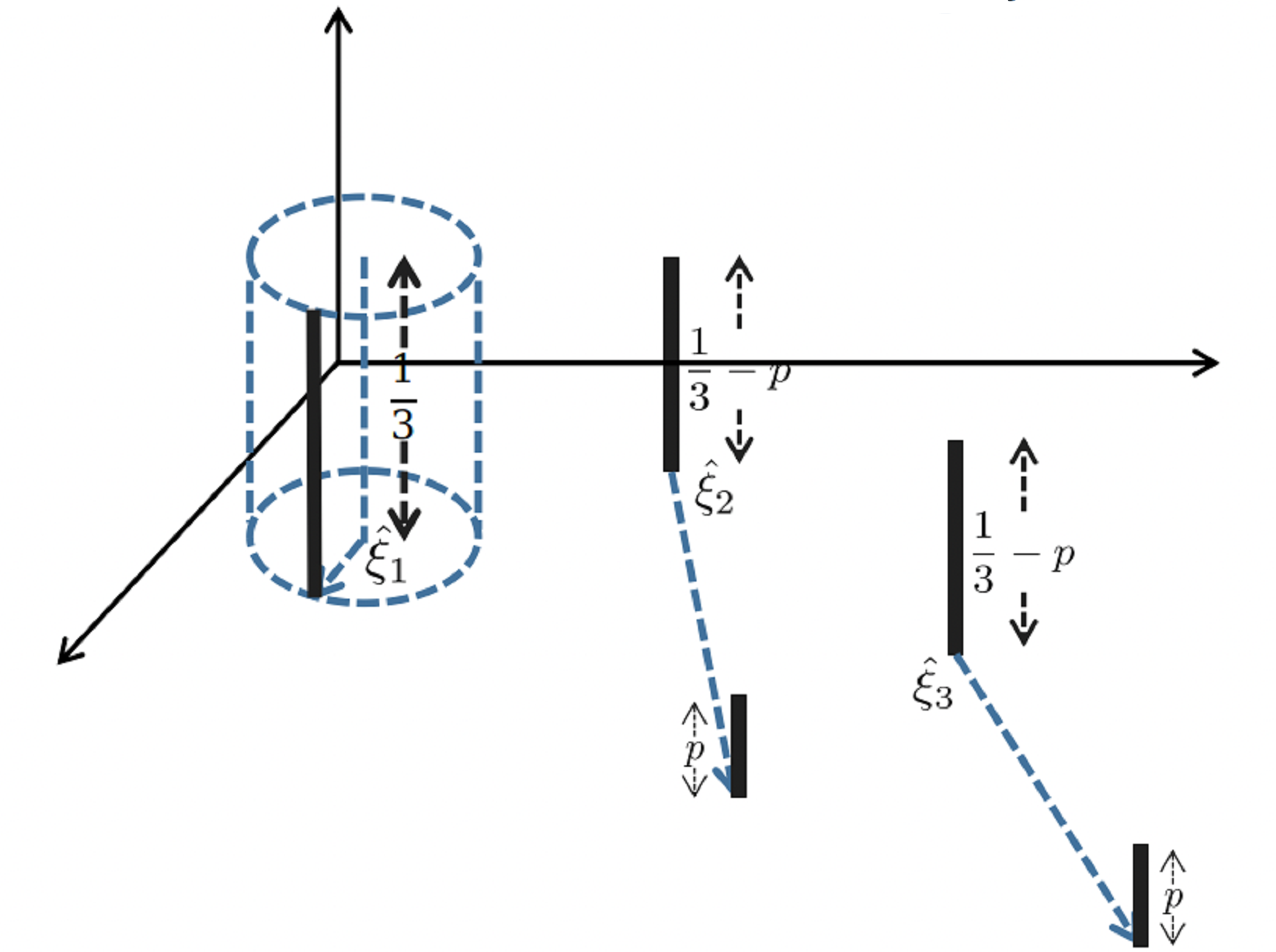}
 }
\subfigure[Case III]{
\includegraphics[scale=0.21]{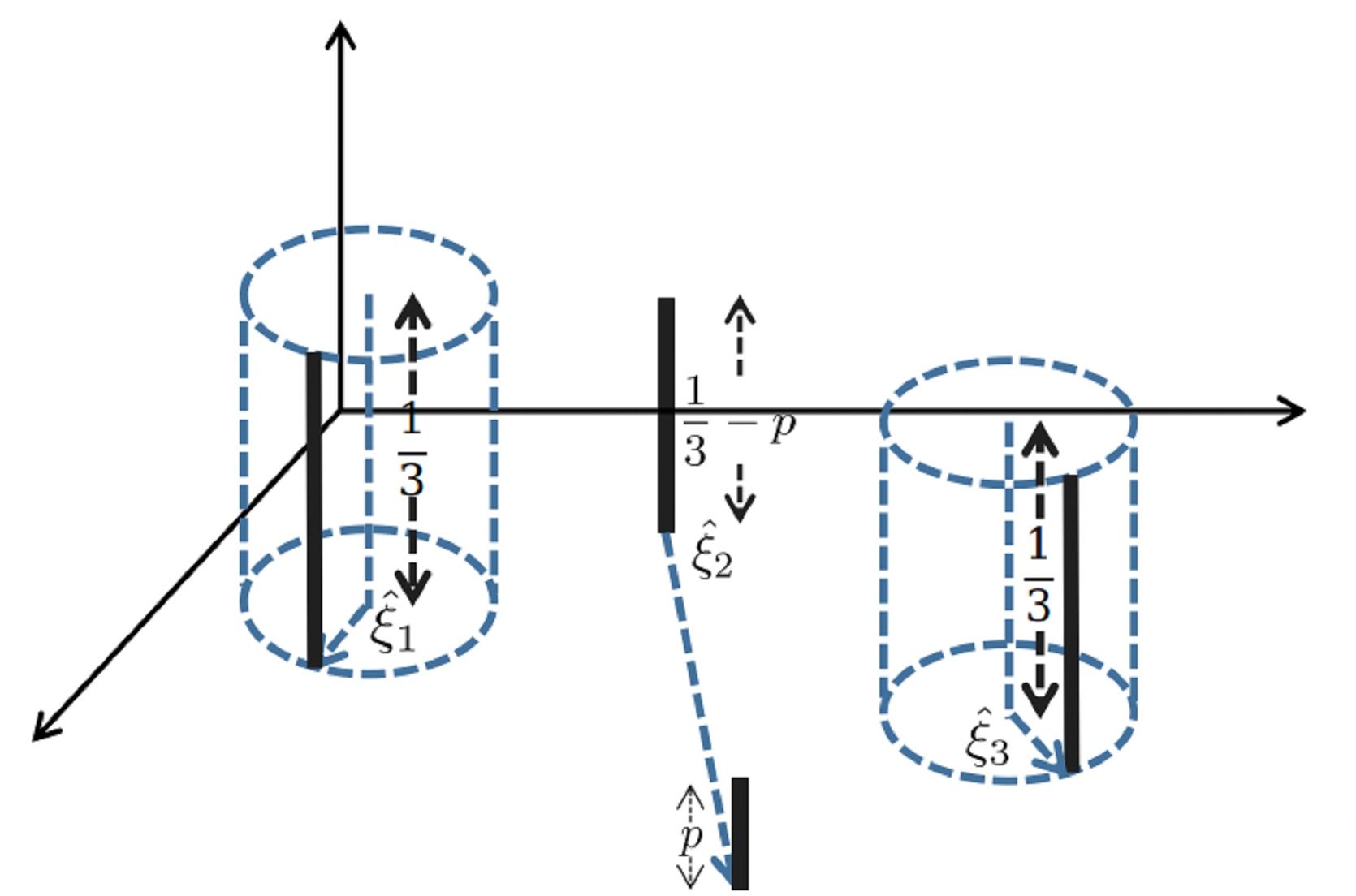}
 }
\subfigure[Case IV]{
\includegraphics[scale=0.18]{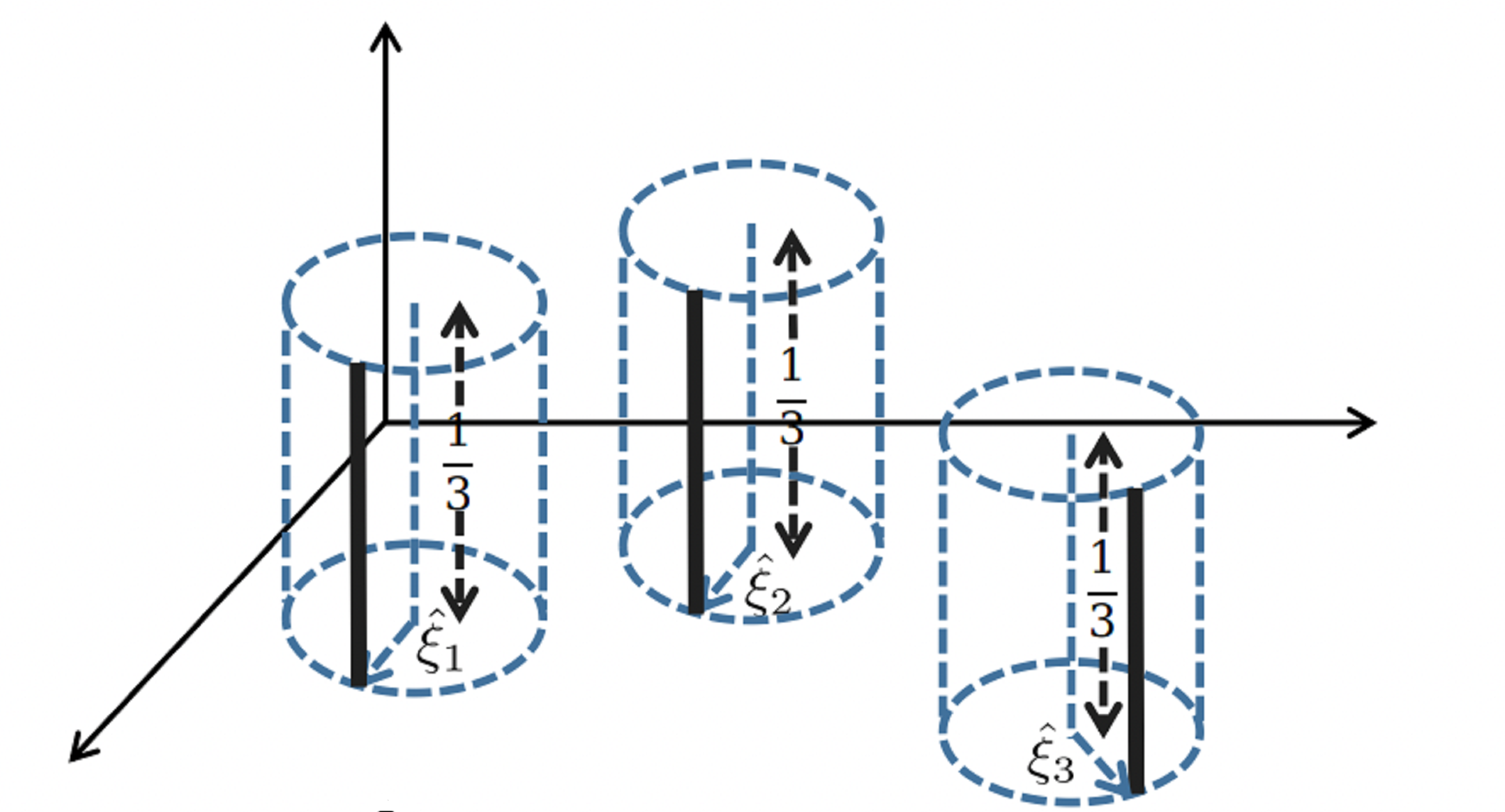}
 }
 \centering
\caption{Worst-case distributions for $\sup _{\mathbb{P} \in \mathbb{B}_{(2),\varepsilon}^{\alpha} (\widehat{\mathbb{P}}_{N} )} \mathbb{E}^{\mathbb{P}}[\ell(\xi)]$}
\end{figure}

With this observation, one can also interpret the parameter $\alpha \in (1/2,1)$ in the expectile-Wasserstein ambiguity sets $\mathbb{B}_{(2),\varepsilon}^{\alpha} (\widehat{\mathbb{P}}_{N} )$ as a parameter that fine tunes the number of sample points contaminated by only bounded perturbations. As $\alpha \rightarrow 1$, the number of such data points increases and the problem becomes increasingly data-driven and less conservative.
The degree of conservativeness, reflected by the worst-case expected value, would more subtly depend on such a structural change of the worst-case distributions, as $\alpha$ varies. This can be seen by comparing the formulation \eqref{eq-expectile-17} for the problem $\sup _{\mathbb{P} \in \mathbb{B}_{(2),\varepsilon}^{\alpha} (\widehat{\mathbb{P}}_{N} )} \mathbb{E}^{\mathbb{P}}[\ell(\xi)]$ and the formulation \eqref{connection} for the problem $\sup _{\mathbb{P} \in \mathbb{B}_{(1),\varepsilon}^{\alpha} (\widehat{\mathbb{P}}_{N} )} \mathbb{E}^{\mathbb{P}}[\ell(\xi)]$. The former depends more nonlinearly on $\alpha$ in a convex fashion, whereas the latter is simply a two-piece linear function in $\alpha$. Figure \ref{optimalvalue} illustrates this difference.

\begin{figure}[h]
\begin{center}
\includegraphics[scale=0.42]{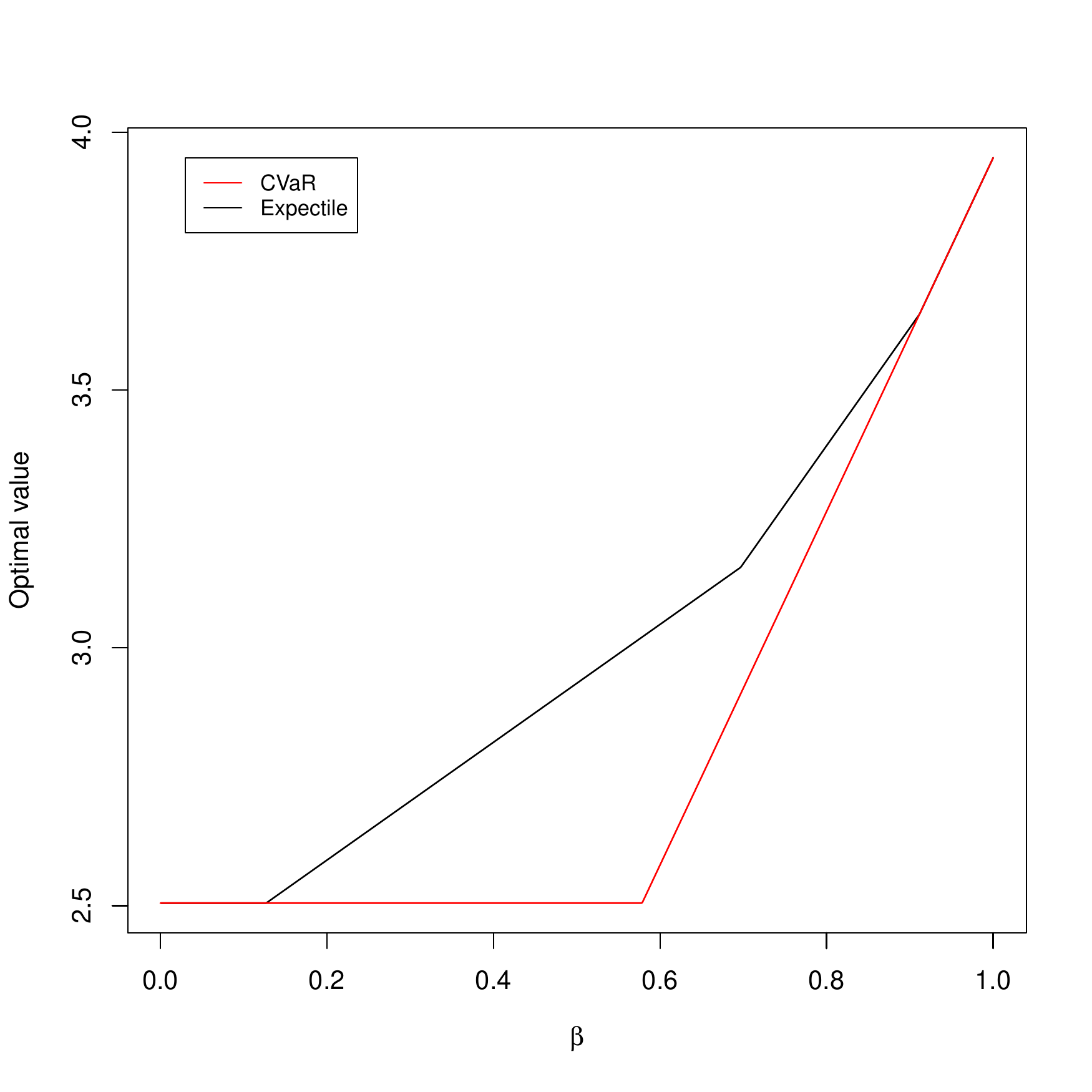}
\includegraphics[scale=0.42]{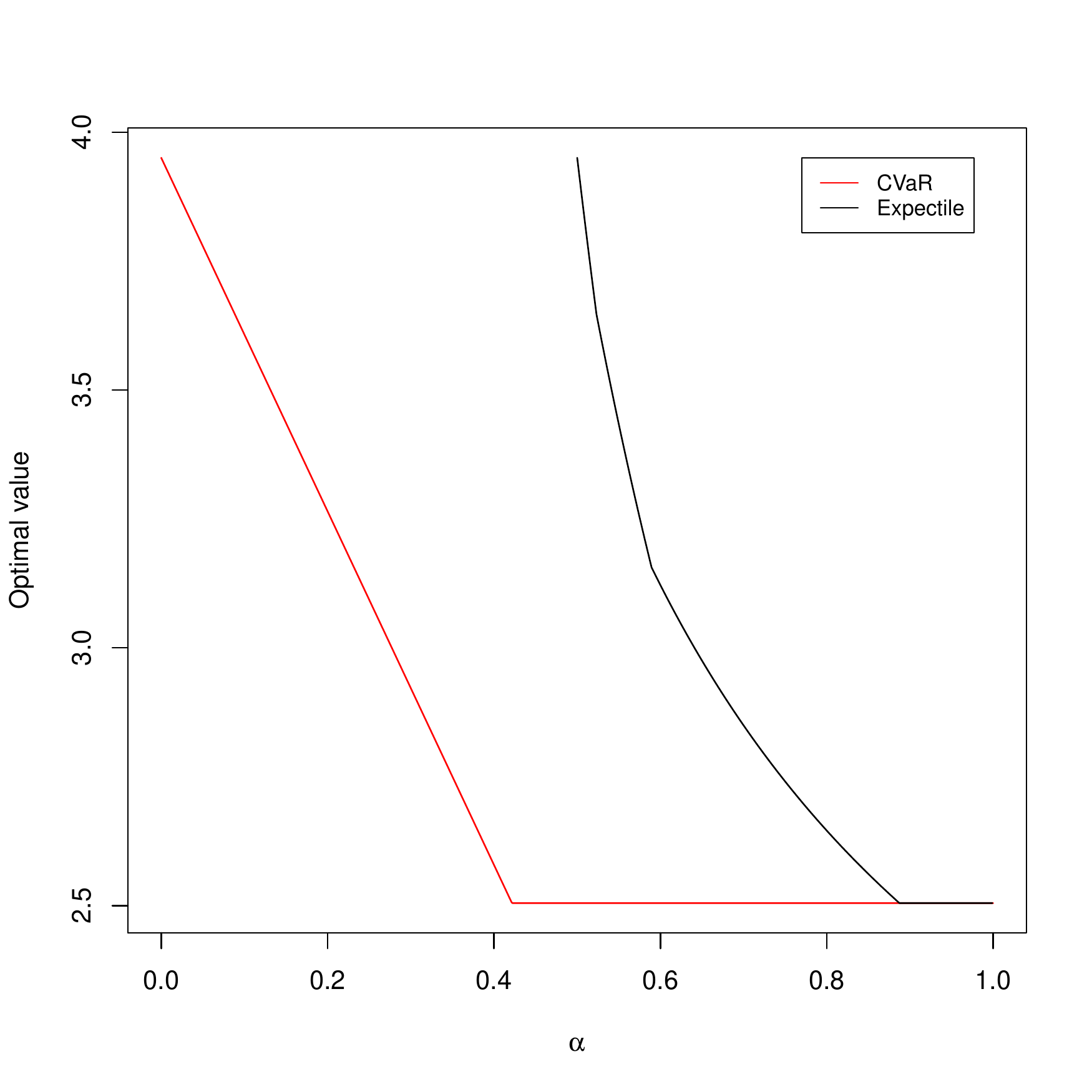}
\caption{Optimal values of $\sup _{\mathbb{P} \in \mathbb{B}_{(1),\varepsilon}^{\alpha} (\widehat{\mathbb{P}}_{N} )} \mathbb{E}^{\mathbb{P}}[f(x^\top\xi)]$ (denoted by CVaR) and $\sup _{\mathbb{P} \in \mathbb{B}_{(2),\varepsilon}^{\alpha} (\widehat{\mathbb{P}}_{N} )} \mathbb{E}^{\mathbb{P}}[f(x^\top\xi)]$ (denoted by expectiles), where $f(t)=t^21_{\{\|t\|\le 1\}} + (7|t|-6)1_{\{\|t\|> 1\}}$ with $x=(1,2,-1)^\top$, $\varepsilon=0.2$, and 
$\widehat{\mathbb P}_N:= \frac13\sum_{i=1}^3\delta_{\widehat{\xi}_i}$ with $\widehat{\xi}_1=(0.2,-0.32,0.5)^\top,$ $
\widehat{\xi}_2=(-0.2,-0.2,0.2)^\top$ and $\widehat{\xi}_3=(0.3,-0.1,-0.1)^\top$.
Left: The lines are with respect to $\beta=\beta_1=1-\alpha$ (CVaR) and $\beta=\beta_2=(1-\alpha)/\alpha$ (Expectile). Right:  The lines are with respect to $\alpha$.  }
\label{optimalvalue}
\end{center}
\end{figure}

As done in the previous section, we identify below the conditions under which there always exists $\alpha < 1$ such that the worst-case distribution exists. It is worth noting that the condition is stronger than the condition identified in Corollary \ref{wcdis}, but the two coincide when $\min_{x\in\R^m}\|\partial \ell(x)\| > 0$, i.e. the loss function $\ell$ does not contain any constant piece.

\begin{corollary} \label{wcdis-expectile} Under the condition of Theorem \ref{main_cvx},  if
$ \max_{\|e\| \le 1} \ell(\widehat{\xi}_i+\epsilon e) > \ell(\widehat{\xi}_i) $ for all $i=1,\ldots,N$, then there always exists $\alpha \in (1/2,1)$ such that the worst-case expectation problem \eqref{wcexptile} is attainable, i.e. the worst-case distribution exists.
\end{corollary}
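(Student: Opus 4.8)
The plan is to read off attainability from the explicit reformulation in Theorem~\ref{main_ee} together with the description of the worst-case distributions given immediately after it. Under the present hypotheses ($\ell$ convex with $L:=\max_{x\in\R^m}\|\partial\ell(x)\|_*<\infty$ and $\Xi=\R^m$), Theorem~\ref{main_ee} gives that the optimal value of \eqref{wcexptile} equals $\frac1N\sum_{i=1}^N\max\{\ell(\widehat{\xi}_i)+\beta L\varepsilon,\ M_i\}$ with $\beta=(1-\alpha)/\alpha$ and $M_i:=\max_{\|e\|=1}\ell(\widehat{\xi}_i+\varepsilon e)$, and the ensuing discussion shows that when the index set $I:=\{i:\ell(\widehat{\xi}_i)+\beta L\varepsilon>M_i\}$ is empty, the discrete measure $\mathbb{P}_\varepsilon=\frac1N\sum_{i=1}^N\delta_{\widehat{\xi}_i+\varepsilon e_i}$, with $e_i\in\arg\max_{\|e\|=1}\ell(\widehat{\xi}_i+\varepsilon e)$, is feasible and optimal. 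Hence it suffices to exhibit some $\alpha\in(1/2,1)$ for which $I=\emptyset$.

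The first step is to normalize the quantity appearing in the hypothesis. Since $e\mapsto\ell(\widehat{\xi}_i+\varepsilon e)$ is convex, its maximum over the unit ball $\{\|e\|\le1\}$ is attained at an extreme point of the ball, which lies on the unit sphere; combined with the trivial reverse inequality this yields $\max_{\|e\|\le1}\ell(\widehat{\xi}_i+\varepsilon e)=M_i$. The hypothesis therefore reads $\delta_i:=M_i-\ell(\widehat{\xi}_i)>0$ for every $i=1,\dots,N$, and I put $\delta:=\min_{1\le i\le N}\delta_i>0$. (In particular the hypothesis forces $L>0$ and $\varepsilon>0$.)

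The second step is the choice of $\alpha$. Because $\beta=(1-\alpha)/\alpha\downarrow0$ as $\alpha\uparrow1$ and $L\varepsilon/(L\varepsilon+\delta)<1$, every $\alpha$ with $\max\{1/2,\ L\varepsilon/(L\varepsilon+\delta)\}<\alpha<1$ satisfies $\beta L\varepsilon\le\delta$, hence $\ell(\widehat{\xi}_i)+\beta L\varepsilon\le\ell(\widehat{\xi}_i)+\delta_i=M_i$ for all $i$, i.e. $I=\emptyset$. For such $\alpha$ I would then verify attainability directly: with the coupling $\Pi_\varepsilon=\frac1N\sum_{i=1}^N\delta_{(\widehat{\xi}_i,\widehat{\xi}_i+\varepsilon e_i)}$, which has marginals $\widehat{\mathbb{P}}_{N}$ and $\mathbb{P}_\varepsilon$, the transportation cost $\|\widehat{\xi}-\xi\|$ is almost surely the constant $\varepsilon$, so $e_\alpha^{\Pi_\varepsilon}(\|\widehat{\xi}-\xi\|)=\varepsilon$ by translation invariance and $e_\alpha(0)=0$, which gives $d_{e_\alpha}(\widehat{\mathbb{P}}_{N},\mathbb{P}_\varepsilon)\le\varepsilon$, i.e. $\mathbb{P}_\varepsilon\in\mathbb{B}_{(2),\varepsilon}^{\alpha}(\widehat{\mathbb{P}}_{N})$; moreover $\mathbb{E}^{\mathbb{P}_\varepsilon}[\ell(\xi)]=\frac1N\sum_{i=1}^N M_i$, which by Theorem~\ref{main_ee} and $I=\emptyset$ coincides with the optimal value of \eqref{wcexptile}. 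Thus $\mathbb{P}_\varepsilon$ attains the supremum.

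I expect the one point deserving care to be the bookkeeping that makes ``$\beta$ small $\Rightarrow I=\emptyset$'' rigorous: it relies on the \emph{uniform} positive gap $\delta$, which is available precisely because the hypothesis $\max_{\|e\|\le1}\ell(\widehat{\xi}_i+\varepsilon e)>\ell(\widehat{\xi}_i)$ is imposed for every $i$ (this is exactly where the condition is genuinely stronger than that of Corollary~\ref{wcdis}, where a single index suffices but only asymptotic optimality is obtained), together with the finiteness of $L$. The remaining ingredients are routine: the equality of the two maxima is a one-line convexity argument, and the feasibility of $\mathbb{P}_\varepsilon$ follows immediately from the fact that the transportation cost under $\Pi_\varepsilon$ is degenerate, so that any coherent risk measure — in particular $e_\alpha$ — evaluates it to $\varepsilon$.
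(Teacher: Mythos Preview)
Your proposal is correct and follows essentially the same route as the paper's own proof: use Theorem~\ref{main_ee}, push $\beta=(1-\alpha)/\alpha$ to zero so that the index set $I$ becomes empty, and then exhibit $\mathbb{P}_\varepsilon=\frac1N\sum_{i=1}^N\delta_{\widehat{\xi}_i+\varepsilon e_i}$ as the attaining distribution. Your version is somewhat more explicit (the convexity argument identifying $\max_{\|e\|\le1}$ with $\max_{\|e\|=1}$, the uniform gap $\delta$, and the direct verification of feasibility via the degenerate coupling), but the underlying idea is identical.
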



Finally, it is clear that Theorem \ref{main_ee} can be applied, just as Theorem \ref{main_cvx}, to solve problems in Section \ref{illexample} as convex programs.

\begin{corollary} \label{reduced2}
In the case where $\ell(\xi) = f(x^\top \xi)$ and $f$ is a Lipschitz continuous convex function in $\mathbb{R}$, the worst-case expectation problem \eqref{wcee} can be solved by
\begin{align}\label{eq-expectile-19}
\min_{x \in \mathbb{X}} \frac1N  \sum_{i=1}^N \max\left\{f(x^\top \xi_i)+  {\rm Lip}(f) \|x\|_* \beta \varepsilon , ~f_1 (x^\top  \xi_i -\epsilon\|x\|_{*} ),~f_2 (x^\top  \xi_i + \epsilon\|x\|_{*}) \right\},
\end{align}
where $f_1(t)= f(t)1_{\{t<t_0\}}+ f(t_0)1_{\{t\ge t_0\}} $ and $f_2(t) = f(t_0)1_{\{t\le t_0\}}+ f(t)1_{\{t> t_0\}} $ and $t_0\in [-\infty,\infty]$ is such that $f$ is decreasing on $(-\infty,t_0)$ and increasing on $(t_0,\infty)$.
\end{corollary}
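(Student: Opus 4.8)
The plan is to derive \eqref{eq-expectile-19} by specializing Theorem \ref{main_ee} to the composite loss $\ell(\xi)=f(x^\top\xi)$ and then carrying out the resulting one-dimensional optimization in closed form. First I would fix the decision $x\in\mathbb{X}$ and check that $\ell=f(x^\top\cdot)$ meets the hypotheses of Theorem \ref{main_cvx} (hence of Theorem \ref{main_ee}): $\ell$ is convex as a composition of the convex function $f$ with a linear map, and since every subgradient of $\ell$ at $\xi$ has the form $g\,x$ with $g\in\partial f(x^\top\xi)\subseteq[-{\rm Lip}(f),{\rm Lip}(f)]$, we get $\max_{v\in\partial\ell(\xi)}\|v\|_*=\bigl(\max_{g\in\partial f(x^\top\xi)}|g|\bigr)\|x\|_*$, whose supremum over $\xi\in\R^m$ is $L:={\rm Lip}(f)\,\|x\|_*<\infty$; together with $\Xi=\R^m$ this is exactly the condition invoked by Theorem \ref{main_cvx}. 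Theorem \ref{main_ee} then yields, for each fixed $x$,
$$
\sup_{\mathbb{P}\in\mathbb{B}_{(2),\varepsilon}^{\alpha}(\widehat{\mathbb{P}}_{N})}\mathbb{E}^{\mathbb{P}}[\ell(\xi)]
=\frac1N\sum_{i=1}^N\max\Bigl\{\,f(x^\top\widehat{\xi}_i)+\beta\,{\rm Lip}(f)\,\|x\|_*\,\varepsilon,\ \max_{\|e\|=1}f\bigl(x^\top\widehat{\xi}_i+\epsilon\,x^\top e\bigr)\Bigr\},
$$
with $\beta=(1-\alpha)/\alpha$. The first of the three terms in \eqref{eq-expectile-19} already appears, so the remaining task is to evaluate $M_i:=\max_{\|e\|=1}f\bigl(x^\top\widehat{\xi}_i+\epsilon\,x^\top e\bigr)$ and then to take $\min_{x\in\mathbb{X}}$ of both sides.

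Next I would determine the range of the scalar $x^\top e$ as $e$ runs over $\{\|e\|=1\}$. By definition of the dual norm, $\sup_{\|e\|=1}x^\top e=\|x\|_*$ and $\inf_{\|e\|=1}x^\top e=-\|x\|_*$; for $m\ge2$ the unit sphere is connected and $e\mapsto x^\top e$ is continuous, so by the intermediate value theorem $\{x^\top e:\|e\|=1\}=[-\|x\|_*,\|x\|_*]$, while for $m=1$ it is the two-point set $\{-\|x\|_*,\|x\|_*\}$. In either case, since $f$ is convex it attains its maximum over the corresponding set of arguments at an endpoint, so
$$
M_i=\max\bigl\{f(a_i),\,f(b_i)\bigr\},\qquad a_i:=x^\top\widehat{\xi}_i-\epsilon\|x\|_*,\quad b_i:=x^\top\widehat{\xi}_i+\epsilon\|x\|_*.
$$

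Then I would establish the elementary identity $\max\{f(a),f(b)\}=\max\{f_1(a),f_2(b)\}$ for all $a\le b$, which is where the functions $f_1,f_2$ and the split point $t_0$ enter. This is a three-case check: if $b\le t_0$, then $f$ is nonincreasing on $[a,b]$ so $\max\{f(a),f(b)\}=f(a)$, while $f_1(a)=f(a)$ and $f_2(b)=f(t_0)\le f(a)$; if $a\ge t_0$ the situation is symmetric; and if $a<t_0<b$ then $f_1(a)=f(a)$ and $f_2(b)=f(b)$, so both sides equal $\max\{f(a),f(b)\}$. Applying this with $a=a_i$, $b=b_i$ gives $M_i=\max\bigl\{f_1(x^\top\widehat{\xi}_i-\epsilon\|x\|_*),\,f_2(x^\top\widehat{\xi}_i+\epsilon\|x\|_*)\bigr\}$; substituting back into the display above and taking $\min_{x\in\mathbb{X}}$ produces precisely \eqref{eq-expectile-19}. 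I do not expect a genuine obstacle here — the result is essentially a clean instantiation of Theorem \ref{main_ee} — but the points needing care are the degenerate one-dimensional case in the range computation and making the $f_1$--$f_2$ case analysis airtight at the boundary value $t_0$, including the limiting cases $t_0=\pm\infty$ where $f$ is monotone and one of $f_1,f_2$ collapses to $f$ and the other to the constant $\inf f$.
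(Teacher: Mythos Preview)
Your proposal is correct and follows essentially the same approach as the paper, which simply refers back to the proof of Corollary~\ref{reduced1}: apply Theorem~\ref{main_ee} to $\ell(\xi)=f(x^\top\xi)$, identify $L={\rm Lip}(f)\|x\|_*$, and reduce $\max_{\|e\|=1}f(x^\top\widehat{\xi}_i+\epsilon\,x^\top e)$ to $\max\{f_1(x^\top\widehat{\xi}_i-\epsilon\|x\|_*),\,f_2(x^\top\widehat{\xi}_i+\epsilon\|x\|_*)\}$. Your treatment is in fact more explicit than the paper's, carefully handling the range of $x^\top e$ over the unit sphere and the boundary cases $t_0=\pm\infty$.
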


\begin{example} \label{applications} {(continue Example (ii), Section \ref{illexample})} Applying Corollary \ref{reduced2} to the robust portfolio optimization problem
$$ \min_{x \in \mathbb{X}} \sup _{\mathbb{P} \in \mathbb{B}_{(2),\varepsilon}^{\alpha}(\widehat{\mathbb{P}}_{N} )}   \mathbb{E}^{\mathbb{P}}[-u( \xi^\top x)],$$
where $u$ is Lipschitz continuous, we obtain the following convex program
$$
\min_{x \in \mathbb{X}}
 \frac1N\sum_{i=1}^N \max \left\{ -u(x^\top \xi_i) +  {\rm Lip}(u)\|x\|_{*}(1-\alpha)\epsilon,
 -u (x^\top  \xi_i -\epsilon\|x\|_{*} ) \right\}.
$$
\end{example}

\begin{example} {(continue Example (iii), Section \ref{illexample})} Applying Corollary \ref{reduced2} to the distributionally robust regression problem
$$ \min_{\beta \in {\cal B}} \sup _{\mathbb{P} \in \mathbb{B}_{(2),\varepsilon}^{\alpha}(\widehat{\mathbb{P}}_{N} )}
\mathbb{E}^\mathbb{P}[ \ell(\beta^\top \xi^x - \xi^y) ]$$
and following Example \ref{ex-ml}, we obtain
\begin{align*}
\min_{\beta \in {\cal B}}\frac{1}{N}\sum_{i=1}^{N} \max & \left\{ \begin{array}{l}
\ell(\beta^{\top}\xi_{i}^{x}-\xi_{i}^{y})+{\rm Lip}(\ell) \| (\beta,-1)\| _{*}(1-\alpha)\varepsilon,\\
\ell_{1}(\beta^{\top}\xi_{i}^{x}-\xi_{i}^{y}-\| (\beta,-1)\| _{*}\varepsilon),
\ell_{2}(\beta^{\top}\xi_{i}^{x}-\xi_{i}^{y}+\| (\beta,-1)\| _{*}\varepsilon)
\end{array}\right\},
\end{align*}
whereas in the case of distributionally robust classification problem, i.e.
$$ \min_{\beta \in {\cal B}} \sup _{\mathbb{P} \in \mathbb{B}_{(2),\varepsilon}^{\alpha}(\widehat{\mathbb{P}}_{N} )}
\mathbb{E}^\mathbb{P}[ \ell(\xi^y \cdot \beta^\top \xi^x) ], $$
we obtain
\[
\min_{\beta \in {\cal B}}\frac{1}{N}\sum_{i=1}^{N} \max\left\{ \ell(\xi_{i}^{y}\cdot\beta^{\top}\xi_{i}^{x})+{\rm Lip}(\ell)  \| \beta\| _{*}(1-\alpha)\varepsilon, ~\ell(\xi_{i}^{y}\cdot\beta^{\top}\xi_{i}^{x}-\| \beta\| _{*}\varepsilon)\right\}.
\]
\end{example}

\section{Conclusion}
In this paper, we propose a general framework for identifying families of Wasserstein metrics suited for data-driven distributionally robust optimization. We show that our framework offers a fruitful opportunity to design novel Wasserstein DRO models that can be theoretically sound and practically well-motivated. Necessary analysis is provided in this paper to facilitate tractable reformulations of the Wasserstein DRO models that adopt coherent Wasserstein metrics. We demonstrate the application of our framework using ambiguity sets constructed from CVaR- and expectile-Wasserstein metrics and provide an in-depth discussion of the connection between the new Wasserstein DRO models and the type-1 Wasserstein DRO model. The former generalizes the latter in an intuitive way, having a more enriched structure of optimization and worst-case distributions. In addition to several applications covered in this paper, the framework established in this paper shall provide an important basis for exploring further potential of Wasserstein DRO in a broader set of applications.

\section{Appendix}

\subsection{Proofs of Section \ref{sec:2}}

\noindent{\bf Proof of Proposition \ref{pro_eu}.} We show the result by proving the following two statements.
\begin{itemize}
 \item [(i)] For any $\alpha\in A$, if $\rho_\alpha$ takes finite value on $L^1$, then $\ell_\alpha$ is a Lipschitz function.
\item [(ii)] If $\ell_\alpha$ is a Lipschitz function for any $\alpha\in A$, then there does not exist a subsequence of $\rho_{\alpha}$, $\alpha\in A$,  converging to the worst-case risk measure $\esssup$.
\end{itemize}
Obviously, with statements (i) and (ii),  the {\bf robustness} and {\bf data-drivenness} of {\bf DD-DRW} could not be satisfied simultaneously.  We next show  (i) and (ii).

To see (i),  we show that $\ell_\alpha$ is a Lipschitz function by contradiction. Suppose not, i.e.,  $\ell_\alpha$ is not Lipschitz.  We will construct a random variable $X$ in $L^1$ whose risk measure $\rho_\alpha(X)$ takes infinity value.  By the increase and convexity of $\ell_\alpha$, we have $\ell_\alpha'$ is non-decreasing and unbounded, and thus, $\lim_{x\to\infty} \ell_\alpha'(x)=\infty,$ where  $\ell_\alpha'$ is the left-derivative of $\ell_\alpha$.
It then follows from the convexity of $\ell_\alpha$ that $ \ell_\alpha(x) \ge \ell_\alpha(x/2)   +  \ell_\alpha'(x/2)x/2$ for all $x>0$, and thus,
 $$\lim_{x\to\infty} \frac{\ell_\alpha(x)}{x} \ge \lim_{x\to\infty}  \frac12\ell_\alpha'\left(\frac x2\right) =\infty.$$
 Therefore, for each $n\in\N$, there exists $x_n\ge n$  such that $\ell_\alpha (x_n)> 2^n x_n$, $n\in\N$.
 Define a random variable $X$ such that $\p(X=x_n)=  \frac c{2^nx_n}$, $n\ge 1$, where $c=1/ \sum_{n=1}^\infty (2^nx_n)^{-1}$. One can verify that $X\in L^1$ as $\E[X] = \sum_{n=1}^\infty \frac c{2^n}=c<\infty$, and meanwhile
  $$\rho_\alpha(X)=\ell_\alpha^{-1}\left(\E[\ell_\alpha(X)] \right)= \ell_\alpha^{-1}\left(\sum_{n=1}^{\infty}  \frac c{2^nx_n} \ell_\alpha(x_n)\right)\ge\ell_\alpha^{-1}\left( \sum_{n=1}^{\infty}  c \right)=\infty,$$
  where the inequality follows from $\ell_\alpha (x_n)> 2^n x_n$, $n\in\N$.
 Therefore, this yields a contradiction to that $\rho_\alpha$ takes  finite value on $L^1$, and thus, (i) holds.

 To see (ii), we also show it by contradiction. Suppose that there exist $\alpha_n\in A$, $n\in\N$, such that  $\rho_{\alpha_n}$  converges to $\esssup$. We first assert that there must exist a subsequence of $\{\alpha_n,n\in\N\}$, say $\beta_n$, $n\in\N$, and  $c\in\R$ such that
 \begin{align}\label{eq:con-1}
 \ell_{\beta_n} ~{\rm converges ~to~} \infty1_{\{(\cdot)>c\}}~~{\rm or}~~ \infty1_{\{(\cdot)\ge c\}} ~~{\rm as}~ ~n\to\infty.
 \end{align}
 {\bf (Proving  \eqref{eq:con-1})}. To see \eqref{eq:con-1},  for any fixed $x>0$ and an event $A$  with $\p(A)=p\in (0,1)$, take $Y = x 1_{A}$. By  $\rho_{\alpha_n} \to \esssup$ as $n\to\infty$, we have
\begin{equation}\label{eq:220708-1}\rho_{\alpha_n}(Y) = \ell_{\alpha_n}^{-1}( \E [\ell_{\alpha_n}(Y)] ) = \ell_{\alpha_n}^{-1} ( p \ell_{\alpha_n}(x ) ) \to  x  ~~{\rm as}~ ~n\to\infty.\end{equation}
For the chosen $x $, denote $z_{\alpha_n}:=\ell_{\alpha_n}(x )$, $n\in\N$. Note that if $\{z_{\alpha_n},n\in\N\}$  is bounded, then  there exists a subsequence $\alpha_n'$ of $\alpha_n$, $n\in\N$, such that $z_{\alpha_n'} \to a$, i.e., $ \ell_{\alpha_n}(x )\to a$ for some $a\in\R$.  By the increase and convexity of $\ell_{\alpha_n'}$, \eqref{eq:220708-1} implies $  p a =a$ and thus $a=0$.
    Hence, $\lim_{n\to\infty}\ell_{\alpha_n}(x)=0.$
Take $c$ as the supremum of $x$ such that $\{\ell_{\alpha_n}(x),n\in\N\}$ is bounded.
Then we have that   $\lim_{n\to\infty}\ell_{\alpha_n}(x)=0$ for $x<c$, and $\{\ell_{\alpha_n}(x),n\in\N\}$ is not bounded for $x>c$.
We consider the following two cases.
\begin{itemize}
\item [(a)]
 If $\{\ell_{\alpha_n}(c),n\in\N\}$ is not bounded, then there exist $\alpha_n'$ such that $\ell_{\alpha_n'}(c)\to\infty$. By the increase of $\ell_{\alpha_n'}$, we have  $\lim_{n\to\infty}\ell_{\alpha_n'} (x)=\infty$ for $x\ge c$. Hence, $\ell_{\alpha_n'} (x)~{\rm converges ~to~}   \infty1_{\{(\cdot)\ge c\}}$  as $n\to\infty$, that is, \eqref{eq:con-1} holds with $\beta_n=\alpha_n'$.
\item [(b)]If  $\{\ell_{\alpha_n}(c),n\in\N\}$ is bounded, then by $\{\ell_{\alpha_n}(x),n\in\N\}$ is not bounded for $x>c$, for each $k\in \N$, we can find $\alpha_{n_k}$ such that  $\ell_{\alpha_{n_k}}(x+1/k)>k$. Then we have $\lim_{k\to\infty}\ell_{\alpha_{n_k}} (x)=\infty$ for $x>c$. Hence, $\ell_{\alpha_{n_k}}$ converges to $\infty1_{\{(\cdot)>c\}}$  as $k\to\infty$, that is, \eqref{eq:con-1} holds with $\beta_k=\alpha_{n_k}$.
\end{itemize}
Combining the above two cases, we have \eqref{eq:con-1} holds.

 Now with \eqref{eq:con-1}, let $X$ be a random variable such that $\p(X=d) = p =1-\p(X=0)$  for some  $d > c$ and $p\in (0,1)$. One can verify that  $$\lim_{n\to\infty}\rho_{\beta_n}(X) = \lim_{n\to\infty}\ell_{\beta_n}^{-1}(p \ell_{\beta_n}(d)) = \lim_{n\to\infty}\ell_{\beta_n}^{-1}(\infty) =c < d=\esssup X,$$ yielding a contradiction to the assumption that $\rho_{\alpha_n}\to\esssup$ and $\{\beta_n,n\in\N\}\subseteq\{\alpha_n,n\in\N\}$. Therefore, we have (ii) holds.

 We thus complete the proof.
\qed

The following lemma can be found in Theorem 4.2 of \cite{BM06} (see also \cite{D12}) which will be used in the proof of  Proposition \ref{pr:1-sec1} and Theorem \ref{th:main-2}.
\begin{lemma}
\label{lem:a1}
 Any  law-invariant convex (and thus coherent) risk measure that satisfies lower-semicontinuity in $L^1$ and $\rho(0)=0$ must be consistent with increasing convex order, that is, if  $X\preceq_{\rm icx} Y$\footnote{For two random variables $X$ and $Y$, $X$ is said to be smaller than $Y$ with respect to increasing convex order, denoted by $X\preceq_{\rm icx} Y$, if $\E [u(X)]\le \E u(Y)$ for any increasing convex function $u$. It is easy to see that $X\succeq_{\rm icx} Y$ if and only if $-Y \succeq_{\rm SSD} -X$, i.e., $-Y$ is smaller than $-X$ in second-order stochastic dominance.}, then $\rho(X)\le \rho(Y)$. In particular, $ \E [X] \le \rho(X)$.
\end{lemma}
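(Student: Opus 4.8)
I would derive the claim from the quantile (spectral) dual representation of $\rho$, avoiding any coupling or Strassen-type argument. Since $\rho$ is law-invariant we may first pass to an atomless probability space, enlarging the original one if necessary; this is harmless. The key input I would invoke is that a law-invariant convex risk measure on $L^1$ that is lower-semicontinuous and normalised by $\rho(0)=0$ admits a representation of the form
\begin{equation*}
\rho(X)=\sup_{g\in\mathcal G}\left(\int_0^1 \VaR_u(X)\,\mathrm d g(u)-\beta(g)\right),
\end{equation*}
where $\mathcal G$ is a set of convex distortion functions $g$ (so each left-derivative $g'$ is nonnegative and nondecreasing) and $\beta\ge 0$ is a penalty with $\inf_{g\in\mathcal G}\beta(g)=0$; in the coherent case $\beta$ takes only the values $0$ and $\infty$. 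This is the classical Kusuoka-type theorem -- the same device written as \eqref{eq:convex-kusuoka} in Proposition \ref{prop:3insec2} -- which I would simply quote, e.g.\ from \cite{K01}, \cite{JST06}, or from \cite{BM06} itself.

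Granting this representation, the argument is short. I would first record the standard fact that, for integrable $X$, increasing convex (stop-loss) order is captured in the quantile domain: from ${\rm CVaR}_\alpha(X)=\inf_t\{t+\tfrac1{1-\alpha}\E[(X-t)_+]\}$ one sees that $X\preceq_{\rm icx}Y$ is equivalent to $\int_\alpha^1 \VaR_u(X)\,\mathrm d u\le\int_\alpha^1 \VaR_u(Y)\,\mathrm d u$ for every $\alpha\in[0,1]$. Since every nonnegative nondecreasing weight on $[0,1]$ is a nonnegative mixture of the indicators $\mathbf 1_{[\alpha,1]}$, this is in turn equivalent to $\int_0^1 \VaR_u(X)\,w(u)\,\mathrm d u\le\int_0^1 \VaR_u(Y)\,w(u)\,\mathrm d u$ for all nonnegative nondecreasing $w\in L^1[0,1]$, and in particular for $w=g'$ with $g\in\mathcal G$. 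Hence, when $X\preceq_{\rm icx}Y$, every term $\int_0^1\VaR_u(X)\,\mathrm d g(u)-\beta(g)$ in the supremum is dominated by the corresponding term for $Y$, and taking the supremum over $g$ gives $\rho(X)\le\rho(Y)$. For the final assertion, $\E[X]$ is a constant with $\E[X]\preceq_{\rm icx}X$ by Jensen's inequality, so $\rho(\E[X])\le\rho(X)$, while $\rho(\E[X])=\E[X]$ by translation invariance and $\rho(0)=0$; thus $\E[X]\le\rho(X)$.

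The step I expect to be the real obstacle is the representation itself: its content is that law-invariance together with lower-semicontinuity forces the dual densities to be generated precisely by the increasing spectra $g'$, and it is exactly this match -- increasing weights on one side, convexity of the distortion on the other -- that makes the monotonicity in $\preceq_{\rm icx}$ fall out; everything after it is elementary. If one preferred a route not using this theorem, I would instead decompose $X\preceq_{\rm icx}Y$ via Strassen's theorem as $X\preceq_{\rm st}Z\preceq_{\rm cx}Y$, dispatch the $\preceq_{\rm st}$ part by coupling and monotonicity of $\rho$, and dispatch the $\preceq_{\rm cx}$ part by a conditional Jensen inequality $\rho(\E[W\mid\mathcal G])\le\rho(W)$; but proving that inequality (for instance via the fact that $\E[W\mid\mathcal G]$ lies in the $L^1$-closed convex hull of the rearrangements of $W$, then using convexity and lower-semicontinuity of $\rho$) is itself the crux, so the difficulty is merely relocated.
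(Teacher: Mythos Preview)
The paper does not give a proof of this lemma at all; it simply records it as a known fact, citing Theorem~4.2 of \cite{BM06} and \cite{D12}. Your argument via the Kusuoka/spectral representation is correct: once one has $\rho(X)=\sup_{g\in\mathcal G}\left(\int_0^1\VaR_u(X)\,\mathrm dg(u)-\beta(g)\right)$ with convex distortions $g$, the equivalence of $\preceq_{\rm icx}$ with the ordering of all upper-tail integrals $\int_\alpha^1\VaR_u(\cdot)\,\mathrm du$ (together with $\E[X]\le\E[Y]$, which $\preceq_{\rm icx}$ also gives and which handles the constant part $g'(0)$ of the weight) yields the termwise inequality, and the ``in particular'' part follows exactly as you wrote.

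For comparison, the original proof in \cite{BM06} takes essentially the second route you sketch: it decomposes $\preceq_{\rm icx}$ into a stochastic-dominance step and a convex-order step, handling the latter by showing the conditional-Jensen inequality $\rho(\E[Y\mid\mathcal G])\le\rho(Y)$ for law-invariant convex $\rho$. Your representation-based route is cleaner once the Kusuoka theorem is granted, but---as you rightly flag---all the real work is hidden in that theorem, whereas the coupling route keeps the argument at the level of convexity and law-invariance without invoking the full dual structure. Either way there is no gap in your proposal.
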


\noindent{\bf Proof of Proposition \ref{pr:1-sec1}.}
(i) To show the ``if'' part, note that when $\mathbb P_1=\mathbb P_2=\mathbb P$,  the joint distribution of $(\xi,\xi)$ lies in  the set $ \Pi(\mathbb{P}_1,\mathbb{P}_2)$ where $\xi\sim \mathbb P$, and $\rho(\|\xi-\xi\|)=0$. Hence,
$d_{\rho}(\mathbb{P}_1, \mathbb{P}_2)=0$.
To show the ``only if'' part,  suppose now that $d_{\rho}(\mathbb{P}_{1}, \mathbb{P}_{2})= 0$. By Lemma \ref{lem:a1}, we have  $\rho\ge \E$, and thus, $d_{\rm W}(\mathbb{P}_{1}, \mathbb{P}_{2})= 0$. By that the Wasserstein metric satisfies identity of indiscernibles, we have     $\mathbb{P}_{1}= \mathbb{P}_{2}$.

(ii) The symmetry follows directly from  the definition.

(iii) 
Note that by the definition of $d_{\rho}$, for any $\epsilon>0$, there exist $\Pi_1\in \Pi(\mathbb P_1,\mathbb P_2)$ and $\Pi_2\in \Pi(\mathbb P_2,\mathbb P_3)$ such that
 $$
 d_{\rho}\left(\mathbb{P}_{1}, \mathbb{P}_{2} \right) \ge \rho^{\Pi_1}(\|\xi_1-\xi_2\|) -\epsilon
~~~{\rm  and }~~~
 d_{\rho}\left(\mathbb{P}_{2}, \mathbb{P}_{3} \right) \ge \rho^{\Pi_2}(\|\xi_1-\xi_2\|) -\epsilon .
 $$
By Theorem 6.10 of \cite{K97}, there exist $\xi_1^*,\xi_2^*,\xi_3^*$ such that $(\xi_1^*,\xi_2^*)$ has the joint distribution $\Pi_1$ and $(\xi_2^*,\xi_3^*)$ has the joint distribution $\Pi_2$.
It follows that
\begin{align*}
 d_{\rho}\left(\mathbb{P}_{1}, \mathbb{P}_{2} \right) + d_{\rho}\left(\mathbb{P}_{2}, \mathbb{P}_{3} \right)
& \ge  \rho (\|\xi_1^*-\xi_2^*\|) +\rho(\|\xi_2^*-\xi_3^*\|)  -2\epsilon\\
 &  \ge \rho\left(\|\xi_1^*-\xi_3^*\|\right)  -2\epsilon\ge d_{\rho}\left(\mathbb{P}_{1}, \mathbb{P}_{3} \right)-2\epsilon,
\end{align*}
where the second inequality follows from the subadditivity of $\rho$
%
and the subadditivity of $\|\cdot\|$,
and the last inequality follows from the definition of $d_{\rho}$ and ${\xi}_i^*\sim \mathbb P_i$, $i=1,3$.
  By the arbitrariness of $\epsilon$, we have  $d_{\rho}\left(\mathbb{P}_{1}, \mathbb{P}_{2} \right) + d_{\rho}\left(\mathbb{P}_{2}, \mathbb{P}_{3} \right)
\ge  d_{\rho}\left(\mathbb{P}_{1},  \mathbb{P}_{3} \right).$

(iv) The non-negativity follows from the nonnegativity of the norm $\|\cdot\|$ and $\rho(X)\ge0$ for $X\ge 0$.
We thus complete the proof.
\qed

\noindent{\bf Proof of Proposition \ref{prop:3insec2}.}
To show the ``if'' part,   for any $\alpha\in A$,  with the representation \eqref{eq:convex-kusuoka}, we have for any nonnegative random variable $X$
 \begin{align} \label{eq-0604-1}
\rho_\alpha(X)
& = \sup _{g \in {\mathcal H_{\rho_\alpha} }}
 \left\{\int_{0}^{1} \operatorname{VaR}_{\alpha}(X) \mathrm{d} g(\alpha) \right\}
   \le \sup _{g \in {\mathcal H_{\rho_\alpha} } }  \|g'\|_\infty \int_{0}^{1}  \operatorname{VaR}_{\alpha}(X) \mathrm{d} \alpha = {c_\alpha} \E  [X] ,
\end{align}
where   the inequality follows from  the H\"older inequality. This implies the robustness holds.
By $c_{\alpha_n}\to\infty$ as $n\to\infty$, there exist  $g_n\in {\mathcal H_{\rho_{\alpha_n}}}$, $n\in\N$,  such that $\|g_n'\|_\infty>c_{\alpha_n}-1/n$, and thus $\|g_n'\|_\infty =g_n'(1)\to\infty$ as $n\to\infty$. Note that $\|g_n'\|_\infty\in\R$ which implies $g_n$ is Lipschitz continuous. We have $g_n(\alpha)\to 1_{\{\alpha=1\}}$,
and thus,
\begin{align}\label{eq:conve1}
\rho_{\alpha_n}(X)
& \ge  \int_{0}^{1} \operatorname{VaR}_{\alpha}(X)  \mathrm{d}g_n(\alpha)  \to \int_{0}^{1} \operatorname{VaR}_{\alpha}(X)  1_{\{\alpha=1\}} =\esssup X~~{\rm as}~n\to\infty,
\end{align}
  where the convergence follows from  dominated convergence theorem as 
  $ \int_{0}^{1} \operatorname{VaR}_{\alpha}(X)  \mathrm{d}g_n(\alpha)\le \esssup X.$
 Also, note that $\rho(c)=c$ and $\rho$ is monotone,  $\rho(X)\le \esssup X$.  This together with \eqref{eq:conve1} implies $\rho_{\alpha_n}\to \esssup $ as $n\to\infty$.

 We next consider the ``only if'' part. First note the {\bf robustness} property implies that $\rho_\alpha$ takes finite value in $L^1$. By Corollary 2.6  and Theorem 2.9 of \cite{R13}, we have $\rho_\alpha$ must be $L^1$ continuous and thus can be represented by \eqref{eq:convex-kusuoka} with   $\mathcal H_{\rho_\alpha} $ being a set of Lipschitz continuous convex distortion functions with $c_\alpha =\sup_{g\in \mathcal H_{\rho_\alpha}}\|g'\|_\infty<\infty$.
Further, by   the property of {\bf data-drivenness}, there exist $\alpha_n\in A$, $n\in \N$, such that  $\rho_{\alpha_n} \to \esssup $ as $n\to\infty$. By \eqref{eq:convex-kusuoka}, for a random variable $X$, there exist $g_n \in \mathcal H_{\infty}^{c_{\alpha_n}}, n\in\N$,  such that
$$
 \int_{0}^{1} \operatorname{VaR}_{\alpha}(X) \mathrm{d} g_n(\alpha)   \to \esssup X~~{\rm as}~n\to\infty.
$$
This holds only if $g_n (\alpha)\to 1_{\{\alpha=1\}}$ as $n\to\infty$. Hence, we have $c_{\alpha_n}\ge  \|g'_n \|_\infty\to\infty$ as $n\to\infty$. This completes the proof.
\qed

\noindent{\bf Proof of Proposition \ref{pr-3finitesample}.}  By assumption on $\rho_\alpha$, it satisfies  Proposition \ref{prop:3insec2} with $c_\alpha=c$, and thus,  \eqref{eq-0604-1} holds  for any nonnegative random variable $X$.
That is,  $ \rho_\alpha (X)\le c\E[X]$ for all nonnegative  $X$.
  Therefore,  $ d_{\rho_\alpha} \le c d_{\mathrm{W}}$. It follows that $ d_{\rm W} \left(\mathbb{P},\widehat{\mathbb{P}}_{N}\right)\le \epsilon/c$ implies $d_{\rho_\alpha} \left(\mathbb{P},\widehat{\mathbb{P}}_{N}\right)\le \epsilon$, and thus, %
\begin{equation} \label{eq:setinclu-1}
 \mathbb{B}^{\rm W}_{\varepsilon/c}\left(\widehat{\mathbb{P}}_{N}\right) \subseteq \mathbb{B}^{\rho_\alpha}_{\varepsilon}\left(\widehat{\mathbb{P}}_{N}\right). 
\end{equation}
Recall that  Theorem 3.2 of \cite{EK18} gives
\begin{equation} \label{eq:fsg-N}
 \p\left( \mathbb{B}_{\varepsilon_{N}^{\rm EK}(\eta)}^{\rm W}(\widehat{\mathbb{P}}_{N})\right) \ge 1-\eta,
~~~{\rm where}~~  
\varepsilon_{N}^{\rm EK}(\eta)=
 \begin{cases}
\varepsilon_{0}^{1 /  m_2} & \text { if } 1 \geq \varepsilon_{0}, \\
\varepsilon_{0}^{1 / a} & \text { if } 1<\varepsilon_{0},\end{cases}
\end{equation}
for some  constants $c_1,c_2$ only depending on $a$, $A$ and $m$. Obviously, we can write $\varepsilon_N(\eta)=c \varepsilon_{N}^{\rm EK}(\eta)$, and thus,
  the set inclusion \eqref{eq:setinclu-1} and \eqref{eq:fsg-N} imply 
\begin{equation} \label{eq:fsg-N}
 \p\left( \mathbb{B}^{\rho_\alpha}_{\varepsilon_{N} (\eta)} (\widehat{\mathbb{P}}_{N})\right)\ge \p\left( \mathbb{B}_{\varepsilon_{N}^{\rm EK}(\eta)}^{\rm W}(\widehat{\mathbb{P}}_{N})\right) \ge 1-\eta.
\end{equation}
  Therefore, the finite sample guarantee \eqref{eq-fsg} holds for $\varepsilon_N(\eta)$, which completes the proof. \qed

 \noindent{\bf Proof of Proposition \ref{prop:3}.} Let $c_1,c_2$ be constants in \eqref{eq:fsg-N}.
 For $N\in\N$, define $$
 \eta_N^{(1)} =
 \begin{cases}
 c_1 e^{ -c_2 N\varepsilon_N^{m_2}}, & \varepsilon_N \le 1,\\
 c_1 e^{ -c_2 N\varepsilon_N^{a}}, & \varepsilon_N >1,
 \end{cases}
 ~~ ~~{\rm  and }~~~
 \eta_N^{(2)}=
 \begin{cases}
 c_1 e^{ -c_2 N(\varepsilon_N/c)^{m_2}}, & \varepsilon_N \le c,\\
 c_1 e^{ -c_2 N(\varepsilon_N/c)^{a}}, & \varepsilon_N >c.
 \end{cases}
 $$
 One can verify that   $\varepsilon_N \le 1\le c$ for $N$ large enough, and thus,
   $$
   \eta_N^{(i)}=c_1e^{-c_2k_N} , ~i=1,2,~~~~ \varepsilon_{N}^{\rm EK}(\eta_N^{(1)})=\varepsilon_{N},~~~~\varepsilon_{N}^{\rm EK}(\eta_N^{(2)})=\frac{\varepsilon_{N}}{c},
 $$  where $\varepsilon_{N}^{\rm EK}(\eta_N^{(i)})$ is defined by \eqref{eq:fsg-N}.  By assumptions on $k_N$ and $\varepsilon_{N}$, we have
  $$
  \sum_{N=1}^\infty\eta_N^{(i)}<\infty,~i=1,2,~~~\varepsilon_{N}^{\rm EK}(\eta_N^{(1)}) \to 0, ~~~\varepsilon_{N}^{\rm EK}(\eta_N^{(2)}) \to 0 ~~{\rm as} ~~N\to\infty.
  $$
  That is,  both $\eta_N^{(i)}$, $i=1,2$ satisfy the condition of  Theorem 3.6 of \cite{EK18}. Denote by
\begin{equation*}
\widehat{J}_{N}^{(1)}: = \inf _{x \in \mathbb{X}} \sup _{\mathbb{P} \in  \mathbb{B}^{\rm W}_{\varepsilon_N} \left(\widehat{\mathbb{P}}_{N}\right) } \mathbb{E}^{\mathbb{P}}[h(x, \xi)]~~~{\rm and}~~~\widehat{J}_{N}^{(2)} := \inf _{x \in \mathbb{X}} \sup _{ \mathbb{P} \in  \mathbb{B}^{\rm W}_{\varepsilon_N/c} \left(\widehat{\mathbb{P}}_{N}\right) } \mathbb{E}^{\mathbb{P}}[h(x, \xi)].
\end{equation*}
We have  $\mathbb{P}^{\infty}$-almost surely $\widehat{J}_{N}^{(i)} \downarrow J^{\star}$ as $N \rightarrow \infty$. By $d_{\mathrm{W}} \le d_{\rho_\alpha} \le  c d_{\mathrm{W}}$, which implies
\begin{equation} \label{eq:setinclu}
 \mathbb{B}^{\mathrm{W}}_{\varepsilon_N/c }\left(\widehat{\mathbb{P}}_{N}\right) \subseteq \mathbb{B}^{\rho_\alpha}_{\varepsilon_N}\left(\widehat{\mathbb{P}}_{N}\right)\subseteq \mathbb{B}^{\mathrm{W}}_{ \varepsilon_N}\left(\widehat{\mathbb{P}}_{N}\right)
\end{equation}
and thus,
\begin{equation*}
\widehat{J}_{N}^{(2)} \le \widehat{J}_{N} \le  \widehat{J}_{N}^{(1)},
\end{equation*}
Therefore,  we have $\mathbb{P}^{\infty}$-almost surely $\widehat{J}_{N} \downarrow J^{\star}$ as $N \rightarrow \infty$, that is, (i) holds.
The assertion (ii)  can be shown by similar arguments in the proof of  (ii) of Theorem 3.6 of \cite{EK18}.  \qed

\subsection{Proofs of Section \ref{sec:4-concave}}

\noindent{\bf Proof of  Theorem \ref{thm1}.} 
Note that the distribution $\Pi $ with $\Pi((\widehat{\xi},\xi)=(\widehat{\xi}_i,y_i)) =1/N,~i=1,\ldots,N$ satisfies $\Pi\in \Pi( \widehat{\mathbb{P}}_{N}, \mathbb{P})$. We obviously have that the problem \eqref{eq:main} is an upper bound of the   problem \eqref{eq:main-eq1-7}.   To show their equivalence, it suffices to show that the   problem \eqref{eq:main-eq1-7} is also an upper bound of the problem \eqref{eq:main}. To see this,  for any $\Pi\in\mathcal M(\Xi^2)$  satisfying $\Pi\in\Pi(\widehat{\mathbb P}_N,\mathbb P)$ and $ \rho^{\Pi}(\|\widehat{\xi}-\xi\|)\leq \epsilon$, define a new joint distribution $\Pi^*$ as
$$
\Pi^*:= \frac1N\sum_{i=1}^N \delta_{(\widehat{\xi}_i, y_i)} ~~~{\rm with }~~~y_i 
=\E^{\Pi}[\xi |\widehat{\xi}=\widehat{\xi}_i],~~i=1,\ldots,N,
$$
$\delta_{x}$ is the  degenerated distribution at  point $x$. 
 Let $(\widehat{\xi}^*, {\xi}^*)$ be a random vector having the distribution $\Pi^*$.
 Denote by $\mathbb P^*$  the marginal distribution of   $\xi^*$.   We   have the following facts.
\begin{itemize}
\item [(i)] We have $\widehat{\xi}^*\sim \widehat{\mathbb P}_N$, that is, $\Pi^*\in\Pi(\widehat{\mathbb P}_N,\mathbb P^*)$. By the convexity of $\Xi$, we have $y_i\in \Xi$, $i=1,\ldots,N$, and thus,
    $\Pi^*\in\mathcal M(\Xi^2)$.
\item [(ii)] Denote by $c(x,y)=\|x-y\|$ which is a convex function on $\R^m\times \R^m$. We have for any increasing convex function $u$, it holds that  $v:=u\circ c$ is a convex function as for any $z_1,z_2$, $\lambda\in [0,1]$,
   it holds that
   $$
     u(c(\lambda z_1+ (1-\lambda)z_2)) \le  u (\lambda c(z_1)+ (1-\lambda)c(z_2) )\le \lambda u ( c(z_1))+ (1-\lambda) u(c(z_2) ).
   $$
    Hence, we have
\begin{align*}
 \E^{\Pi^*}[u( \|\widehat{\xi}^*-\xi^*\|)]
 & =  \E^{\Pi^*}[v(\widehat{\xi}^*,\xi^* )]\\
 &  = \frac1N\sum_{i=1}^N v(\widehat{\xi}_i, y_i )]\\
  & = \frac1N \sum_{i=1}^N v(\E^{\Pi}[(\widehat{\xi},\xi) |\widehat{\xi}=\widehat{\xi}_i])  \\
 & \le \frac1N \sum_{i=1}^N \E^{\Pi} [v(\widehat{\xi}, \xi )|\widehat{\xi}=\widehat{\xi}_i]   \\
 & =  \E^{\Pi}[v(\widehat{\xi},\xi )]=\E^{\Pi}[u( \|\widehat{\xi}-\xi\|)] ,
\end{align*}
where the inequality follows from the Jensen inequality. Noting the above inequality holds for any increasing function $u$, which implies that
$$
 \|\widehat{\xi}^*-\xi ^*\| \le_{\rm icx}\|\widehat{\xi}-\xi \| .
$$
By Lemma \ref{lem:a1}, this implies $\rho^{\Pi^*}(\|\widehat{\xi}^*-\xi^*\|) \le \rho^{\Pi}(\|\widehat{\xi} -\xi \|)$, and thus, $\rho^{\Pi^*}(\|\widehat{\xi}^*-\xi^*\|)\le \epsilon$.
\item [(iii)]Denote by $\mathbb P_i$  the conditional distribution of $\xi$ given $\widehat{\xi}=\widehat{\xi}_i$ when the joint distribution of $(\widehat{\xi},\xi)$ is  ${\Pi}$.
 Noting that  $\ell$ is concave, we have  $\E^{\mathbb P_i}[\ell(\xi)] \le \ell(\widehat{\xi}_i)$, $i=1,\ldots,N$. It follows that
$$
 \E^{\Pi}[\ell(\xi)]=\frac1N\sum_{i=1}^N \E^{\mathbb P_i}[\ell(\xi)] \le  \frac1N\sum_{i=1}^N \ell(\widehat{\xi}_i)= \E^{\Pi^*}[\ell(\xi^*)].
$$
\end{itemize}
Combining the above three facts, we have that the   problem \eqref{eq:main-eq1-7} is also an upper bound of the problem \eqref{eq:main}, which completes the proof.
\qed

~~

\noindent{\bf Proof of Corollary \ref{dual_form}.}
We begin by noting that given any random variable $Y$ such that $\p(Y=y_i)=1/N$, $i=1,\ldots,N$, any law-invariant convex risk measure $\rho$ can also be written  as
\begin{equation} \label{eq-convexrm}
\rho(Y) = \sup_{z\in A_\rho} \frac1N\sum_{i=1}^N y_iz_i .
\end{equation}
 {\bf (Proving  \eqref{eq-convexrm})}.  First note that $\rho$ has the dual representation (\cite{FS16})
\begin{align} \label{eq-convexrm-1}
 \rho(Y) &  =  \sup_{Z\in \mathcal Z_\rho} \E[YZ]   ,
\end{align}
where $ \mathcal Z_\rho =\{ Z\ge 0: \E[Z]=1, \E[ZX]\le \rho(X)~{\rm for ~all}~X\}$.
It follows that
\begin{align*}
 \rho(Y)
 &  = \sup_{Z\in \mathcal Z_\rho} \left\{\frac1N\sum_{i=1}^N y_i\E[Z|Y=y_i]  \right\}  = \sup_{Z\in \mathcal Z_\rho} \left\{\frac1N\sum_{i=1}^N y_i z_i  \right\} = \sup_{z\in A_\rho^*}  \left\{\frac1N\sum_{i=1}^N y_iz_i  \right\},
\end{align*}
where  $z_i =\E^{\p}[Z|Y=y_i]$, $i=1,\ldots,N$,  and  $$A_\rho ^*=\{z\in \R_+^N: \E[Z|Y=y_i]=z_i, Z\in \mathcal Z_\rho \}.$$
It remains to show that $A_\rho^*=A_\rho$. 
 Obviously, $A_\rho \subseteq A_\rho^*$.
To see the other set inclusion, take any $z\in A_\rho^*$, i.e., there exists  $Z\in \mathcal Z_\rho$  such that $\E[Z|Y=y_i]=z_i$, $i=1,\ldots,N$.
Define  $\Delta_z =  \sum_{i=1}^Nz_i \id_{\{Y=y_i\}} $. Noting $\E[\Delta_z|Y=y_i] =z_i$, $i=1,\ldots,N$, it suffices to show $\Delta_z\in \mathcal Z_\rho$.
 For any $X$,  define $\Delta_{X} =\frac1N  \sum_{i=1}^N  {x_i}\id_{\{Y=y_i\}}$ with $x_i= \E[ X|Y=y_i] $, $i=1,\ldots,N$.  By Jensen inequality, we have for any convex function $u$,  $\E u(\Delta_X) \le \E u(X)$, that is, $\Delta_X\le_{\rm cx} X$. By Lemma \ref{lem:a1}, we have 
\begin{align*}
\rho(X) \ge \rho (\Delta_X) & \ge \E[\Delta_X Z] \\
  & = \frac1N\sum_{i=1}^N x_i\E [ Z |Y=y_i ]  \\
  & = \frac1N\sum_{i=1}^N x_i z_i \\
  & = \frac1N\sum_{i=1}^N z_i\E [ X |Y=y_i ]   = \E[\Delta_z X],
\end{align*}
where the second inequality follows from  $Z\in \mathcal Z_\rho$.  Hence, we have $\Delta_z\in \mathcal Z_\rho$, and thus, $z\in A_\rho$.   Therefore, $A_\rho = A_\rho^*$, and thus, \eqref{eq-convexrm} holds.

Using \eqref{eq-convexrm}, we can write the problem \eqref{eq:main-eq1-7} as
\begin{eqnarray}
\sup_{y_1,...,y_N}  && \frac{1}{N} \sum_{i=1}^N \ell(y_i), \nonumber \\
{\rm subject\;to}     &&  \sup_{z\in A_\rho} \left\{\frac1N\sum_{i=1}^N \|  \widehat{\xi}_i - y_i \| z_i \right\} \leq \epsilon, \label{eqn11}\\
                              && y_i \in \Xi, \;\;i=1,...,N, \nonumber
\end{eqnarray}
and its Lagrange dual by
\begin{align}
  & \inf_{\lambda \geq 0} \sup_{y_i \in \Xi} \frac{1}{N} \sum_{i=1}^N \ell(y_i) + \lambda \left(
  \epsilon -  \sup_{z\in A_\rho} \left\{\frac1N\sum_{i=1}^N \|  \widehat{\xi}_i - y_i \|  z_i  \right\}  \right) \nonumber \\
  = &  \inf_{\lambda \geq 0} \sup_{y_i \in \Xi} \inf_{z\in  A_\rho}  \frac{1}{N} \sum_{i=1}^N \ell(y_i) + \lambda \left( \epsilon -   \left\{\frac1N\sum_{i=1}^N \|  \widehat{\xi}_i - y_i \|  z_i  \right\}  \right) \nonumber \\
  = &  \inf_{\lambda \geq 0, z \in A_\rho} \sup_{y_i \in \Xi}  \lambda \epsilon  + \frac{1}{N} \sum_{i=1}^N \left(  \ell(y_i) - \lambda \|  \widehat{\xi}_i - y_i \|  z_i \right) \nonumber \\
  = & \inf_{\lambda \geq 0, z \in A_\rho}  \lambda \epsilon   + \frac{1}{N} \sum_{i=1}^N \sup_{y\in \Xi}  \left(  \ell(y) - \lambda \|  \widehat{\xi}_i - y \|  z_i \right), \label{lambda_eq}
\end{align}
where the second equality follows Sion's minimax theorem, given that the objective function is convex in $z$ and concave in $y_i$, and the set $A_\rho$ is a compact set. Strong duality holds for $\epsilon >0$,  because the slater condition can be satisfied.  Strong duality also holds for $\epsilon = 0$, because both the primal and the dual reduces to  $\frac{1}{N} \sum_{i=1}^N \ell(\widehat{\xi}_i)$.

Taking $p = \lambda z$, we can write the above problem equivalently as
\begin{align*}
   & \inf_{\lambda \geq 0,  \frac{p}{\lambda} \in A_\rho}  \lambda \epsilon + \frac{1}{N} \sum_{i=1}^N \sup_{y\in \Xi}  \left(  \ell(y) - \|  \widehat{\xi}_i - y \|  p_i \right) \\
= & \left\{ \begin{array}{l}
\inf_{\lambda \geq 0, \frac{p}{\lambda} \in A_\rho, s_i}\;\;\; \lambda \epsilon + \frac{1}{N} \sum_{i=1}^N  s_i \\
{\rm subject\;to}\;\;\; \sup_{y\in \Xi}  \left(  \ell(y) - \|  \widehat{\xi}_i - y \|  p_i \right) \leq s_i,\;\;i=1,...,N
\end{array}
 \right\}.
\end{align*}
In the case $\lambda =0$, $\frac{p}{0}$ is defined as an infeasible solution for any $p \neq {\bf 0}$, denoted by $\frac{p}{0} \notin  A_{\rho}$, and $\frac{\bf{0}}{0}$ is defined as a feasible solution, denoted by $\frac{\bf{0}}{0} \in A_{\rho}$. One can easily verify that this definition is consistent with the fact that the problem \eqref{lambda_eq} reduces to $\frac{1}{N} \sum_{i=1}^N \sup_{y\in \Xi} \ell(y)$ when $\lambda =0$.

Let $\chi_{\Xi}$ denote the characteristic function of $\Xi$ and $f(y):= \|  \widehat{\xi}_i - y \|  p_i $. We have from the Fenchel duality that the left-hand-side of the above constraint, i.e. $\sup_{y}  \left(  [\ell(y) - \chi_{\Xi}(y)] - f(y) \right)$, can be replaced by its dual and thus the constraint can be equivalently written as
\begin{align*}
     &\inf_{u_i}\; [-\ell + \chi_{\Xi} ]^*(u_i) + f^*(-u_i) \leq s_i, \;\; i=1,...,N\\
\Leftrightarrow & \exists u_i,\;\;   [-\ell + \chi_{\Xi} ]^*(u_i) + f^*(-u_i) \leq s_i, \;\; i=1,...,N,
\end{align*}
where
$$
[-\ell + \chi_{\Xi} ]^*(u_i) = \inf_{\nu}\;\; [-\ell]^*(u_i-\nu) + \sigma_{\Xi}(\nu)
$$
and
$$
f^*(-u_i) = \begin{cases}
 -\widehat{\xi}_i ^\top u_i, &\| u_i\| _* \leq p_i, \\
\infty, &{\rm o.w.}.
\end{cases}
$$
That is, the constraint can also be equivalently written as
$$ \exists u_i, \nu_i, ~~ [-\ell]^*(u_i-\nu_i) + \sigma_{\Xi}(\nu_i) - \widehat{\xi}_i ^\top u_i \leq s_i,
~~ \| u_i\| _* \leq p_i.
\;\; i=1,...,N.$$
Finally, the fact that $A_\rho$ must be a subset of a probability simplex, i.e. satisfying $\frac{p}{\lambda} \geq 0$ and $\vec{1}^\top \frac{p}{\lambda} = 1$ implies that $\sum_{i=1}^N p_i = \lambda$ and $p_i \geq 0$ must hold.

\qed

~~

\subsection{Proofs of Section  \ref{sec:4convex}}
\noindent{\bf Proof of  Theorem \ref{th:main-2}.}
Denote by $\Theta_j =\max\{x\in\R^m: \ell_j(x) = \ell(x)\}$, $j=1,\ldots,K$.  Without loss of generality assume that $\Theta_j$, $j=1,\ldots,K$, are disjoint. Similar as Theorem \ref{thm1}, we only show that  the problem \eqref{eq:gene-2-31} is also an upper bound of the problem \eqref{eq:main}.
For any $\Pi\in\Pi(\widehat{\mathbb P}_N,\mathbb P)$  satisfying $\Pi\in\mathcal M(\Xi^2)$ and $ \rho^{\Pi}(\|\widehat{\xi}-\xi\|)\leq \epsilon$, define a new joint distribution $\Pi^*$ as
 \begin{align}
   \label{eq:Qstar-general}
 \Pi^*= \frac1N\sum_{i=1}^N\sum_{j=1}^K p_{ij}\delta_{(\widehat{\xi}_i,\xi_{ij})}, ~~{\rm with}~~ p_{ij}=\mathbb Q_i(\Theta_j),~~  \xi_{ij}
 =\E^{\mathbb Q_i}[\xi|\Theta_j] =  \E^{\Pi}[\xi|\Theta_j, \widehat{\xi}=\widehat{\xi}_i],
 \end{align}
 where $\mathbb Q_i$ is the conditional distribution of $\xi$ given $\widehat{\xi} =\widehat{\xi}_i$ when the joint distribution of $(\widehat{\xi},\xi )$ is  ${\Pi}$.
   Let $ (\widehat{\xi}^*, {\xi}^*)\in\R^m\times\R^m$ be a random vector having the distribution $\Pi^*$.  Denote by $\mathbb Q_i^*$   the marginal distributions of  $\xi^*$ conditional on $\widehat{\xi}=\widehat{\xi}_i$, $i=1,\ldots,N$.
  \begin{itemize}
\item [(i)]  We have $\widehat{\xi}^*\sim \widehat{\mathbb P}_N$, that is, $\Pi^*\in\Pi(\widehat{\mathbb P}_N,\mathbb P^*)$. By the convexity of $\Xi$, we have $\xi_{ij}\in \Xi$, $i=1,\ldots,N$, $j=1,\ldots,K$, and thus,
    $\Pi^*\in\mathcal M(\Xi^2)$.
\item [(ii)]Similar as (ii) in the proof of Theorem \ref{thm1}, for any increasing convex function $u$, define $v:=u\circ c$ is a convex function, where $c(x,y)=\|x-y\| $.
    Hence, we have
\begin{align*}
 \E^{\Pi^*}[u( \|\widehat{\xi}^*-\xi^*\|)]
 &  =  \E^{\Pi^*}[v(\widehat{\xi}^*,\xi^* )]\\
 &  = \frac1N\sum_{i=1}^N\sum_{j=1}^K p_{ij} v(\widehat{\xi}_i,\xi_{ij}) \\
 &  =  \frac1N\sum_{i=1}^N\sum_{j=1}^K p_{ij} v(\widehat{\xi}_i,\E^{\Pi}[\xi|\Theta_j, \widehat{\xi}=\widehat{\xi}_i])  \\
 &  =  \frac1N\sum_{i=1}^N\sum_{j=1}^K p_{ij} v( \E^{\Pi}[(\widehat{\xi},\xi)|\Theta_j, \widehat{\xi}=\widehat{\xi}_i])  \\
 & \le \frac1N \sum_{i=1}^N\sum_{j=1}^K p_{ij} \E^{\Pi} [v(\widehat{\xi}, \xi )|\Theta_j, \widehat{\xi}=\widehat{\xi}_i]   \\
 &  =  \E^{\Pi}[v(\widehat{\xi},\xi )]=\E^{\Pi}[u( \|\widehat{\xi}-\xi\|)] ,
\end{align*}
where the inequality follows from the Jensen inequality.
Therefore, we have $\rho^{\Pi^*}(\|\widehat{\xi}^*-\xi^*\|)\le \epsilon$.
\item [(iii)]
 We have
 \begin{align*}
\E^{\mathbb Q_i}[\ell(\xi)]
& = \sum_{j=1}^K  \int_{\Theta_j} \ell_j(\xi) d{ \mathbb Q_i}=\sum_{j=1}^K  p_{ij} \ell_j(\xi_{ij})  = \sum_{j=1}^K  \int_{\Theta_j} \ell_j(\xi) d {\mathbb Q_i^*} \le \sum_{j=1}^K  \int_{\Theta_j} \ell(\xi) d {\mathbb Q_i^*}= \E^{\mathbb Q_i^*}[\ell(\xi)] ,
\end{align*}
where the second equality follows from the linearity of $\ell_j$, and the inequality follows from  $\ell_j\le \ell$, $j=1,\ldots,K.$
Therefore, we have
 \begin{align*}
\E^{\Pi}[\ell(\xi)]=\frac1N\sum_{i=1}^N \E^{\mathbb Q_i}[\ell(\xi)]
 \le \frac1N\sum_{i=1}^N  \E^{\mathbb Q_i^*}[\ell(\xi)]= \E^{\Pi^*}[\ell(\xi)].
\end{align*}
\end{itemize}
Combining the above three facts, we have that the   problem \eqref{eq:main} is equivalent to 
\begin{align}\label{eq:gene-31-1}
\sup_{p_{ij},x_{ij}}~~ &\frac1N\sum_{i=1}^N\sum_{j=1}^K p_{ij} \ell(\xi_{ij}) \\
{\rm subject~ to}~~& \rho^{\Pi^*}(\|\widehat{\xi}-\xi\|) \le \epsilon,\nonumber\\
& \Pi^*(\{(\widehat{\xi}_i,\xi_{ij})\})=p_{ij}\ge 0,~\xi_{ij}\in\Xi, ~\forall i,j,\nonumber\\
& \sum_{j=1}^K p_{ij} =1,~~i=1,\ldots,N.\nonumber
\end{align}
Note that we have shown that the original optimization problem \eqref{eq:main} is bounded by the problem \eqref{eq:gene-2-31} which is again bounded by the problem \eqref{eq:gene-31-1} as $\ell_i\le \ell$ for $i=1,\ldots,K$. Therefore, by the equivalence between the problems \eqref{eq:main} and \eqref{eq:gene-31-1}, we have the optimization problem \eqref{eq:main} is equivalent to the problem \eqref{eq:gene-2-31}. This completes the proof.
\qed

~~

 \noindent{\bf Proof of Theorem \ref{main_cvx}.}
We first consider the case that the objective function $\ell$ is a piecewise linear function, that is,   $\ell=\max_{k=1,\ldots,K} \ell_k$, where  $\ell_k(x)=a_k^\top x+b_k$, $k=1,\ldots,K$. Without loss of generality, assume   $$\|a_{1}\|_*\le \cdots\le \|a_{K}\|_* ~~{\rm and ~denote ~by} ~~z_{ij}:=\ell_j(\widehat{\xi}_i)=a_j^\top \widehat{\xi}_i+b_j,~\forall~i,j.$$  By Theorem \ref{th:main-2}, we have the optimization problem  \eqref{wcvar} is equivalent to 
\begin{align}\label{eq:gene-2}
\sup_{t,\;p_{ij}\in\R,\;x_{ij}\in\R^m }~~ &\frac1N\sum_{i=1}^N\sum_{j=1}^K p_{ij} (a_j^\top (x_{ij} +\widehat{\xi}_i)+b_j) \\
{\rm subject~ to}~~& t+\frac1{1-\alpha}\frac1N\sum_{i=1}^N\sum_{j=1}^K p_{ij}(\|x_{ij}\| -t)_+ \le \epsilon,\nonumber\\
& \sum_{j=1}^Kp_{ij} =1,~~i=1,\ldots,N,~~p_{ij}\ge 0, ~\forall ~i,j.\nonumber
\end{align}
Note that for any $x_{ij}\in \R^m $, by taking $\widehat{x}_{j}  = \arg\max_{x\in\R^m :\|x\| =1}  a_j^\top x $ and $x_{ij}^* = \| x_{ij}\| \widehat{x}_{j}$, one can verify that $\|x_{ij}^*\| =\|x_{ij}\| $ and
$$
a_j^\top x_{ij}^*  =\| x_{ij}\| a_j^\top  \widehat{x}_{j} =\| x_{ij}\| \|a_j\|_*\ge  a_j^\top  x_{ij}.
$$
Therefore, it suffices to consider the $x_{ij}$ that takes the form of $x_{ij}=y_{ij} \widehat{x}_j$, $y_{ij}\in\R_+$. By taking $x_{ij}=y_{ij} \widehat{x}_j$,  we have the   problem  \eqref{eq:gene-2} is equivalent to
 \begin{align}
\sup_{ t,p_{ij},y_{ij}\in\R_+}~~ &
\frac1N\sum_{i=1}^N\sum_{j=1}^Ky_{ij}\|a_j\|_* +\frac1N\sum_{i=1}^N\sum_{j=1}^K p_{ij} z_{ij} \label{eq-220311-1}\\
{\rm subject~ to}~~& t+\frac1{1-\alpha}\frac1N\sum_{i=1}^N\sum_{j=1}^K(y_{ij}-t p_{ij})_+ \le\epsilon,\label{eq-220311-2}\\
& \sum_{j=1}^Kp_{ij} =1,~~i=1,\ldots,N.\label{eq-220311-3}
\end{align}
For any $p_{ij}\ge 0$ and $y_{ij}\ge0$ satisfying  conditions \eqref{eq-220311-2} and \eqref{eq-220311-3},
if  for some $i$, there exists $j\neq K$ such that $y_{ij} > t p_{ij}$, then by taking $y_{ij}^*:= t p_{ij} $ and $y_{iK}^*:=y_{iK}+y_{ij}-t p_{ij}> y_{iK}$, one can verify that
$$t+\frac1{1-\alpha}\frac1N\sum_{i=1}^N\sum_{j=1}^K (y_{ij}^* -t p_{ij})_+ \le \epsilon$$
and the objective function becomes larger with $y_{ij}^*$. Therefore, it suffices to consider the case that  
$$y_{ij} \le t p_{ij},~~j\neq K ,~~~ t+\frac1{1-\alpha}\frac1N\sum_{i=1}^N (y_{iK } -t p_{iK })_+ \le \epsilon.$$
In this case, we have the problem \eqref{eq-220311-1} is equivalent to
 \begin{align*}
\sup_{t,p_{ij},y_{ij}\in\R_+}~~ &
\frac1N\sum_{i=1}^N\sum_{j=1}^K y_{ij}\|a_j\|_* +\frac1N\sum_{i=1}^N\sum_{j=1}^K p_{ij} z_{ij}~~{\rm with}~~z_{ij}=(a_j^\top \widehat{\xi}_i+b_j)  \\
{\rm subject~ to}~~&y_{ij} \le tp_{ij},~~j< K ,~~~ t+\frac1{1-\alpha}\frac1N\sum_{i=1}^N (y_{iK } -t p_{iK })_+ \le \epsilon  ~\text{and }\eqref{eq-220311-3},
\end{align*}
that is,
 \begin{align*}
\sup_{t,p_{ij},y_{ij}\in\R_+}~~ &
\frac1N\sum_{i=1}^N \left( \sum_{j=1}^{ K-1 }t p_{ij}\|a_j\|_* + y_{iK }\|a_{K }\|_*\right) +\frac1N\sum_{i=1}^N\sum_{j=1}^K p_{ij} z_{ij}  \\
{\rm subject~ to}~~& t +\frac1{1-\alpha}\frac1N\sum_{i=1}^N (y_{iK } -t p_{iK })_+ \le \epsilon   ~\text{and }\eqref{eq-220311-3}.
\end{align*}
Take $y_{iK }= x_{i}p_{iK }$, $i=1,\ldots,N$. We can rewrite it as
 \begin{align*}
\sup_{t,p_{ij},x_i\in\R_+}~~ &
\frac1N\sum_{i=1}^N \left( \sum_{j=1}^{ K-1 }t p_{ij}\|a_j\|_*+ x_i p_{iK }\|a_{K }\|_*\right) +\frac1N\sum_{i=1}^N\sum_{j=1}^K p_{ij} z_{ij} \\
{\rm subject~ to}~~& t+\frac1{1-\alpha}\frac1N\sum_{i=1}^N p_{iK }(x_{i} -t)_+ \le \epsilon    ~\text{and }\eqref{eq-220311-3}.
\end{align*}
For any feasible solution $x_i,p_{ij},t$, $i=1,\ldots,N$, $j=1,\ldots,K$, define $x_{i}^*:=\sum_{i=1}^N p_{iK } x_{i}/(\sum_{i=1}^N p_{iK })=:\overline{x}$, $i=1,\ldots,N$. By Jensen inequality, we have
$t +\frac1{1-\alpha}\frac1N\sum_{i=1}^N p_{iK }(\overline{x}-t)_+ \le\epsilon$ and
the objective function remains unchanged. Therefore,
the above optimization problem  is equivalent to
 \begin{align*}
\sup_{t,p_{ij},\overline{x}\in\R_+}~~ &
t \frac1N\sum_{i=1}^N    \sum_{j\neq K }p_{ij}\|a_j\|_*+\overline{x}  \|a_{K }\|_* \frac1N\sum_{i=1}^Np_{iK } +\frac1N\sum_{i=1}^N\sum_{j=1}^K p_{ij} z_{ij} \\
{\rm subject~ to}~~& t+\frac1{1-\alpha}\frac1N\sum_{i=1}^N p_{iK }(\overline{x} -t)_+ \le \epsilon    ~\text{and }\eqref{eq-220311-3}.
\end{align*}
Noting that the objective function is increasing in $\overline{x}$, we have $\overline{x}\ge x$ holds automatically.
 Taking $\delta=\overline{x}-t$, e
 the above optimization problem  is equivalent to
 \begin{align*}
\sup_{t,p_{ij},\overline{x}\in\R_+}~~ &
t \frac1N\sum_{i=1}^N  \sum_{j=1}^Kp_{ij}\|a_j\|_*  +
\delta \|a_{K }\|_*\frac1N \sum_{i=1}^Np_{iK } +\frac1N\sum_{i=1}^N\sum_{j=1}^K p_{ij} z_{ij} \\
{\rm subject~ to}~~& t +\frac1{1-\alpha}\frac1N\sum_{i=1}^N p_{iK }\delta \le \epsilon    ~\text{and }\eqref{eq-220311-3}.
\end{align*}
Denote by $u= \frac1{1-\alpha}\frac1N\sum_{i=1}^N p_{iK }\delta$. We have the problem \eqref{eq:gene-2} is equivalent to
\begin{align*}
\sup_{t,p_{ij},u\in\R_+}~~ &
t \frac1N\sum_{i=1}^N  \sum_{j=1}^Kp_{ij}\|a_j\|_*  +
 \|a_{K }\|_*(1-\alpha) u+\frac1N\sum_{i=1}^N\sum_{j=1}^K p_{ij} z_{ij} \\
{\rm subject~ to}~~& t+u \le \epsilon    ~\text{and }\eqref{eq-220311-3}.
\end{align*}
Since the objective function is increasing in both $t$ and $u$,  the constraint $ t+u \le \epsilon$  can be replaced by $t+u=\epsilon$ without loss of generality. 
 Letting $u=\epsilon -t$, we have it is equivalent to
  \begin{align*}
\sup_{t,p_{ij}}~~ &
 \frac1N\sum_{i=1}^N  \sum_{j=1}^Kp_{ij}(\|a_j\|_* t +z_{ij} ) - t \|a_{K }\|_*(1-\alpha) + \|a_{K }\|_*(1-\alpha)\epsilon~~{\rm subject~ to}~~  \eqref{eq-220311-3}\\
& =\sup_{t\in [0,\epsilon]}\frac1N\sum_{i=1}^N \max_{j}(\|a_j\|_*t +z_{ij} ) - t \|a_{K }\|_*(1-\alpha) + \|a_{K }\|_*(1-\alpha)\epsilon.\end{align*}
Note that the objective function is convex in $t$. We therefore have that the optimal $t$ is either $0$ or $\epsilon$, and the optimal value of the problem  \eqref{eq:gene-2}  is
\begin{align*}
 \lefteqn{\max\left\{\frac1N\sum_{i=1}^N \max_{j}\{\|a_j\|_*\epsilon +z_{ij} \},~\frac1N\sum_{i=1}^N \max_{j} z_{ij}  + \|a_{K }\|_*(1-\alpha)\epsilon  \right\}}\\
 & =\max\left\{\frac1N\sum_{i=1}^N  \max_{e: \|e\|=1} \ell(\widehat{\xi}_i+\epsilon e),~\frac1N\sum_{i=1}^N \ell(\widehat{\xi}_i) + \|a_{K }\|_*(1-\alpha)\epsilon  \right\}.
\end{align*}
  Moreover, one can verify the following statements about the worst-case distribution.
\begin{itemize}
\item [(1)] If $\frac1N\sum_{i=1}^N  \max_{e: \|e\| =1} \ell(\widehat{\xi}_i+\epsilon e)\ge \frac1N\sum_{i=1}^N \ell(\widehat{\xi}_i) + L (1-\alpha)\epsilon$, then the optimal solution to the problem  \eqref{eq:gene-2}  is  $t^*=\epsilon$,
$$
 x_{ij}^* = \widehat{\xi}_i + \epsilon e^*_i ~~{\rm with}~~e^*_i=\arg \max_{e: \|e\| =1} \ell(\widehat{\xi}_i+\epsilon e),~~~p_{ij_i}^*=1 ~~{\rm for}~j_i ~{\rm such ~that}~ e^*_i = \frac{a_{j_i}}{\|a_{j_i}\| }.
$$
\item [(2)] If $\frac1N\sum_{i=1}^N \ell(\widehat{\xi}_i) + \|a_{K }\|_*(1-\alpha)\epsilon > \frac1N\sum_{i=1}^N  \max_{e: \|e\| =1} \ell(\widehat{\xi}_i+\epsilon e)$, then the optimal value can be approached by the following feasible solutions:  $t^*=0$,
    $$
     x_{ij}=\widehat{\xi}_i~~{\rm for}~j\neq K ,~~x_{iK }  = \widehat{\xi}_i  + (1-\alpha) \frac{ a_{K }}{\|a_{K }\| }\frac{ \epsilon}{ p_{iK }},~~p_{iK } \downarrow 0.
    $$
\end{itemize}

Now, consider the general convex function $\ell$.
 By assumption on $\ell$, there exist  $\ell_n,\ell_n^*$, $n\in\N$ such that $\ell_n\le \ell\le \ell_n^*$, $\ell_n$ and $\ell_n^*$ are piecewise linear   functions,  both $\ell_n$ and $\ell_n^*$ converge to $\ell$, and  $ \lim_{n\to\infty} \overline{\|\nabla \ell_n\|_*}=\lim_{n\to\infty}  \overline{\|\nabla \ell_n^*\|_*}=L$.   Denote by $\overline{\ell}$ the worst-case value of the problem  \eqref{wcvar}.
By the result for piecewise linear functions, we have
\begin{align*}
\lefteqn{ \max\left\{\frac1N\sum_{i=1}^N  \max_{e: \|e\| =1} \ell_n(\widehat{\xi}_i+\epsilon e),~\frac1N\sum_{i=1}^N \ell_n(\widehat{\xi}_i)  +  \overline{\|\nabla \ell_n\|_*}  (1-\alpha)\epsilon  \right\} }\\
& \le   \overline{\ell} \le
\max\left\{\frac1N\sum_{i=1}^N  \max_{e: \|e\|=1} \ell_n^*(\widehat{\xi}_i+\epsilon e),~\frac1N\sum_{i=1}^N\ell_n^*(\widehat{\xi}_i)   + \overline{\|\nabla \ell_n^*\|_*} (1-\alpha)\epsilon  \right\}.
\end{align*}
Letting $n\to\infty$, we obtain that
$$
\overline{\ell} =
\max\left\{\frac1N\sum_{i=1}^N  \max_{e: \|e\|=1} \ell(\widehat{\xi}_i+\epsilon e),~\frac1N\sum_{i=1}^N \ell(\widehat{\xi}_i) + L(1-\alpha)\epsilon  \right\}.
$$
 Moreover, we have the following statements about the worst-case distribution.
\begin{itemize}
\item [(1)] If $\frac1N\sum_{i=1}^N  \max_{e: \|e\|=1} \ell(\widehat{\xi}_i+\epsilon e)\ge \frac1N\sum_{i=1}^N \ell(\widehat{\xi}_i) + L(1-\alpha)\epsilon$, then the optimal solution to the problem  \eqref{eq:gene-2}  is  $x^*=\epsilon$,
$$
 {\xi}_i^* = \widehat{\xi}_i + \epsilon e^*_i ~~{\rm with}~~e^*_i=\arg \max_{e: \|e\|=1} \ell(\widehat{\xi}_i+\epsilon e) ~~{\mathbb P}_i\textrm{-a.s.},~~i=1,\ldots,N.
$$
\item [(2)] If $\frac1N\sum_{i=1}^N \ell(\widehat{\xi}_i) +L(1-\alpha)\epsilon > \frac1N\sum_{i=1}^N  \max_{e: \|e\|=1} \ell(\widehat{\xi}_i+\epsilon e)$, then the optimal value can be approached by the following feasible solutions:  $x^*=0$,
    $$
      {\xi}_i^*=\widehat{\xi}_i, ~{\rm for}~i=2,\ldots,N,~~\mathbb P_1(\xi_1^*  = \widehat{\xi}_1  + e_n )= \frac{(1-\alpha)\epsilon}{\|e_n\| },~~\mathbb P_1(\xi_1^*  = \widehat{\xi}_1 )=1-\frac{(1-\alpha)\epsilon}{\|e_n\| },~~n\in\N.
    $$
    where $e_n\in\R^m $, $n\in\N$, satisfy   $\lim_{n\to\infty} \frac{\ell(\widehat{\xi}_1+ e_n)- \ell(\xi_1)}{ \|e_n\| }=L $.
\end{itemize}

We thus complete the proof.
\qed

~~

\noindent{\bf Proof of Corollary \ref{wcdis}.}
Since  $\frac1N\sum_{i=1}^N  \max_{e: \|e\|=1} \ell(\widehat{\xi}_i+\epsilon e) =\frac1N\sum_{i=1}^N \max_{\|e\|\le 1} \ell(\widehat{\xi}_i+\epsilon e) > \frac1N\sum_{i=1}^N \ell(\widehat{\xi}_i)$ and $L(1-\alpha)\epsilon$ in \eqref{wass_exp} is strictly decreasing in $\alpha\in [0,1]$, there must exist $\alpha \in (0,1)$ large enough such that
\begin{equation*}
\sup _{\mathbb{P} \in \mathbb{B}_{\varepsilon}^{{\rm wc}} (\widehat{\mathbb{P}}_{N} )}\mathbb{E}^{\mathbb{P}}[\ell(\xi)] = \frac1N\sum_{i=1}^N  \max_{e: \|e\|=1} \ell(\widehat{\xi}_i+\epsilon e)
 > \frac1N\sum_{i=1}^N \ell(\widehat{\xi}_i) +  L(1-\alpha)\epsilon = \sup _{\mathbb{P} \in \mathbb{B}^{\rm W}_{(1-\alpha)\varepsilon} (\widehat{\mathbb{P}}_{N} )} \mathbb{E}^{\mathbb{P}}[\ell(\xi)].
\end{equation*}
In the case,  the optimal value  \eqref{connection} is $\frac1N\sum_{i=1}^N  \max_{e: \|e\| =1} \ell(\widehat{\xi}_i+\epsilon e)$ which is attainable by the distribution $\mathbb P^* = \frac1N\sum_{i=1}^N \delta_{\widehat{\xi}_i+\epsilon e_i^*}$, where $e_i^*=\arg\max_{\{e\in\R^m : \|e\| =1\}}\ell(\widehat{\xi}_i+\epsilon e)$, $i=1,\ldots,N$. This completes the proof.
\qed

~~

The following result shows that the worst-case expectation problem formulated based on the Wasserstein ambiguity sets $\mathbb{B}_{\varepsilon} (\widehat{\mathbb{P}}_{N} )$ is not attainable.
\begin{proposition}\label{thm:7}
In the case where the loss function $\ell$ is a convex function satisfying $L:=\sup_{x\in\R^m}\|\partial \ell(x)\|_*$ $<\infty$,  and the sample satisfies $\|\partial \ell(\widehat{\xi}_i)\|_*<L$, $i=1,\ldots,N$, the worst-case expectation problem
\begin{equation} \label{wasser}
\sup _{\mathbb{P} \in \mathbb{B}^{\rm W}_{\varepsilon} (\widehat{\mathbb{P}}_{N} )} \mathbb{E}^{\mathbb{P}}[\ell(\xi)]
\end{equation}
is not attainable, i.e. the worst-case distribution does not exist.
\end{proposition}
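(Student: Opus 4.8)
The plan is to argue by contradiction, combining the known closed-form value of the worst-case expectation with a rigidity property of $\ell$ that is forced by the hypothesis $\|\partial\ell(\widehat\xi_i)\|_*<L$: if a worst-case distribution existed, then every bit of probability mass transported away from a sample $\widehat\xi_i$ would have to travel along a direction in which $\ell$ already grows at the maximal rate $L$ \emph{at} $\widehat\xi_i$, which is impossible because $\|\partial\ell(\widehat\xi_i)\|_*<L$.

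First I would record the value of the supremum. Since $\ell$ is convex with Lipschitz constant $L=\sup_{x}\|\partial\ell(x)\|_*<\infty$ (over $\Xi=\mathbb{R}^m$), the classical Wasserstein reformulation — equivalently \eqref{wass_exp} with $\alpha=0$ — gives
$$
v^*:=\sup_{\mathbb P\in\mathbb B^{\rm W}_\varepsilon(\widehat{\mathbb P}_N)}\mathbb E^{\mathbb P}[\ell(\xi)]=\frac1N\sum_{i=1}^N\ell(\widehat\xi_i)+L\varepsilon .
$$
Note that $\|\partial\ell(\widehat\xi_i)\|_*<L$ forces $L>0$, hence $v^*>\frac1N\sum_{i=1}^N\ell(\widehat\xi_i)$; in particular $\widehat{\mathbb P}_N$ is not a maximizer. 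Now suppose, for contradiction, that some $\mathbb P^*\in\mathbb B^{\rm W}_\varepsilon(\widehat{\mathbb P}_N)$ attains $v^*$. Pick an optimal transport coupling $\Pi^*\in\Pi(\widehat{\mathbb P}_N,\mathbb P^*)$ for the cost $\|\cdot-\cdot\|$ (it exists since the cost is lower semicontinuous) and disintegrate it as $\Pi^*=\frac1N\sum_{i=1}^N\delta_{\widehat\xi_i}\otimes\mathbb Q_i$, so that $\mathbb P^*=\frac1N\sum_{i=1}^N\mathbb Q_i$ and $\frac1N\sum_{i=1}^N\mathbb E^{\mathbb Q_i}[\|\widehat\xi_i-\xi\|]=d_{\rm W}(\widehat{\mathbb P}_N,\mathbb P^*)\le\varepsilon$. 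Applying the pointwise bound $\ell(\xi)\le\ell(\widehat\xi_i)+L\|\xi-\widehat\xi_i\|$ inside each $\mathbb Q_i$ and summing,
$$
v^*=\frac1N\sum_{i=1}^N\mathbb E^{\mathbb Q_i}[\ell(\xi)]\le\frac1N\sum_{i=1}^N\ell(\widehat\xi_i)+L\cdot\frac1N\sum_{i=1}^N\mathbb E^{\mathbb Q_i}[\|\xi-\widehat\xi_i\|]\le\frac1N\sum_{i=1}^N\ell(\widehat\xi_i)+L\varepsilon=v^*,
$$
so both inequalities are equalities; tightness of the first one, together with nonnegativity of the integrand, forces that for each $i$ the measure $\mathbb Q_i$ is concentrated on $E_i:=\{\xi\in\mathbb R^m:\ell(\xi)-\ell(\widehat\xi_i)=L\|\xi-\widehat\xi_i\|\}$.

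The crux is then the rigidity claim $E_i=\{\widehat\xi_i\}$. Fix $\xi\in E_i$ with $\xi\ne\widehat\xi_i$, set $r:=\|\xi-\widehat\xi_i\|>0$, $u:=(\xi-\widehat\xi_i)/r$, and consider the one-dimensional restriction $\psi(s):=\ell(\widehat\xi_i+su)$, $s\ge0$. Then $\psi$ is convex, $L$-Lipschitz (because $\|u\|=1$), and $\psi(r)-\psi(0)=Lr$. Its right derivative satisfies $\psi'_+\le L$ a.e.\ while $\int_0^r\psi'_+(s)\,\mathrm ds=\psi(r)-\psi(0)=Lr$, so $\psi'_+=L$ Lebesgue-a.e.\ on $[0,r]$; hence $\psi(s)=\psi(0)+Ls$ for all $s\in[0,r]$, and in particular the right derivative $\psi'_+(0)=L$. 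But $\psi'_+(0)$ equals the one-sided directional derivative of $\ell$ at $\widehat\xi_i$ along $u$, so $\psi'_+(0)=\max_{g\in\partial\ell(\widehat\xi_i)}\langle g,u\rangle\le\|\partial\ell(\widehat\xi_i)\|_*\,\|u\|=\|\partial\ell(\widehat\xi_i)\|_*<L$, a contradiction. Therefore $E_i=\{\widehat\xi_i\}$, whence $\mathbb Q_i=\delta_{\widehat\xi_i}$ for every $i$, i.e.\ $\mathbb P^*=\widehat{\mathbb P}_N$; but then $\mathbb E^{\mathbb P^*}[\ell(\xi)]=\frac1N\sum_{i=1}^N\ell(\widehat\xi_i)<v^*$, contradicting optimality of $\mathbb P^*$. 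This contradiction shows that \eqref{wasser} is not attainable.

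The step I expect to be the main obstacle is the rigidity claim $E_i=\{\widehat\xi_i\}$: this is where the hypothesis $\|\partial\ell(\widehat\xi_i)\|_*<L$ — imposed only at the sample point — must be converted into a statement about the \emph{global} Lipschitz bound, and it hinges on the one-dimensional reduction together with the elementary facts that a convex function is locally absolutely continuous and that a nonnegative integrand with zero integral vanishes almost everywhere, which lets the value $L$ of $\psi'_+$ propagate down to $s=0$. The remaining ingredients — the closed-form value $v^*$, existence and disintegration of the optimal coupling, and the sandwich estimate — are routine.
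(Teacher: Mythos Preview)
Your proof is correct and follows essentially the same approach as the paper's: both establish the closed-form value $v^*=\frac1N\sum_i\ell(\widehat\xi_i)+L\varepsilon$, disintegrate a putative optimal coupling into conditionals $\mathbb Q_i$, apply the Lipschitz sandwich, and use $\|\partial\ell(\widehat\xi_i)\|_*<L$ to rule out equality. Your one-dimensional rigidity argument showing $E_i=\{\widehat\xi_i\}$ in fact fills in a step the paper leaves implicit (it merely asserts that $\ell(\widehat\xi_i+\xi_i)=\ell(\widehat\xi_i)+L\|\xi_i\|$ a.s.\ ``can not happen simultaneously''), so your version is, if anything, the more complete of the two.
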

\begin{proof}  By Theorem  \ref{main_cvx},   the optimal value of the problem  \eqref{wasser} is $\frac1N\sum_{i=1}^N \ell(\widehat{\xi}_i) +L\epsilon$.
We can rewrite the problem \eqref{wasser} as
\begin{align}
\sup_{\mathbb Q_1,\ldots,\mathbb Q_N}~~ & \frac1N\sum_{i=1}^N \E^{\mathbb Q_i}[\ell(\widehat{\xi}_i +\xi_i)]\label{eq:gene-2-expectation} \\
{\rm subject~ to}~~&  \frac1N\sum_{i=1}^N \E^{\mathbb Q_i}[\|\xi_i\| ] = \epsilon,\label{eq:gene-2-expectation-1}
\end{align}
where $\mathbb Q_i$ is the distribution of $\xi_i$, $i=1,\ldots,N$.
By  the convexity of $\ell$ and the definition of $L$, we have for any   $\mathbb Q_1,\ldots,\mathbb Q_N$,  there exist random variables $y_1,\ldots,y_N$ such that
 $$
 \ell(\widehat{\xi}_i +\xi_i)
 = \ell(\widehat{\xi}_i) + \partial  \ell(y_i)^\top \xi_i  \le \ell(\widehat{\xi}_i) +\|\partial  \ell(y_i)\|_*\| \xi_i \| \le  
\ell(\widehat{\xi}_i) +L\| \xi_i \| ~~\mathbb Q_i~{\rm a.s.}.
 $$ 
 Therefore, for any $\mathbb Q_1,\ldots,\mathbb Q_N$ satisfying \eqref{eq:gene-2-expectation-1}, we have $$
 \E^{\mathbb Q_i}[\ell(\widehat{\xi}_i +\xi_i)]
 \le \ell(\widehat{\xi}_i) + L\E^{\mathbb Q_i}[ \| \xi_i \| ]
 $$ and the inequality reduces to equality only if   $\ell(\widehat{\xi}_i +\xi_i) = \ell(\widehat{\xi}_i) +L\| \xi_i \|$ a.s. $\mathbb Q_i$, $i=1,\ldots,N$.
Therefore, we have
\begin{align*}
 \frac1N\sum_{i=1}^N \E^{\mathbb Q_i}[\ell(\widehat{\xi}_i +\xi_i)]
 & \le \frac1N\sum_{i=1}^N \ell(\widehat{\xi}_i) +L\frac1N\sum_{i=1}^N \E^{\mathbb Q_i}[\| \xi_i \| ]= \frac1N\sum_{i=1}^N \ell(\widehat{\xi}_i) +L\epsilon,
\end{align*}
and the first inequality reduces to equality only if  $\ell(\widehat{\xi}_i +\xi_i) = \ell(\widehat{\xi}_i) +L\| \xi_i \|$ a.s. $\mathbb Q_i$, $i=1,\ldots,N$. By the assumption that the sample satisfies $\|\partial  \ell(\widehat{\xi}_i)\|_*<L$, $i=1,\ldots,N$, we have
$\ell(\widehat{\xi}_i +\xi_i) = \ell(\widehat{\xi}_i) +L\| \xi_i \|$ a.s. $\mathbb Q_i$, $i=1,\ldots,N$, can not happen simultaneously. Therefore, we have
 \begin{align*}
 \frac1N\sum_{i=1}^N \E^{\mathbb Q_i}[\ell(\widehat{\xi}_i +\xi_i)]
<\frac1N\sum_{i=1}^N \ell(\widehat{\xi}_i) +L\epsilon,
\end{align*}
 which means that   $(\mathbb Q_1,\ldots,\mathbb Q_N)$ is not the optimal solution and thus, any feasible solution  is not the optimal solution.
\end{proof}

~~

\noindent{\bf Proof of Corollary \ref{reduced1}.}
By assumption, we have that  $f_1$  is a non-increasing convex function on $\R$, $f_2$ is non-decreasing convex function on $\R$, and one can verify that the value of \eqref{worst_exp} with $ \ell(\xi)=f(x^\top \xi)$ is
 $$
  \max_{\|e\|=1}f(x^\top (\widehat{\xi}_i+\epsilon e)) = \max\left\{ f_1 (x^\top  \widehat{\xi}_i - \epsilon\|x\|_{*}), ~f_2 (x^\top  \widehat{\xi}_i +\epsilon\|x\|_{*} ) \right\},
 $$
 which is a convex function in $x\in\R^m$.
 Further, we have the value of \eqref{wass_exp} with $ \ell(\xi)=f(x^\top \xi)$ is
 $$ \sup _{\mathbb{P} \in \mathbb{B}^{\rm W}_{(1-\alpha)\varepsilon} (\widehat{\mathbb{P}}_{N} )} \mathbb{E}^{\mathbb{P}}[f(x^\top \xi)] = \frac1N\sum_{i=1}^N f(x^\top \widehat{\xi}_i) +  {\rm Lip}(f)\|x\|_{*}(1-\alpha)\epsilon.$$
Substituting them to \eqref{connection}, we obtain the result.
\qed

~~

\noindent{\bf Proof of Theorem \ref{ex-thm1}.}
By Theorem \ref{th:main-2}, we have the problem \eqref{wcexptile} is equivalent to
\begin{align}
\sup_{p_{ij},\xi_{ij}}~~
& \frac1N \sum_{i=1}^N \sum_{j=1}^K p_{ij}(a_j^\top \xi_{ij} + b_j)\label{eq:expectile-3} \\
{\rm subject~ to}~~
& e_\alpha^{\Pi}(\|\widehat{\xi} - \xi\|) \le \epsilon,  \nonumber\\
& \Pi((\widehat{\xi},\xi)=(\widehat{\xi}_i,\xi_{ij}))=p_{ij}\ge 0, ~ \xi_{ij}\in  \Xi,~\forall i,j,  \nonumber\\
& \sum_{j=1}^K p_{ij} =1,~~i=1,\ldots,N.  \nonumber
\end{align}
Note that $e_\alpha(X) \le \epsilon$ is equivalent to $e_\alpha(X-\epsilon) \le0$ as $e_\alpha$ satisfies translation invariance. By the monotonicity of $x\mapsto\alpha\E[(X-x)_+]-(1-\alpha)\E[(X-x)_-]$,  we have $ e_\alpha(X-\epsilon) \le0 $  if and only if $\alpha\E (X-\varepsilon)_+ \le  (1-\alpha)\E (\varepsilon-X)_+$, that is,
$ (2\alpha-1) \E (X-\varepsilon)_+ \le  (1-\alpha)(\varepsilon-\E X).$
We have the constraint $e_\alpha^{\Pi}(\|\widehat{\xi} - \xi\|) \le \epsilon$ is equivalent to
$$   \frac1N \sum_{i=1}^{N} \sum_{j=1}^K p_{ij} (\|\xi_{ij} -\widehat{\xi}_i\|-\varepsilon)_+
\le  \frac{1-\alpha}{2\alpha - 1}\left[\varepsilon- \frac1N \sum_{i=1}^{N} \sum_{j=1}^Kp_{ij}\|\xi_{ij}-\widehat{\xi}_i\|\right].$$
Therefore, we have the problem \eqref{eq:expectile-3} is equivalent to
\begin{align}
\sup_{p_{ij},\xi_{ij}}~~ &\frac1N\sum_{i=1}^N\sum_{j=1}^K p_{ij} (a_j^\top \xi_{ij} +b_j) \label{eq:expectile-1} \\
{\rm subject~ to}~~&  \frac1N \sum_{i=1}^{N} \sum_{j=1}^K p_{ij} (\|\xi_{ij} -\widehat{\xi}_i\|-\varepsilon)_+
\le  \frac{1-\alpha}{2\alpha - 1}\left[\varepsilon- \frac1N \sum_{i=1}^{N} \sum_{j=1}^Kp_{ij}\|\xi_{ij}-\widehat{\xi}_i\|\right], \nonumber\\
& \sum_{j=1}^Kp_{ij} =1,~i=1,\ldots,N, ~ ~ \xi_{ij}\in \Xi,~\forall i,j.\nonumber
\end{align}
Substituting 
$y_{ij} =p_{ij} (\xi_{ij}-\widehat{\xi}_i)$ for $i=1,\ldots,N$, $j=1,\ldots,K$, into the  problem \eqref{eq:expectile-1} yields  \eqref{eq:expectile-2} by standard computation.

To derive the alternative minimization formulation, we begin by dualizing the first constraint in  \eqref{eq:expectile-2}
\begin{eqnarray} \label{dual_expectile}
\inf_{\lambda\geq0}\left\{ \begin{array}{ll}
\sup_{p_{ij},y_{ij}} & \frac{1}{N}\sum_{i=1}^{N}\sum_{j=1}^{K}(a_{j}^{\top}y_{ij}+\bar{z}_{ij}p_{ij})+\lambda\left(\beta\varepsilon-\frac{1}{N}\sum_{i=1}^{N}\sum_{j=1}^{K} \left( \left(\| y_{ij}\| -\varepsilon p_{ij}\right)_{+}+\beta\| y_{ij}\| \right) \right)\\
{\rm subject\;to} & \sum_{j=1}^{K}p_{ij}=1,\;i=1,...,N,\;p_{ij}\geq0,\;\widehat{\xi}_{i}+\frac{y_{ij}}{p_{ij}}\in\Xi,\;\;\forall i,j.
\end{array}\right\}
\end{eqnarray}
where $\beta=\frac{1-\alpha}{2\alpha-1}$ and $\bar{z}_{ij} = (a_j^\top \widehat{\xi}_i+b_j)$. Strong duality holds, i.e. \eqref{dual_expectile} = \eqref{eq:expectile-2},
because \eqref{eq:expectile-2} is a convex optimization problem that satisfies the slater condition when $\varepsilon > 0$. Strong duality holds also for $\varepsilon  = 0$, because both the primal and the dual reduces to the same problem. 
Replacing $y_{ij}$ with $p_{ij}(\xi_{ij}-\widehat{\xi}_{i})$, we have
\begin{align*}
 & \inf_{\lambda\geq0}\lambda\beta\varepsilon+\frac{1}{N}\sum_{i=1}^{N}\left\{ \begin{array}{ll}
\sup_{p_{ij},\xi_{ij}} & \sum_{j=1}^{K}p_{ij}\left[\left(a_{j}^{\top}\xi_{ij}+b_{j}\right)-\lambda\left(\left(\| \xi_{ij}-\widehat{\xi}_{i}\| -\varepsilon\right)_{+}+\beta\| \xi_{ij}-\widehat{\xi}_{i}\| \right)\right]\\
{\rm subject\;to} & \sum_{j=1}^{K}p_{ij}=1,\;p_{ij}\geq0,\;\xi_{ij}\in\Xi,\;\;\forall j.
\end{array}\right\} \\
= & \inf_{\lambda\geq0}\lambda\beta\varepsilon+\frac{1}{N}\sum_{i=1}^{N}\left\{ \begin{array}{ll}
\sup_{\xi_{ij}} & \max_{j\in\{1,...,K\}}\left[\left(a_{j}^{\top}\xi_{ij}+b_{j}\right)-\lambda\left(\left(\| \xi_{ij}-\widehat{\xi}_{i}\| -\varepsilon\right)_{+}+\beta\| \xi_{ij}-\widehat{\xi}_{i}\| \right)\right]\\
{\rm subject\;to} & \xi_{ij}\in\Xi,\;\;\forall j.
\end{array}\right\} \\
= & \left\{ \begin{array}{ll}
\inf_{\lambda\geq0,s_{i}} & \lambda\beta\varepsilon+\frac{1}{N}\sum_{i=1}^{N}s_{i}\\
{\rm subject\;to} & \max_{j\in\{1,...,K\}}\left[\sup_{\xi\in\Xi}\left(a_{j}^{\top}\xi+b_{j}\right)-\lambda\left(\left(\| \xi-\widehat{\xi}_{i}\| -\varepsilon\right)_{+}+\beta\| \xi-\widehat{\xi}_{i}\| \right)\right]\le s_{i},\;i=1,...,N
\end{array}\right\}.
\end{align*}
The constraint is equivalent to
\begin{equation} \label{qqq}
\sup_{\xi\in\Xi}\left(a_{j}^{\top}\xi+b_{j}\right)-\lambda\left(\left(\| \xi-\widehat{\xi}_{i}\| -\varepsilon\right)_{+}+\beta\| \xi-\widehat{\xi}_{i}\| \right)\leq s_{i},\;i=1,...,N,\;j=1,...,K.
\end{equation}
Let $f(\xi)=f_{1}(\xi)+f_{2}(\xi)$, where $f_{1}(\xi):=\lambda \left(\| \xi-\widehat{\xi}_{i}\| -\varepsilon\right)_{+}$ and
$f_{2}(\xi)=\lambda \beta\| \xi-\widehat{\xi}_{i}\| $.
Using Fenchel duality, we can substitute the left-hand-side of \eqref{qqq} by its dual and arrive at
\begin{equation} \label{dd2}
\exists u_{ij}: \; \; [-\ell_{j}+\chi_{\Xi}]^{*}(u_{ij})+f^{*}(-u_{ij})\leq s_{i},\;i=1,...,N,\;j=1,...,K,
\end{equation}
where $\ell_j (\xi) = a_j^\top \xi + b_j$ and
\[
[-\ell_{j}+\chi_{\Xi}]^{*}(u_{ij})=b_{j}+\sigma_{\Xi}(u_{ij}+a_{j}). 
\]
To derive the conjugate $f^*$, we derive first the conjugate $f_1^*$ and $f_2^*$. Note that $f_{1}(\xi)=h\left(g(\xi)\right)$ where $h(u)=\max(0,u)$ is nondecreasing, and $g(\xi)=\lambda(\| \xi-\widehat{\xi}_{i}\| -\varepsilon$). Assuming first that $u_1 \neq 0$ and $\lambda >0$, we have
\begin{align}
f_{1}^{*}(u_{1})= & \inf_{z\geq0}\left\{ \left(zg\right)^{*}(u_{1})+h^{*}(z)\right\} \label{eq01}\\
= & \inf_{0\leq z\leq1}(zg)^{*}(u_{1}) \label{eq02}\\
= & \inf_{0\leq z\leq1}\sup_{\xi}\;u_{1}^{\top}\xi-z\lambda(\| \xi-\widehat{\xi}_{i}\| -\varepsilon) \nonumber\\
= & \inf_{0\leq z\leq1}\sup_{y}\;u_{1}^{\top}(y+\widehat{\xi}_{i})-z\lambda(\| y\| -\varepsilon)  \label{eq04}\\
= & u_{1}^{\top}\widehat{\xi}_{i}+\inf_{0 < z\leq1}z\lambda \left\{\sup_{y}\left(\frac{1}{z\lambda}u_{1}\right)^{\top}y-\| y\| \right\}+ z\lambda\varepsilon \label{eq05}\\
= & u_{1}^{\top}\widehat{\xi}_{i}+\inf_{0< z\leq1,\;z\lambda\geq\| u_{1}\| _{*}}z\lambda\varepsilon \label{eq06} \\
= & \begin{cases}
u_{1}^{\top}\widehat{\xi}_{i}+\| u_{1}\| _{*}\varepsilon, &\| u_{1}\| _{*}\leq\lambda,\\
\infty, &{\rm o.w.},
\end{cases} \label{eq07}
\end{align}
where \eqref{eq01} follows the property of conjugate functions, \eqref{eq02} follows
\[
h^{*}(z)=\begin{cases}
0, & 0\leq z\leq1,\\
\infty, & {\rm o.w.},
\end{cases}
\]
\eqref{eq05} invokes the observation that $z\lambda > 0$ can be assumed without the loss of generality because $\lambda > 0$ and $z=0$ cannot be optimal in \eqref{eq04} (in which case \eqref{eq04} equals to $\infty$ unless $u_1=0$),
 \eqref{eq06} follows the conjugate of the norm $\| y\| $, and finally \eqref{eq07} is because the optimal $z$ must satisfy $z \lambda = \| u_1\| _*$ for any $\| u_1\| _* \leq \lambda$ and does not exist otherwise. From \eqref{eq04}, one can see that in the case $u_1=0$, we have $f_1^*(0) = \inf_{0\leq z \leq 1} \sup_{y} -z\lambda
 (\| y\| -\varepsilon) = \inf_{0\leq z \leq 1} z\lambda \varepsilon = 0$, which is consistent with \eqref{eq07}.

We also have
\[
f_{2}^{*}(u_{2})=
\begin{cases}
u_{2}^{\top}\widehat{\xi}_{i}, &\| u_{2}\| _{*}\leq\lambda\beta,\\
\infty, &{\rm o.w.},
\end{cases}
\]

We thus have the conjugate $f^*(-u)$
\begin{align*}
f^{*}(-u) & =\inf_{u_{1}+u_{2}=-u}f_{1}^{*}(u_{1})+f_{2}^{*}(u_{2})\\
 & =\left\{ \begin{array}{ll}
\inf_{u_{1},u_{2}} & u_{1}^{\top}\widehat{\xi}_{i}+\| u_{1}\| _{*}\varepsilon+u_{2}^{\top}\widehat{\xi}_{i}\\
 & u_{1}+u_{2}=-u,\;\| u_{1}\| _{*}\leq\lambda,\;\| u_{2}\| _{*}\leq\lambda\beta
\end{array}\right\} \\
 & =\left\{ \begin{array}{ll}
\inf_{u_{1}} & \;(-u)^{\top}\widehat{\xi}_{i}+\| u_{1}\| _{*} \varepsilon\\
 & \| u_{1}\| _{*}\leq\lambda,\;\| u+u_{1}\| _{*}\leq\lambda\beta
\end{array}\right\} .
\end{align*}

Thus, the constraint \eqref{dd2} can be equivalently formulated as
\begin{align*}
 & \exists u_{ij} : \begin{array}{l}
b_{j}+\sigma_{\Xi}(u_{ij}+a_{j})+\left\{ \begin{array}{ll}
\inf_{u_{1}} & (-u_{ij})^{\top}\widehat{\xi}_{i}+\| u_{1}\| _{*}\varepsilon\\
 & \| u_{1}\| _{*}\leq\lambda,\;\| u_{ij}+u_{1}\| _{*}\leq\lambda\beta
\end{array}\right\} \end{array} \leq s_{i},\;i=1,...,N,\;j=1,...,K.\\
\Leftrightarrow & \exists u_{ij},v_{ij} : \;  \begin{array}{ll}
b_{j}+\sigma_{\Xi}(u_{ij}+a_{j})+(-u_{ij})^{\top}\widehat{\xi}_{i}+\| v_{ij}\| _{*}\varepsilon\leq s_{i}, & \\
\| v_{ij}\| _{*}\leq\lambda,\;\| u_{ij}+v_{ij}\| _{*}\leq\lambda\beta, &
\end{array} \;i=1,...,N,\;j=1,...,K.
\end{align*}
We thus arrive at the final formulation. Note that in the case $\lambda = 0$, \eqref{qqq} can be directly reduced to $ [-\ell_{j}+\chi_{\Xi}]^{*}(0)\leq s_{i},\;i=1,...,N,\;j=1,...,K$ by the property of convex conjugate, which is consistent with the final formulation.
\qed

~~

\noindent{\bf Proof of Theorem \ref{main_ee}.}
   We first show \eqref{eq-expectile-17} for piecewise linear function, that is, $\ell(x)=\max_{j=1,\ldots,K}\{ a_j^\top x + b_j\}$. Without loss of generality, assume that $\|a_1\|_*\le \cdots\le \|a_K\|_*$ and Denote by $z_{ij}:=\ell_j(\widehat{\xi}_i)=a_j^\top \widehat{\xi}_i+b_j,~\forall~i,j$.
By Theorem \ref{ex-thm1} for $\Xi=\R^m$, letting $x_{ij}=\|y_{ij}\|$ $\forall i,j$,  we have the problem \eqref{eq:expectile-2} is equivalent to
\begin{align}
\sup_{p_{ij},x_{ij}\in\R_+}~~ &\frac1N\sum_{i=1}^N \sum_{j=1}^K  p_{ij}(\|a_j\|_*x_{ij} +z_{ij}) \label{eq:expectile-5} \\
{\rm subject~ to}~~&  \alpha \sum_{i=1}^{N}\sum_{j=1}^Kp_{ij}( x_{ij}-\varepsilon)_+ \le (1-\alpha) \sum_{i=1}^{N}\sum_{j=1}^K p_{ij}(x_{ij}-\varepsilon)_- ,\nonumber\\
& \sum_{j=1}^Kp_{ij} =1,~~i=1,\ldots,N, ~~p_{ij}\ge0,~\forall i,j.\nonumber
\end{align}
For each  $i=1,\ldots,N$, we have the following two observations.
\begin{itemize}\item[(a)]
 If $x_{ij}<\epsilon $ and $x_{ik}< \epsilon$ for some $j<k$, then taking $x_{ij}^*  = x_{ij}  - \delta \ge0 $ and $x_{ik}^* = x_{ik} +\delta p_{ij}/p_{ik}\le \epsilon  $ for any $\delta>0$ yields a feasible solution and a larger value of the objective function.
\item [(b)]If  $x_{ij} \ge \epsilon  $  and $x_{ik} \ge \epsilon $ for some $j<k$, then taking $x_{ij}^*  = \epsilon   $ and $x_{ik}^*= x_{ik} + (x_{ij} - \epsilon )p_{ij}/p_{ik}$ yields a feasible solution and a larger value of the objective function.
\end{itemize}
 Combining the above two observations, for each $i=1,\ldots,N$, we have either one of the following two cases holds:
 \begin{itemize}\item[(i)]
    $x_{ij}\le  \epsilon  $ for all $j=1,\ldots,K.$ In this case, there exists $j_i$ such that $x_{i1}=\ldots=x_{ij_i}=0\le x_{ij_i+1}\le x_{ij_i+2}=\cdots=x_{iK}=\epsilon$.

\item [(ii)] $x_{iK} >  \epsilon  $  and  $x_{ij}\le  \epsilon  $ for all $j=1,\ldots,K-1.$ In this case, there exists $j_i$ such that $x_{i1}=\ldots=x_{ij_i}=0\le x_{ij_i+1}\le x_{ij_i+2}=\cdots=x_{iK-1}=\epsilon$.
We then have the following two cases. \begin{itemize} \item [(ii.a)] If $\alpha \|a_K\|\ge  (1-\alpha) \|a_{j_i+1}\|$, then letting $x_{iK}^* = x_{iK} +  (1-\alpha) \delta p_{ij_i+1}$ and $x_{i{j_i+1}}^* = x_{i {j_i+1}} - \alpha \delta p_{iK}\ge 0$ for any $\delta>0$  yields a feasible solution and a larger value of the objective function.
\item [(ii.b)] If $\alpha \|a_K\|<(1-\alpha) \|a_{j_i+1}\|$, then letting $x_{iK}^* = x_{iK} -  (1-\alpha) \delta p_{ij_i+1}$ and $x_{i{j_i+1}}^* = x_{i {j_i+1}} + \alpha \delta p_{iK}\le \epsilon$ for any $\delta>0$  yields a feasible solution and a larger value of the objective function.
    \end{itemize}
    Therefore, combining the above two cases (ii.a) and (ii.b), we have that  the case (ii)  reduces to  either case (i) or $x_{ij_i+1}=0$.
    \end{itemize}
We can rephrase the above two cases (i) and (ii) as follows: For each $i=1,\ldots,N$, we have either one of the following two cases holds.
\begin{itemize}\item[(i)]
    $x_{ij}\le  \epsilon  $ for all $j=1,\ldots,K.$ In this case, there exists $j_i$ such that $x_{i1}=\ldots=x_{ij_i}=0\le x_{ij_i+1}\le x_{ij_i+2}=\cdots=x_{iK}=\epsilon$.

\item [(ii)] $x_{iK} >  \epsilon  $  and  $x_{ij}\le  \epsilon  $ for all $j=1,\ldots,K-1.$ In this case, there exists $j_i$ such that $x_{i1}=\ldots=x_{ij_i}=0\le x_{ij_i+1}=\cdots=x_{iK-1}=\epsilon$.
    \end{itemize}
Moreover, note that for  any feasible $x_{ij}$, $i=1,\ldots,N$, $j=1,\ldots,K,$ 
define
 $$x_{ij}^*=\begin{cases} \frac{\sum_{i=1}^N p_{ij}x_{ij} 1_{\{x_{ij}\le \epsilon \}} }{\sum_{i=1}^N p_{ij} 1_{\{x_{ij}\le \epsilon \}}} =:y_j~~ &{\rm if} ~x_{ij} \le \epsilon,\\
   \frac{\sum_{i=1}^N p_{ij}x_{ij} 1_{\{x_{ij}>\epsilon\} } }{\sum_{i=1}^Np_{ij} 1_{\{x_{ij}>\epsilon \}}}=:z_j ~~ &{\rm if} ~x_{ij} > \epsilon.\end{cases}
  $$
 We have
 $$
     \sum_{i=1}^{N}\sum_{j=1}^Kp_{ij}(x_{ij}^*-\varepsilon)_+  =  \sum_{i=1}^{N}\sum_{j=1}^Kp_{ij}( x_{ij}-\varepsilon)_+ ~~{\rm and}~~  \sum_{i=1}^{N}\sum_{j=1}^K p_{ij}(x_{ij}^*-\varepsilon)_-=\sum_{i=1}^{N}\sum_{j=1}^K p_{ij}(x_{ij}-\varepsilon)_-,
 $$
 and
 $$
   \sum_{i=1}^N \sum_{j=1}^K  p_{ij}(\|a_j\|_*x_{ij}^* +z_{ij})=\sum_{i=1}^N \sum_{j=1}^K  p_{ij}(\|a_j\|_*x_{ij} +z_{ij}).
 $$
  That is,  $p_{ij}, x_{ij}^*$'s satisfy the constraint and the objective function remains unchanged. 
This combined with the above observation, i.e., $x_{ij}\le \epsilon$ for all $j<K$,
  we have the optimal solution $x_{ij}$ must satisfy:   
 $x_{iK}  \ge  \epsilon  $ for  all $i=1,\ldots,N$,   $x_{ij}\le  \epsilon  $ for all $i=1,\ldots,N$ and $j=1,\ldots,K-1,$ and moreover, there exists $k$ which is independent of $i$ such that $x_{i1}=\cdots=x_{ik}=0\le x_{ik+1}=\cdots=x_{K-1}=\epsilon$.
Therefore, the problem \eqref{eq:expectile-5} is equivalent to 
\begin{align}
\sup_{p_{ij},x_{ij} \in\R_+,k}~~ &\frac1N  \sum_{i=1}^N \left( \sum_{j=k+1}^{K-1} p_{ij} \|a_{j}\|_*  \epsilon  +  p_{iK} \|a_K\|_* x_{iK} + \sum_{j=1}^K p_{ij} z_{ij} \right) \label{eq-expectile-11} \\
{\rm subject~ to}~~&  \alpha   \sum_{i=1}^{N} p_{iK} (x_{iK}-\varepsilon)   \le (1-\alpha) \sum_{i=1}^{N} \sum_{j=1}^{k}  \varepsilon p_{ij} ,\label{eq-expectile-111} \\
& \sum_{j=1}^Kp_{ij} =1,~~i=1,\ldots,N, ~~k=1,\ldots,K.\label{eq-expectile-112}
\end{align}
Since the objective function is increasing in $x_{iK}$,  it suffices to consider the case that the inequality of \eqref{eq-expectile-111} is an equality.
Substituting $\sum_{i=1}^{N} p_{iK} x_{iK} = \sum_{i=1}^{N} \varepsilon p_{iK}+ \beta \sum_{i=1}^{N} \sum_{j=1}^{k}  \varepsilon p_{ij}$ into the objective function, we have the problem (\ref{eq-expectile-11}) is equivalent to
\begin{align}\label{eq-expectile-14}
\sup_{p_{ij}\in\R_+,~k}~~ &\frac1N  \sum_{i=1}^N \left[ \sum_{j=1}^{k} \beta p_{ij} \|a_{K}\|_* \varepsilon +\sum_{j=k+1}^{K-1} p_{ij} \|a_{j}\|_* \epsilon   +  p_{iK} \|a_{K}\|_*\varepsilon + \sum_{j=1}^K p_{ij} z_{ij}  \right] ~~{\rm s.t.}~~  \eqref{eq-expectile-112}.
\end{align}
Note that for any feasible $p_{ij}$, the optimal $k$ must be the largest index of $j$ such that $ \|a_j\|_*\le \beta\|a_K\|_* $, i.e.,  $$k^*=\max\{j=1,\ldots,K-1: \|a_j\|_*\le \beta\|a_K\|_* \}.$$ Hence, the problem \eqref{eq-expectile-14} is equivalent to
\begin{align}
\sup_{p_{ij}\in\R_+}~~ &\frac1N  \sum_{i=1}^N \left[\sum_{j=1}^{k^*} p_{ij}\big( \beta \|a_{K}\|_* \varepsilon  +z_{ij}\big) +\sum_{j=k^*+1}^{K}p_{ij}( \|a_{j}\|_* \epsilon+z_{ij})   \right] ~~
{\rm s.t.} ~\sum_{j=1}^K p_{ij} =1,~\forall~i,\nonumber
\end{align}
whose optimal value equals to
\begin{align}\label{eq-expectile-15}
 \frac1N  \sum_{i=1}^N \max\left\{ \big( \beta \|a_{K}\|_* \varepsilon  +z_{ij}\big)_{j=1}^{k^*},~ ( \|a_{j}\|_* \epsilon+z_{ij}) _{j=k^*+1}^{K}  \right\}   =: \frac1N  \sum_{i=1}^N   {\mathcal E}_i.
\end{align}
Note that  $\|a_j\|_*\varepsilon  +z_{ij}\le \beta \|a_{K}\|_* \varepsilon  +z_{ij}$, $  j=1,\ldots,k^*$,
and thus,
\begin{align*}
 {\mathcal E}_i & \ge \max_{j=1,\ldots,K} \{\|a_{j}\|_* \epsilon+z_{ij}\} = \max_{\|e\|=1} \ell(\widehat{\xi}_i+\epsilon e).
\end{align*}
Also, note that
$\|a_j\|_*\varepsilon  +z_{ij}> \beta \|a_{K}\|_* \varepsilon  +z_{ij}$, $  j=k^*+1,\ldots,K,$
and thus,
\begin{align*}
 {\mathcal E}_i\ge  \beta \|a_{K}\|_* \varepsilon + \max_{j=1,\ldots,K} \{\|a_{j}\|_* \epsilon+z_{ij}\} =  \beta \|a_{K}\|_* \varepsilon+\ell(\widehat{\xi}_i) .
\end{align*}
Therefore,  we have
\begin{align*}
\eqref{eq-expectile-15} \ge  \frac1N  \sum_{i=1}^N \max\left\{ \ell(\widehat{\xi}_i)+ \beta \|a_{K}\|_* \varepsilon ,~\max_{\|e\|=1} \ell(\widehat{\xi}_i+\epsilon e)\right\}.
\end{align*}
On the other hand, obviously we have  \begin{align*}
 &\max_{j=1,\ldots,k^*}\big( \beta \|a_{K}\|_* \varepsilon  +z_{ij}\big)   \le \beta \|a_{K}\|_* \varepsilon+\ell(\widehat{\xi}_i)
~~{\rm and}~~\max_{j=k^*+1,\ldots,K}\big( \|a_{j}\|_* \epsilon+z_{ij} \big) \le   \max_{\|e\|=1} \ell(\widehat{\xi}_i+\epsilon e).\end{align*} Hence, we have
\begin{align*}
\eqref{eq-expectile-15}= \frac1N  \sum_{i=1}^N \max\left\{ \ell(\widehat{\xi}_i)+ \beta \|a_{K}\|_* \varepsilon ,~\max_{\|e\|=1} \ell(\widehat{\xi}_i+\epsilon e)\right\}.
\end{align*}
Therefore, we have shown \eqref{eq-expectile-17} for  piecewise linear functions.
For general convex loss function $\ell$, by similar arguments in  Theorem \ref{main_cvx}, we have the optimal value is \eqref{eq-expectile-17}. \qed

~~

\noindent{\bf Proof of Corollary \ref{wcdis-expectile}.}
Since $\beta=(1-\alpha)/\alpha\to 0$ as $\alpha\to 1$ and $ \ell(\widehat{\xi}_i) < \max_{\|e\|=1} \ell(\widehat{\xi}_i+\epsilon e)$ for all $i=1,\ldots,N$, there exists $\alpha\in (1/2,1)$ large enough such that $ \ell(\widehat{\xi}_i)+ \beta L \varepsilon < \max_{\|e\|=1} \ell(\widehat{\xi}_i+\epsilon e)$ for $i=1,\ldots,N$.
In this case,  \eqref{eq-expectile-17} in Theorem \ref{main_ee} reduces to
\begin{align*}
 \frac1N  \sum_{i=1}^N \max_{\|e\|=1} \ell(\widehat{\xi}_i+\epsilon e).
\end{align*}
One can verify that this optimal value  is attained by $$
\mathbb P_{\epsilon}=\frac1N \sum_{i =1}^N \delta_{ \widehat{\xi}_i+ \epsilon e_i }~~{\rm with}~~ e_i=\arg\max_{\|e\|=1} \ell(\widehat{\xi}_i+\epsilon e),~~i=1,\ldots,N.
$$
 We thus complete the proof.
\qed

\noindent{\bf Proof of Corollary \ref{reduced2}.} The proof is similar to that of Corollary \ref{reduced1} and thus omitted. \qed

\small

\end{document}